\definecolor{mygreen}{HTML}{43a047}
\newcommand{\Om}{\Omega}
\newcommand{\rhob}{\rho_{\textup{b}}}
\newcommand{\rhoa}{\rho_{\textup{a}}}
\newcommand{\Ca}{C_{\textup{a}}}
\newcommand{\Cb}{C_{\textup{b}}}
\newcommand{\kappaa}{\kappa_{\textup{a}}}
\newcommand{\Thetaa}{\Theta_{\textup{a}}}
\newcommand{\ddt}{\frac{\textup{d}}{\textup{d}t}}
\newcommand{\ds}{\, \textup{d} s }
\newcommand{\dx}{\, \textup{d} x}
\newcommand{\intO}{\int_{\Omega}}
\newcommand{\R}{\mathbb{R}} 
\newtheorem{theorem}{Theorem}
\newtheorem{lemma}{Lemma}
\newtheorem{proposition}{Proposition}
\newtheorem{assumption}{Assumption}
\numberwithin{lemma}{section}
\numberwithin{proposition}{section}
\numberwithin{theorem}{section}
\numberwithin{equation}{section}
\newcommand{\leqnomode}{\tagsleft@true}
\newcommand{\reqnomode}{\tagsleft@false}
\definecolor{grey}{rgb}{0.5,0.5,0.5}
\title[JMGT--Pennes system]{Local well-posedness of a coupled Jordan--Moore--Gibson--Thompson--Pennes  model of nonlinear ultrasonic heating}      
\subjclass[2010]{35L70, 35K05}      
\keywords{ultrasonic heating, JMGT equation,   nonlinear acoustics, Pennes bioheat equation,  HIFU}  
\author[I. Benabbas]{Imen Benabbas$^\dagger$}
\thanks{$^\dagger$AMNEDP Laboratory, Faculty of Mathematics,
	USTHB (\href{ibenabbas@usthb.dz}{ibenabbas@usthb.dz})}
\author[B. Said-Houari]{Belkacem Said-Houari$^\ddag$}
\thanks{$^\ddag$Department of Mathematics, College of Sciences, University of
	Sharjah, P. O. Box: 27272, Sharjah, United Arab Emirates    (\href{bhouari@sharjah.ac.ae}{bhouari@sharjah.ac.ae})}
\begin{document}
	\vspace*{8mm}
	\begin{abstract}
In this work, we investigate a mathematical model of nonlinear ultrasonic heating based on the Jordan--Moore--Gibson--Thompson equation (JMGT) with temperature-dependent medium parameters coupled to the semilinear Pennes equation for the bioheat transfer. The equations are coupled via the temperature in the coefficients of the JMGT equation and via a nonlinear source term within the Pennes equation, which models the absorption of acoustic energy by the 
 surrounding tissue.  
Using the energy method together with a fixed point argument, we prove that our model is locally well-posed, provided that the initial data are regular, small in a lower topology and the final time is short enough.      
					\end{abstract}   
	\vspace*{-7mm}  
	\maketitle                
     
\section{Introduction}    
The study of nonlinear wave propagation has garnered a lot of attention in recent years, notably due to recent applications in ultrasound imaging such as High-Intensity Focused Ultrasound (HIFU). 
%%%%%%
 For instance, in medical procedures, focused ultrasound is used to generate localized heating that can destroy the targeted region. Indeed, this technique is proving its success in the treatment of both benign and malignant tumors \cite{ hahn2018high, li2010noninvasive, maloney2015emerging}. This emerging method relies heavily on the ability to model accurately the nonlinear propagation of sound pulses in thermo-viscous elastic media.  
%%%%%%%%%%
 
In this paper, we consider the coupled system of  the Jordan--Moore--Gibson--Thompson equation  and the  Pennes bioheat equation (JMGT--Pennes  model):
\begin{subequations}\label{coupled_problem_JMGT_Pennes} 
\begin{equation} 
\left\{ \label{coupled_problem_JMGT_Pennes_1}
\begin{aligned}
&\tau p_{ttt}+p_{tt}-c^2(\bar{\Theta})\Delta p - \delta(\bar{\Theta}) \Delta p_t = K(\bar{\Theta})\left(p^2\right)_{tt}, \quad &\text{in} \ \Omega \times (0,T),\\
&\rhoa \Ca\bar{\Theta}_t-\kappaa \Delta \bar{\Theta} + \rhob \Cb W(\bar{\Theta}-\Thetaa) = \mathcal{Q}(\bar{\Theta}, p_t), \quad &\text{in} \ \Omega \times (0,T).
\end{aligned}
\right.
\end{equation}
The first equation in \eqref{coupled_problem_JMGT_Pennes_1} is the Jordan--Moore--Gibson--Thompson equation of nonlinear acoustics with  temperature-dependent medium parameters, and the second equation is the Pennes bioheat equation \cite{pennes1948analysis}. The two equations are coupled via the temperature in the sound coefficients of the JMGT equation and via the function $\mathcal{Q}$ in the  Pennes equation modeling the acoustic energy being absorbed by the surrounding tissue. 
 Here $p$ and $\bar{\Theta}$ denote the acoustic pressure and the temperature fluctuations, respectively,  $c(\bar{\Theta})$ is the speed of sound, which is allowed to depend on the temperature,  and 
 \begin{equation} \label{delta_definition}   
\delta(\bar{\Theta})=\beta(\bar{\Theta})+\tau c^2(\bar{\Theta})
\end{equation}
where $\beta$ is a strictly positive function which represents the sound diffusivity and it is given by 
\begin{equation} \label{beta_definition}
\beta(\bar{\Theta})= 2\frac{\alpha c^3(\bar{\Theta})}{\omega^2},
\end{equation}
where $\omega$ stands for the angular frequency and $\alpha$ for  the acoustic amplitude absorption coefficient \cite{connor2002bio}.
The parameter   
 $\tau>0$ denotes  a positive constant accounting for the relaxation time, and   the function $K(\bar{\Theta})$  is also allowed to depend on $\bar{\Theta}$ and it is given by
$$K(\bar{\Theta})=\frac{\beta_{\text{acous}}}{\rho c^2(\bar{\Theta })},$$
where $\rho$ is the mass density and $\beta_{\text{acous}}$ is the parameter of nonlinearity. 
 The source term in the second equation $\mathcal{Q}(\bar{\Theta}, p_t)$ represents the acoustic energy absorbed by the tissue. 
We assume that  the function $\mathcal{Q}$ has the form \cite{connor2002bio}
\begin{equation} \label{Q_definition}
\mathcal{Q}(\bar{\Theta},p_t)=q(\bar{\Theta}) (p_t)^2 \quad \text{with} \quad  q(\bar{\Theta})=\frac{2 \beta(\bar{\Theta})}{\rhoa c^4(\bar{\Theta})}. 
\end{equation}
The medium parameters $\rhoa, \Ca$ and $\kappaa$ stand, respectively, for the ambient density, the ambient heat capacity and thermal conductivity of the tissue. The additional term $\rhob \Cb W(\bar{\Theta}-\Thetaa)$ accounts for the heat loss due to blood circulation, with  $\rhob, \Cb$ being the density and specific heat capacity of blood, and $W$ expressing the tissue's volumetric perfusion rate measured in milliliters of blood per milliliter of tissue per second.

We supplement    \eqref{coupled_problem_JMGT_Pennes_1} with the initial conditions 
\begin{eqnarray}\label{Initial_Condi}
p|_{t=0}=p_0,\quad p_t|_{t=0}=p_1,\quad p_{tt}|_{t=0}=p_2,\quad \bar{\Theta}|_{t=0}=\bar{\Theta}_0.
\end{eqnarray}
and Dirichlet boundary conditions  
\begin{eqnarray} \label{coupled_problem_BC}
p\vert_{\partial \Om}=0, \qquad \bar{\Theta}\vert_{\partial \Om}= \Thetaa, 
\end{eqnarray}
\end{subequations} 
with $\Thetaa$ denoting the ambient temperature, that is typically taken in the human body to be $37^\circ C$; see \cite{connor2002bio} for more details.   

When $c$, $\delta$ and $K$ are constants, 
the first equation in   \eqref{coupled_problem_JMGT_Pennes_1} reduces to  the Jordan--Moore--Gibson--Thompson (JMGT) \cite{Jordan_2014},  with the Westervelt nonlinearity written in terms of the acoustic pressure: 
%\label{Main_problem}
\begin{equation}
\tau p_{ttt}+p_{tt}-c^{2}\Delta p-\delta \Delta p_{t}=K\left(p^2\right)_{tt} ,  \label{MGT_1}
\end{equation}%
see \cite{Jordan_2014, Kaltenbacher2015MathematicsON} and reference therein for the derivation of \eqref{MGT_1}. 

Equation \eqref{MGT_1} is an alternative model to the classical  Westervelt equation \cite{westervelt1963parametric}
\begin{equation}\label{Wester}
p_{tt}-c^{2}\Delta p-\delta \Delta p_{t}=K\left(p^2\right)_{tt}. 
\end{equation}
By taking into account the nonlinear effect modeled by a quadratic velocity term,  we obtain the JMGT equation with Kuznetsov nonlinearity (written in terms of the acoustic velocity potential $\psi$):  
\label{Main_problem}
\begin{equation}
\tau \psi_{ttt}+\psi_{tt}-c^{2}\Delta \psi-\delta \Delta \psi_{t}=\frac{\partial }{%
\partial t}\left( K(\psi_{t})^{2}+|\nabla
\psi|^{2}\right) .  \label{MGT_1_1}
\end{equation}%
The presence of the term $\tau p_{ttt}$ in \eqref{MGT_1} (resp. $\tau\psi_{ttt}$ in \eqref{MGT_1_1}) arises from employing the constitutive Cattaneo law of heat conduction instead of the classical Fourier law during the derivation process of the Westervelt (resp. Kuznetsov) equation. This, of course, changes  completely the nature of the problem,  since in equation \eqref{Wester}, the presence of the term $-\delta \Delta p_{t}$ yields a parabolic structure of the equation, thereby inducing a regulazation effect. This is not the case for the hyperbolic type equations \eqref{MGT_1} and \eqref{MGT_1_1}, hence requiring different types of estimates than the related second-order equations. 

%%%%
In recent years, there has been a lot of work investigating the JMGT equation and its linearized version; the Moore--Gibson--Thompson (MGT) equation.
%%%
Concerning some studies for the  MGT or JMGT equations, we refer the interested  readers to  \cite{bucci2019feedback, bucci2019regularity, conejero2015chaotic,Kal_Las_Mar,Lizama_Zamorano_2019, Trigg_et_al, PellSaid_2019_1, P-SM-2019,kaltenbacher2012well}, and the references therein.  
The MGT and JMGT equations with a memory term have been also investigated recently. For the  MGT with memory, the reader is referred to  \cite{Bounadja_Said_2019,Liuetal._2019,dell2016moore} and to \cite{lasiecka2017global,nikolic2020mathematical,Nikolic_SaidHouari_2} for the JMGT with memory.  
%%%%%%%
The singular limit problem when $\tau\rightarrow 0$ has been rigorously justified in \cite{KaltenbacherNikolic} where the authors   showed that   the limit of \eqref{MGT_1_1}  as $\tau \rightarrow 0$ leads to the Kuznetsov equation. See also  the recent contributions   \cite{Bongarti_All,bongarti2021vanishing}. The study of the controllability properties of the MGT type equations can be found for instance in \cite{bucci2019feedback, Lizama_Zamorano_2019}.

The global well posedness and large time behaviour of the solution to the Cauchy problem associated to the nonlinear 3D model  \eqref{MGT_1_1} has been recently investigated in \cite{CHEN2023113316, Racke_Said_2019,Said_Houari_2022_1,Said-Houari:2022aa} where  small data global existence results in   various
function spaces have been established. We also point out the recent result in \cite{Nikolic_Kaltenbacher_Fractional}, where a fractional models associated to \eqref{MGT_1_1} were    derived and analyzed.

For thermal modeling, the first equation in \eqref{coupled_problem_JMGT_Pennes_1} is coupled with the Pennes bioheat equation:
\begin{equation} \label{Heat_Eq}
	\rhoa \Ca\bar{\Theta}_t -\kappaa\Delta \bar{\Theta}+ \rhob \Cb W(\bar{\Theta}-\Thetaa) = 0.
\end{equation}
 Equation \eqref{Heat_Eq} is a widely used model for studying heat transfer in biological systems.  It  takes into account the heat transfer by conduction in the tissues and the convective heat transfer due to blood perfusion.  See ~\cite{pennes1948analysis} for the derivation of \eqref{Heat_Eq}.

 The presence of the constant $\tau$ in \eqref{coupled_problem_JMGT_Pennes} is due to the use of the Cattaneo model for hyperbolic heat conduction   in the derivation of the JMGT equation  
 from the equations of fluid dynamics. It is well known that the Cattaneo law  eliminates the so-called infinite speed of propagation paradox observed when parabolic heat models are used \cite{Ca48,Ch98}.
  In fact, when $\tau=0$, system \eqref{coupled_problem_JMGT_Pennes_1} reduces to the  Westervelt--Pennes--Fourier system: 
 \begin{equation}\label{Westervelt--Pennes--Fourier}
  \left\{
  \begin{aligned}
	&p_{tt}-c^2(\bar{\Theta})\Delta p - b \Delta p_t = k(\bar{\Theta}) \left(p^2\right)_{tt},\\
	&\rhoa \Ca\bar{\Theta}_t -\kappaa\Delta \bar{\Theta}+ \rhob \Cb W(\bar{\Theta}-\Thetaa) = \mathcal{Q}(p_t)
\end{aligned} 
\right.	
\end{equation}
 which has been investigated recently by  Nikoli\'c and Said-Houari  in \cite{Nikolic_2022}  and \cite{NIKOLIC2022628}.    More precisely, for  Dirichlet--Dirichlet boundary conditions, they proved in \cite{Nikolic_2022}  a local well-posedness result of \eqref{Westervelt--Pennes--Fourier} by employing   the energy method together with a fixed point argument. In \cite{Nikolic_2022} the  global existence result and the asymptotic behavior of the solution of \eqref{Westervelt--Pennes--Fourier} under a smallness assumption on the initial data,  was established. Using the maximal regularity estimate for parabolic systems, Wilke in \cite{Wilke_2022} improves slightly the regularity assumptions in \cite{Nikolic_2022} and also considered the case $b=b(\bar{\Theta})$. We also mention the recent result in \cite{Nikolic_Said_Houari_Mixed_Cond} where a local  well-posedness result was shown for mixed Neumann and nonlinear absorbing boundary conditions in the acoustic component and Dirichlet boundary condition for the temperature.
 
 Recently, the authors of this paper considered in \cite{Benabbas_Said_Houar_2023} the Westervelt--Pennes--Cattaneo model where the second equation in \eqref{Westervelt--Pennes--Fourier} is replaced by its hyperbolic version (see  \cite[Eq.\ 3]{kabiri2021analysis} and \cite[Eq. 7]{xu2008non}): 
\begin{equation} \label{Hyperbolic_Pennes}
\begin{aligned}
\tau \rhoa \Ca &\bar{\Theta}_{tt}+(\rhoa \Ca +\tau \rhob \Cb W) \bar{\Theta}_t +\rhob \Cb W (\bar{\Theta}-\Thetaa)-\kappaa \Delta \bar{\Theta} \\
=&\,\mathcal{Q}(p_t)+ \tau \partial_t\mathcal{Q}(p_t).
\end{aligned}
\end{equation}
 Based on higher-order energy estimates and 
on the contraction mapping principle, we proved the existence and uniqueness of a strong solution of the Westervelt--Pennes--Cattaneo system. In addition,  using uniform estimates with respect to the relaxation parameter
$\tau$, we  showed  that
the solutions to the Westervelt--Pennes--Cattaneo problem converge to solutions of the
Westervelt--Pennes--Fourier system provided the relaxation parameter $\tau$ tends to zero.   
    
    In this paper, we consider the JMGT--Pennes system \eqref{coupled_problem_JMGT_Pennes} and establish its local well-posedness,   provided that the initial data satisfies
\begin{equation}
(p_0, p_1, p_2) \in H^2( \Om) \cap H^1_0(\Om) \times H^2( \Om) \cap H^1_0(\Om) \times H^1_0(\Om)\quad  \text{and}\quad  \Theta_0\in H^3(\Omega),
\end{equation}
 with a smallness condition on the pressure data that can be imposed on a lower-order norm than the one needed in the energy analysis, provided that the final time is short enough. If we require a smallness assumption on the full energy norm for the pressure data, then the final time can be made arbitrarily large. From the technical point of view, the smallness assumption on the lower-topology makes the proof more involved since some extra estimates of lower-order energy are needed (see Proposition  \ref{lemma_lower_norms}). These estimates should be properly factored out in the nonlinear estimates using the interpolation inequality \eqref{Agmon_Inequality}.      
      
    The rest of the paper is organized  as follows: Section \ref{Section_Prel} contains the necessary theoretical preliminaries, where we collect some theoretical results that will prove useful in the sequel. In Section \ref{Main_Result}, we introduce the general assumptions on the coefficients in system \eqref{coupled_problem_JMGT_Pennes} and state the main result of this paper. Section \ref{Section_Linearized_JMGT}  is devoted to the energy analysis of the JMGT equation with variable coefficients and we prove several energy estimates,  while Section  \ref{Section_Pennes_Equation}  treats the linearized Pennes equation. Although the two linearized problems in Sections \ref{Section_Linearized_JMGT} and \ref{Section_Pennes_Equation} are decoupled, the primary difficulty lies in maintaining consistency in the estimates for each linearized equation.   This is due to the  
     coupling in the nonlinear problem via the source term $\mathcal{Q}(\bar{\Theta},p_t)$ and via the coefficients (which depend on $\bar{\Theta}$) within the JMGT equation. Recognizing this nonlinear characteristic of the problem inherently suggests a correlation between the estimates obtained from the linearized problems.  
        In   Section \ref{Section_Well_Posdness}, we prove the local well-posedness of the nonlinear problem \eqref{coupled_problem_JMGT_Pennes}.
        
\section{Statement of the problem and main result}\label{Main_Result}
In this section, we state our local well-posedness result. Before that, we introduce the essential assumptions concerning the coefficients and the source terms in \eqref{coupled_problem_JMGT_Pennes}.

For clarity of exposition, we set 
 \begin{equation}
m=\rhoa \Ca\qquad \text{and}\qquad \ell=\rhob \Cb W,
\end{equation} 
and  make the change of variables 
$\Theta=\bar{\Theta}-\Theta_a$
in the temperature, then we can recast system \eqref{coupled_problem_JMGT_Pennes} as 
\begin{subequations}\label{Main_system_JMGT} 
\begin{equation}\label{modified_temp_eq}
\left\{ 
\begin{aligned}
&\tau p_{ttt}+(1-2 k (\Theta) p)p_{tt}-h (\Theta)\Delta p - \zeta (\Theta)\Delta p_t = 2 k (\Theta)\left(p_{t}\right)^2, \, &\text{in} \ \Omega \times (0,T),\\
&m\Theta_t -\kappaa \Delta \Theta + \ell \Theta = \Phi(\Theta, p_t), \, &\text{in} \ \Omega \times (0,T).
\end{aligned}
\right.
\end{equation} 
System \eqref{modified_temp_eq} is complemented with homogeneous boundary conditions
\begin{eqnarray} \label{homog_dirichlet}
p\vert_{\partial \Om}=0, \qquad \Theta\vert_{\partial \Om}=0
\end{eqnarray} 
and the initial conditions
\begin{eqnarray} \label{init_cond}
p|_{t=0}=p_0,\quad p_t|_{t=0}=p_1,\quad \quad p_{tt}|_{t=0}=p_2, \quad \Theta|_{t=0}=\Theta_0:=\bar{\Theta}_0-\Thetaa.
\end{eqnarray}
\end{subequations} 
The involved functions $h, \zeta, k$ and $\Phi$ are given in terms of $c, \delta, K$ and $\mathcal{Q}$ as follows
\begin{equation}\label{funct_k}
\begin{aligned}
&k (\Theta)=K(\Theta+\Thetaa)=\frac{1}{\rho c^2(\Theta+\Thetaa)} \beta_{\text{acou}},\qquad   h (\Theta)=c^2(\Theta+\Thetaa),\\
&\zeta(\Theta)= \delta(\Theta+\Thetaa), \quad \Phi(\Theta, p_t)=\phi(\Theta) (p_t)^2 \quad \text{with} \quad \phi(\Theta)=\frac{2 \beta(\Theta+\Thetaa)}{\rhoa h^2 (\Theta)}.
\end{aligned}
\end{equation}
In order to prove  the well-posedness result of \eqref{Main_system_JMGT}, we make the following assumptions  on the functions $h,\, \zeta,\, k $ and $\Phi$ and on the initial data. Note that the conditions on the medium parameters reflect a polynomial dependence on the temperature, since typically the speed of sound $c=c(\Theta)$ and the diffusivity of sound $\beta=\beta(\Theta)$ are determined using a least-square fit of experimental observations, see \cite{connor2002bio}.

\begin{assumption}[Assumptions on the coefficients and source terms] \label{Assumption_nonlinear}
We assume  that $h \in C^2(\mathbb{R})$ and  there exists $h_1>0$ such that
\begin{subequations}
\begin{equation} \label{bound_h}
h (s) \geq h_1, \quad \forall s \in \mathbb{R}.\tag{H1}
\end{equation}
Moreover, assume that there exist $\gamma_1 >0$ and $C>0$, such that
\begin{equation} \label{h''_assump}
\vert h ''(s) \vert \leq C (1+\vert s \vert^{\gamma_1}), \quad \forall s \in \mathbb{R}.\tag{H2}
\end{equation} 
Using Taylor's formula, we also have
\begin{equation} \label{h'_assump}
\vert h '(s) \vert \leq C (1+\vert s \vert^{1+\gamma_1}), \quad \forall s \in \mathbb{R}.\tag{H3}
\end{equation}
\end{subequations}
Since the function $k $ is related to the speed of sound by the formula \eqref{funct_k}, it follows that
\begin{subequations}
\begin{equation}\label{k_1}
\vert k (s) \vert \leq k_1:=\frac{\beta_{\text{acous}}}{\rho h_1}.\tag{K1}
\end{equation}
Further, we have 
\begin{equation}
\begin{aligned}
%\vert k '(s) \vert &\lesssim k_1^2\vert h '(s) \vert \lesssim k_1^2 (1+\vert s \vert^{1+\gamma_1}),\\
\vert k ''(s) \vert &\lesssim k_1^2 \vert h ''(s) \vert+k_1^3 \vert h '(s) \vert^2 \lesssim  k_1^2(1+\vert s \vert^{\gamma_1})+k_1^3(1+\vert s \vert^{1+\gamma_1})^2,
\end{aligned}
\end{equation}
which by using Taylor's formula, implies that there exists $\gamma_2>0$, such that
\begin{equation} \label{properties_k}
\vert k '(s) \vert \lesssim (1+\vert s \vert^{1+\gamma_2}), \qquad \vert k ''(s) \vert \lesssim (1+\vert s \vert^{\gamma_2}).\tag{K2}
\end{equation}
\end{subequations}
Similarly, since $\zeta$ is proportional to the speed of sound (see \eqref{funct_k}, \eqref{delta_definition}), we can assume that there exit $\zeta_1>0$ and $\gamma_3>0$ such that
\begin{subequations}
\begin{equation} \label{zeta}
\zeta(s) \geq \zeta_1, \quad \vert \zeta '(s) \vert \lesssim (1+\vert s \vert^{1+\gamma_3}), \qquad \vert \zeta ''(s) \vert \lesssim (1+\vert s \vert^{\gamma_3}), \qquad s \in \mathbb{R}. \tag{Z}
\end{equation} 
In addition, from the definition of the functions $\phi, \beta, h$ \eqref{funct_k}, \eqref{beta_definition}, we can see that 
$$ \phi(\Theta) \sim \frac{1}{c(\Theta)}.$$
Then, $\phi$ satisfies
\begin{equation} \label{phi_1}
\vert \phi(s) \vert \leq \phi_1, \quad s \in \mathbb{R}
\tag{F1}
\end{equation}
and as for the function $k$, there exists $\gamma_4>0$ such that 
\begin{equation} \label{phi_inequalities}
 \vert \phi '(s) \vert \lesssim (1+\vert s \vert^{1+\gamma_4}), \qquad \vert \phi ''(s) \vert \lesssim (1+\vert s \vert^{\gamma_4}), \qquad s \in \mathbb{R}.\tag{F2}
\end{equation}
\end{subequations}
\end{assumption}

\begin{assumption}[Assumptions on the initial data]\label{Assumption_Initial data}
 Let the initial data have the regularity
\begin{equation}
\begin{aligned}
 (p_0, p_1, p_2) \in&\, \big[H^2(\Om) \cap H^1_0(\Om) \big] \times  \big[ H^2(\Om) \cap H^1_0(\Om) \big] \times H^1_0(\Om)\\
\Theta_0 \in&\, H^3(\Om) \cap H^1_0(\Om),
\end{aligned}
\end{equation}
and let the compatibility condition be satisfied
\begin{equation}
%p_3:=p_{ttt}|_{t=0} \in L^2(\Om), \qquad 
\Theta_1=\Theta_t|_{t=0} \in H^1_0(\Om).
\end{equation}
\end{assumption}
\subsection{Main result}

The main contribution of this work pertains to the existence and uniqueness of a local in-time solution to the coupled system \eqref{Main_system_JMGT}. 
In order to state and prove our main result, we introduce the following spaces of solutions 
\begin{equation}\label{spaces}
\begin{aligned}
\mathcal{X}_p^2:=&\,\Big\{p \in L^{\infty}(0, T; H^2( \Om) \cap H^1_0(\Om)), p_t \in L^{\infty}(0, T; H^2( \Om) \cap H^1_0(\Om)),\\
&  \qquad p_{tt} \in L^{\infty}(0, T; H^1_0(\Om)) \Big\},\\
% \quad p_{ttt} \in L^{\infty}(0, T; L^2(\Om)) 
\mathcal{X}_\Theta^2:=&\,\big\{  \Theta \in L^2(0, T; H^3(\Om)\cap  H^1_0(\Om) )\cap L^\infty(0, T; H^2(\Om)\cap H^1_0(\Om)),\\
&\, \qquad \Theta_t \in L^\infty(0, T; H^1_0(\Om)) \cap L^2(0, T; H^2(\Om)\cap H^1_0(\Om)),\\
&\, \qquad \Theta_{tt} \in L^2(0, T; L^2(\Om)) \big\}.
\end{aligned}
\end{equation}
Now, we are in position to state the local well-posedness result for the problem \eqref{Main_system_JMGT}.
\begin{theorem}\label{local_Existence_Thoerem} Given $\tau>0$. Let Assumptions \ref{Assumption_nonlinear} and  \ref{Assumption_Initial data} hold. There exist a final time $T>0$,  small  $  \eta_0=\eta_0(T)>0$  and a constant $R>0$ such that if the initial data satisfy
\begin{equation}
\| p_0 \|_{H^1}^2 + \| p_1 \|_{H^1}^2 + \tau \| p_2 \|_{L^2}^2 \leq \eta_0^2,
\end{equation}
the system \eqref{Main_system_JMGT} admits a unique solution $(p, \Theta) \in \mathcal{X}_p^2 \times \mathcal{X}_\Theta^2$ that satisfies the estimate 
\begin{equation}
\begin{aligned}
&\| p \|_{L^\infty H^2}^2+ \| p_t \|_{L^\infty H^2}^2 +\tau \| p_{tt} \|_{L^\infty H^1}^2+ \| \Theta \|_{L^2 H^3}^2\\
& +\| \Theta \|_{L^\infty H^2}^2+ \| \Theta_t \|_{L^\infty H^1}^2+\| \Theta_t \|_{L^2 H^2}^2+ \| \Theta_{tt} \|_{L^2 L^2}^2 \leq R^2.
\end{aligned}
\end{equation}
Moreover, the solution depends continuously on the data with respect to the norm in $\mathcal{X}_p^2 \times \mathcal{X}_\Theta^2$.
\end{theorem}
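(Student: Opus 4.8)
The plan is to run a two-layer fixed-point scheme: an inner fixed point handling the coupling between the two equations for frozen coefficients, and then a self-mapping/contraction argument on a ball in $\mathcal{X}_p^2 \times \mathcal{X}_\Theta^2$. More precisely, given a datum $(\hat p, \hat\Theta)$ in a ball $B_R \subset \mathcal{X}_p^2 \times \mathcal{X}_\Theta^2$ (plus the small lower-order bound $\eta_0$ on the pressure data), I freeze the coefficients $h(\hat\Theta)$, $\zeta(\hat\Theta)$, $k(\hat\Theta)$ and the right-hand sides $2k(\hat\Theta)(\hat p_t)^2$ and $\phi(\hat\Theta)(\hat p_t)^2$, and solve the \emph{decoupled} linear problems: the variable-coefficient linear JMGT equation of Section \ref{Section_Linearized_JMGT} for $p$, and the linear Pennes equation of Section \ref{Section_Pennes_Equation} for $\Theta$. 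This defines a map $\mathcal{T}(\hat p, \hat\Theta) = (p, \Theta)$, and the theorem follows once I show $\mathcal{T}$ maps $B_R$ into itself for $T$ small (or $\eta_0$ small, depending on which smallness regime) and is a contraction in the lower-order topology $\mathcal{X}_p^1 \times \mathcal{X}_\Theta^1$.

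\textbf{Self-mapping.} The first main task is to verify $\mathcal{T}(B_R) \subseteq B_R$. Here I invoke the energy estimates for the two linearized equations from Sections \ref{Section_Linearized_JMGT} and \ref{Section_Pennes_Equation}, which yield bounds of the form $\|p\|_{\mathcal{X}_p^2} \lesssim \mathcal{C}(\text{data})\big(\|p_0,p_1,p_2\| + \|\text{nonlinear source}\|\big)$, provided the frozen coefficients satisfy the ellipticity/positivity conditions (H1), (Z), etc., and a smallness condition controlling $2k(\hat\Theta)\hat p$ in $L^\infty(L^\infty)$ so that the factor $1 - 2k(\hat\Theta)\hat p$ in front of $p_{tt}$ stays bounded below. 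That smallness is exactly where the lower-topology bound $\eta_0$ enters: using the Agmon/Gagliardo--Nirenberg interpolation inequality \eqref{Agmon_Inequality} one estimates $\|\hat p\|_{L^\infty(L^\infty)}$ by a product of a low-order norm (which we control by $\eta_0$, or by $\eta_0 + T^{1/2}(\dots)$ after integrating in time, via Proposition \ref{lemma_lower_norms}) and a high-order norm (bounded by $R$); choosing $\eta_0$ small, or $T$ small, makes the product small. The nonlinear source terms $k(\hat\Theta)(\hat p_t)^2$ and $\phi(\hat\Theta)(\hat p_t)^2$ are estimated in the requisite norms by Hölder, Sobolev embeddings $H^1 \hookrightarrow L^4, L^6$ and $H^2 \hookrightarrow L^\infty$, the polynomial growth bounds (H2)--(H3), (K2), (Z), (F1)--(F2) on the coefficient functions composed with $\hat\Theta \in L^\infty(H^2)$, and — crucially — an overall factor of $T$ or $T^{1/2}$ coming from integrating the essentially time-bounded quadratic nonlinearities over $(0,T)$. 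Absorbing these factors and the $\eta_0$-factors, one fixes $R$ first (in terms of the data) and then $T$ (and $\eta_0$) small enough to close the ball.

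\textbf{Contraction.} For two iterates $(p^{(1)},\Theta^{(1)}) = \mathcal{T}(\hat p^{(1)}, \hat\Theta^{(1)})$ and $(p^{(2)},\Theta^{(2)}) = \mathcal{T}(\hat p^{(2)}, \hat\Theta^{(2)})$, I write the equations for the differences $\delta p = p^{(1)}-p^{(2)}$, $\delta\Theta = \Theta^{(1)}-\Theta^{(2)}$, which satisfy the same linear equations with zero data and right-hand sides that are differences of coefficients times solutions plus differences of sources. Using Lipschitz bounds for $h,\zeta,k,\phi$ on the bounded range of $\hat\Theta$ (from the $C^2$/polynomial-growth assumptions and $\hat\Theta \in L^\infty(H^2) \hookrightarrow L^\infty(L^\infty)$), and again factoring out $\eta_0$ and powers of $T$ via \eqref{Agmon_Inequality} and Proposition \ref{lemma_lower_norms}, the linear energy estimates in the \emph{lower}-order spaces give $\|(\delta p, \delta\Theta)\|_{\mathcal{X}_p^1 \times \mathcal{X}_\Theta^1} \le \kappa\, \|(\widehat{\delta p}, \widehat{\delta\Theta})\|_{\mathcal{X}_p^1 \times \mathcal{X}_\Theta^1}$ with $\kappa < 1$ after shrinking $T$. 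A standard argument (the high-order ball is weak-$*$ closed, hence complete under the lower-order metric) then gives a unique fixed point in $B_R$, which is the desired solution; continuous dependence on the data follows from the same difference estimates now keeping the data terms.

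\textbf{Main obstacle.} I expect the delicate point to be closing the self-mapping and contraction while keeping the smallness assumption only on the \emph{low-order} norm $\|p_0\|_{H^1}^2 + \|p_1\|_{H^1}^2 + \tau\|p_2\|_{L^2}^2 \le \eta_0^2$ rather than on the full energy. This forces the argument to track, throughout every nonlinear estimate, the split between a low-order factor (controlled by $\eta_0 + T^{1/2}(\text{high-order})$ via Proposition \ref{lemma_lower_norms}) and a high-order factor (controlled by $R$), and to make sure the low-order energy estimates for the linearized JMGT equation genuinely propagate smallness — i.e. that the low-order norm of the solution stays $O(\eta_0)$ up to a $T$-dependent correction. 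Balancing the interpolation exponents in \eqref{Agmon_Inequality} so that the high-order factor always comes with a positive power of $T$ or of $\eta_0$, and handling the term $(1-2k(\Theta)p)p_{tt}$ — whose coefficient must be bounded below uniformly in the iteration — are the crux of the matter; the rest is bookkeeping with Sobolev embeddings and the polynomial growth hypotheses.
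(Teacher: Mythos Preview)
Your proposal is essentially correct and follows the same Banach fixed-point strategy as the paper: freeze $(\hat p,\hat\Theta)$ in a ball carrying both a low-order constraint $\|\hat p\|_{\mathcal{X}_p^1}\le\eta$ and high-order constraints $\|\hat p\|_{\mathcal{X}_p^2}\le R_1$, $\|\hat\Theta\|_{\mathcal{X}_\Theta^2}\le R_2$; solve the decoupled linear problems; prove self-mapping via the linear energy bounds plus Agmon interpolation to split low/high norms; and contract in a weaker topology (the paper uses $\mathbf{Y}$ with $\Theta\in L^\infty H^2$, $\Theta_t\in L^\infty H^1$) by taking $T$ small. Your identification of the main obstacle --- propagating the low-order smallness via Proposition~\ref{lemma_lower_norms} so that only $\eta_0$, not the full energy, needs to be small --- matches the paper exactly.

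Two minor discrepancies are worth flagging. First, your ``two-layer'' / ``inner fixed point'' language is misleading: there is no inner loop, just a single map $\mathcal{T}$ solving two decoupled linear problems, as you in fact describe. Second, you linearize by keeping $(1-2k(\hat\Theta)\hat p)$ as a variable coefficient on $p_{tt}$ and putting only $2k(\hat\Theta)(\hat p_t)^2$ on the right, whereas the paper moves the entire quadratic term $2k(\Theta^\ast)\bigl((p_t^\ast)^2+p^\ast p_{tt}^\ast\bigr)$ to the source, so the linearized JMGT in Section~\ref{Section_Linearized_JMGT} has coefficient $1$ on $p_{tt}$. The paper's choice means Propositions~\ref{lemma_lower_norms}--\ref{prop1} apply verbatim without any non-degeneracy hypothesis; the bound $1-2k_1\eta^{1-d/4}R_1^{d/4}>0$ is checked separately only to confirm the fixed point solves the original equation. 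Your variant would work too but requires redoing the linear estimates with a variable (bounded-below) coefficient on $p_{tt}$, which you cannot literally quote from Section~\ref{Section_Linearized_JMGT}.
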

The proof of Theorem \ref{local_Existence_Thoerem} will be given in Section \ref{Section_Well_Posdness}. But now we briefly make a few comments on the statement above. 
\begin{enumerate}
\item[1.]  We arrive at the conclusion of Theorem \ref{local_Existence_Thoerem} above by assuming a smallness condition on the pressure data on a  lower-order norm than the one needed in the energy analysis, provided that the final time is short enough. Indeed, the requirement for the pressure data to be small   arises naturally within the proof of the local well-posedness, and it is related to a nondegeneracy issue that characterizes nonlinear acoustic models as the one considered here, see \cite{Kaltenbacher_Nikolic_2022, Nikolic_2022, bongarti2021vanishing} and reference therein. %Otherwise, the final time $T$ being short is sufficient to %yield well-posedness. 
We point out that $T$ can be made arbitrarily large if the smallness assumption is imposed on the full energy norm of the pressure data. See the estimate \eqref{Inequalti_smallness_T}.  However, we prefer the former scenario since it is more relevant in ultrasound applications \cite{kaltenbacher2007numerical}, where usually the final time is short and the data are smooth but not small. As for the bioheat pennes equation, no smallness assumption is required on the temperature data.  
%\item[2.] By using time weight, the time integrability of the solution can be improved. For instance, it is possible to show,  under the same assumptions on the initial data, we can prove that $\sqrt{t}\,\Theta_t \in L^\infty(0, T; H^2(\Om)\cap  H^1_0(\Om) )$. See the recent publication \cite{}. 
\item [2. ]   It is known  (see for instance \cite{Kal_Las_Mar}) that for the JMGT equation \eqref{MGT_1} there is a critical parameter 
        \begin{equation}\label{chi}
       \chi=\delta -\tau c^{2},
       \end{equation}
       that controls  the behavior of the solution. More precisely, the authors in \cite{Kal_Las_Mar} proved  that the problem \eqref{MGT_1} is globally well-posed and its solution is exponentially stable if   $\chi>0$. Although,  this condition seems necessary to study the global existence and asymptotic behavior of the solution of \eqref{MGT_1} and \eqref{MGT_1_1},  it is unnecessary for the local existence theory as in the present paper (see also \cite{KaltenbacherNikolic}).

\end{enumerate}

\section{Preliminaries}\label{Section_Prel}
\subsection{Notation}
Throughout the paper, we assume that $\Omega \subset \R^d$, where $d \in \{1,2,3\}$, is a bounded domain with a $C^3$ boundary. We denote by $T>0$ the final propagation time. The letter $C$ denotes a generic positive constant
that does not depend on time, and can have different values on different occasions.  
We often write $f \lesssim g$ (resp. $f \gtrsim g$) when there exists a constant $C>0$, independent of parameters of interest such that $f\leq C g$ (resp. $f \geq C g$). 
%We note that dependance of a constant on the medium parameters is not always made explicit; however, we make sure to emphasize dependance on the relaxation time $\tau$. 
We often omit the spatial and temporal domain when writing norms; for example, $\|\cdot\|_{L^p L^q}$ denotes the norm in $L^p(0,T; L^q(\Omega))$.
\subsection{Inequalities and embedding results} In the upcoming analysis, we shall employ the continuous embeddings $H^1(\Om) \hookrightarrow L^4(\Om)$, $H^2(\Om) \hookrightarrow L^\infty(\Om)$. In particular, using Poincar\'{e}'s inequality we obtain for $v \in H^1_0(\Om)$ (see \cite[Theorem 7.18]{salsa2016partial})
\begin{equation}\label{Sobolev_Embedding}
\begin{aligned}
& \text{if} \ d>2, \quad \Vert v \Vert_{L^p} \leq C \Vert \nabla v \Vert_{L^2}  \quad \text{for} \quad  2 \leq p \leq \frac{2d}{d-2},\\
& \text{if} \ d=2, \quad \Vert v \Vert_{L^p} \leq C \Vert \nabla v \Vert_{L^2}  \quad \text{for} \quad  2 \leq p < \infty.\\
\end{aligned}
\end{equation}
Moreover, taking into account the boundedness of the operator $(-\Delta)^{-1}: L^2(\Om) \rightarrow H^2(\Om) \cap H^1_0(\Om)$, we obtain  the inequality
$$\qquad \Vert v \Vert_{L^\infty} \leq C_1 \Vert v \Vert_{H^2} \leq C_2 \Vert \Delta v \Vert_{L^2}.$$ 
%We will also call on the 1D-embedding $H^1(0, T, X) \hookrightarrow C(0, T; X)$; that is to say,  the inequality 
%\begin{equation} \label{1D_embedding}
%\begin{aligned}
%\max_{t\in [0, T]} \Vert v(t) \Vert_{X} &\leq C(\Vert v \Vert_{L^2 X}+\Vert v_t \Vert_{L^2 X})\\
%%& \leq C(\Vert \nabla v \Vert_{L^2 L^2}+\Vert v_t \Vert_{L^2 L^2}),
%\end{aligned}    
%\end{equation}
%holds  for all $v \in H^1(0, T; X)$, 
%where the constant $C>0$ depends only on $T$ (see, e.g. \cite[Theorem 2, p. 286]{evans2010partial}).\\
We will also make a repeated use of  Young's $\varepsilon$-inequality 
\begin{equation}\label{Young_Inequality}
xy \leq \varepsilon x^n+C(\varepsilon) y^m, \quad \text{where}\quad \ x, y >0, \quad 1 <m,n <\infty,\quad \frac{1}{m}+\frac{1}{n}=1,
\end{equation}
and $C(\varepsilon)=(\varepsilon n)^{-m/n}m^{-1}$.\\
One particular form of \eqref{Young_Inequality} that we frequently use in the proof is the following:
\begin{equation}
xy\leq \varepsilon x^2+\frac{1}{4\varepsilon} y^2. 
\end{equation}
We will also need Agmon's interpolation inequality~\cite[Ch.\ 13]{agmon2010lectures} for functions in $H^2(\Omega)$:
\begin{equation}\label{Agmon_Inequality}
\| u\|_{L^\infty(\Omega)} \leq C_{\textup{A}} \|u\|_{H^2(\Omega)}^{d/4}\|u\|_{L^2(\Omega)}^{1-d/4}, \qquad d \leq 4
\end{equation}
and Ladyzhenskaya's inequality for functions in $u \in H^1(\Om)$:
\begin{equation}\label{Lady_Inequality}
\| u\|_{L^4(\Omega)} \leq C_{\textup{A}} \|u\|_{H^1(\Omega)}^{d/4}\|u\|_{L^2(\Omega)}^{1-d/4}, \qquad d \leq 4.
\end{equation}
%Further, we will make use of  Ladyzhenskaya's inequality for $u \in H^1(\Om)$
%\begin{equation} \label{lady}
%\Vert u \Vert_{L^4} \leq C\Vert u \Vert_{L^2}^{1-d/4} \Vert u \Vert_{H^1}^{d/4},\qquad 1\leq d\leq 4.  
%\end{equation}   

 To keep the presentation self-contained, we also state here the precise version of Gronwall's inequality that will be employed in the proofs.
\begin{lemma}\label{Lemma_Gronwall}
We assume that $u\in C([0,T]; \R)$, $T\in (0,\infty)$, satisfies the integral inequality 

\begin{equation}
u(t)\leq u_0+\int_0^t a(s) u(s)\ds+\int_0^t b(s)\ds, \quad \text{on}\quad [0,T),
\end{equation}  
for some $0\leq a\in L^1(0,T)$ and $b\in L^1(0,T)$. Then it holds that 
\begin{equation}
u(t)\leq u_0 e^{A(t)}+\int_0^t b(s) e^{A(t)-A(s)}\ds,\quad \forall t\in (0,T),
\end{equation}
where
\begin{equation}
A(t)= \int_{0}^t a(s)\ds. 
\end{equation}
\end{lemma}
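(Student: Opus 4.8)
The plan is to run the classical integrating-factor argument, taking a little care because $a$ is only $L^1$, so that the functions involved are absolutely continuous rather than $C^1$ and the comparison is done in the a.e.\ sense.

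First I would introduce the majorant
\[
w(t) := u_0 + \int_0^t a(s)u(s)\ds + \int_0^t b(s)\ds, \qquad t \in [0,T).
\]
This is well defined and absolutely continuous on $[0,T)$: since $u \in C([0,T];\R)$ is bounded and $0\le a \in L^1(0,T)$, the product $au \in L^1(0,T)$, while $b \in L^1(0,T)$ by hypothesis. By the assumption of the lemma, $u(t) \le w(t)$ for all $t$, and because $a \ge 0$ a.e.\ this yields $a(t)u(t) \le a(t)w(t)$ for a.e.\ $t$. Differentiating the absolutely continuous function $w$ then gives
\[
w'(t) = a(t)u(t) + b(t) \le a(t)\,w(t) + b(t), \qquad \text{for a.e. } t \in (0,T).
\]
Next I would multiply by the integrating factor $e^{-A(t)}$, where $A(t)=\int_0^t a(s)\ds$ is absolutely continuous with $A'=a$ a.e.\ and $A(0)=0$. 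Since $A$ is bounded on $[0,T]$ and $s\mapsto e^{-s}$ is Lipschitz on the range of $A$, the map $e^{-A}$ is absolutely continuous, hence so is the product $t\mapsto e^{-A(t)}w(t)$, and
\[
\frac{\textup{d}}{\textup{d}t}\bigl(e^{-A(t)} w(t)\bigr) = e^{-A(t)}\bigl(w'(t) - a(t)w(t)\bigr) \le e^{-A(t)} b(t), \qquad \text{for a.e. } t.
\]
Integrating over $(0,t)$, using the fundamental theorem of calculus for absolutely continuous functions together with $A(0)=0$ and $w(0)=u_0$, I obtain $e^{-A(t)} w(t) - u_0 \le \int_0^t e^{-A(s)} b(s)\ds$, that is,
\[
w(t) \le u_0 e^{A(t)} + \int_0^t e^{A(t)-A(s)} b(s)\ds,
\]
and combining with $u(t)\le w(t)$ gives the claimed bound for every $t\in(0,T)$.

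The only genuine point of care --- and hence the "main obstacle," though it is a minor one --- is the regularity bookkeeping: because $a$ need not be continuous, one cannot simply invoke the classical ODE comparison principle, so one must justify that $w$ and $e^{-A}w$ are absolutely continuous in order to apply the product rule and the fundamental theorem of calculus in the a.e.\ sense. If one prefers to sidestep the measure-theoretic point, an alternative is a purely integral iteration: substituting the hypothesis into itself repeatedly produces partial sums of the exponential series in $A(t)$ and in $A(t)-A(s)$, with a remainder bounded by $\sup_{[0,t]}|u|\cdot A(t)^n/n!\to 0$; I would note this as a remark but present the integrating-factor proof as the main argument.
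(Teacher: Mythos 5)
Your proof is correct. The paper itself states this Gronwall lemma without proof (it is recorded only to fix the precise version used later), so there is nothing to compare against; your integrating-factor argument, with the careful observation that $w$, $A$, and $e^{-A}w$ are absolutely continuous so that the product rule and the fundamental theorem of calculus apply in the a.e.\ sense, and that $a\ge 0$ is what lets you pass from $u\le w$ to $au\le aw$, is exactly the standard and complete way to establish it.
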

\section{ Analysis of the linearization of the JMGT equation}
\label{Section_Linearized_JMGT}
The method of the proof of the local existence is based on the application of the contraction mapping theorem to solutions of a related linear problem. For this reason, we need to analyze two linear (decoupled)  systems, one associated with the JMGT equation and the other one with the Pennes model. First, we  consider the following linearization of the first equation in system \eqref{Main_system_JMGT}:
\begin{equation}\label{Linearized_Eq_JMGT}
\left\{
\begin{aligned}
&\tau p_{ttt}+ p_{tt}-r(x,t) \Delta p -b(x,t) \Delta p_t=f(x,t), \qquad (x,t) \in \Om \times (0, T)\\
& p|_{t=0}=p_0,\quad p_t|_{t=0}=p_1,\quad p_{tt}|_{t=0}=p_2,\\
&p|_{\partial\Omega}=0.
\end{aligned}
\right.
\end{equation}
We are able to establish the well-posedness of \eqref{Linearized_Eq_JMGT} under the following regularity and  non-degeneracy  conditions on the coefficients $b$ and $ r$.
\begin{assumption}\label{Assumption_1}
Assume that
\begin{itemize}
\item $r, \,b \in L^{\infty}(0, T; L^\infty (\Om))\cap L^2(0, T; W^{1, \infty}(\Om))$ and  $r_t,\, b_t \in L^2(0, T;L^\infty(\Om))$, 
%\item $b \in L^{\infty}(0, T; L^\infty (\Om)) \cap L^2(0, T; W^{1, \infty}(\Om))$, $b_t \in L^2(0, T;L^\infty(\Om))$,
\item There exist $0<b_0\leq b_1$ and $0 <r_0 \leq r_1$ such that 
\begin{equation}
\begin{aligned}
& b_0 \leq b(x, t) \leq b_1 \quad \text{a.e. in}\quad \Om \times (0, T); \\
& r_0 \leq r(x, t) \leq r_1 \quad \text{a.e. in} \quad \Om \times (0, T).
\end{aligned}
\end{equation}
%\item Let the compatibility condition hold
%\begin{equation}
%p_3:=p_{ttt}|_{t=0} \in L^2(\Om)
%\end{equation}
%where $p_3$ is defined as
%\begin{equation}
%p_3(x)=\frac{1}{\tau} \big(-p_2+r(x,0) \Delta p_0 +b(x,0) \Delta p_1 + f(x,0) \big), \quad x \in \Om.
%\end{equation}
\end{itemize}  
\end{assumption}
In what follows, we prove two results on the existence of solution of the linear problem \eqref{Linearized_Eq_JMGT}. Precisely, we establish well-posedness respectively for initial data $(p_0,p_1,p_2)\in H^1_0(\Om) \times H^1_0(\Om) \times L^2(\Om)$ (Proposition \ref{lemma_lower_norms}) and for $(p_0,p_1,p_2) \in \big[H^2(\Om) \cap H^1_0(\Om)\big] \times \big[H^2(\Om) \cap H^1_0(\Om)\big] \times H^1_0(\Om)$ (Proposition \ref{prop1}).
 This is achieved by employing the energy analysis for smooth Galerkin approximations of the solutions. We will focus our attention on providing the  energy bounds that will later be useful when we address the well-posedness of the nonlinear problem \eqref{Main_system_JMGT}. We omit the other steps in Galerkin method since they are quite classical and are already exposed in details in the context of the JMGT equation in \cite{kaltenbacher2019vanishing}. We note that in view of the hyperbolic nature of the equation \eqref{Linearized_Eq_JMGT} and the time-space variability of the coefficients of the higher-order derivatives, it is crucial that $r_t,\, b_t$ belong to $L^\infty(\Om)$. It does not seem feasible that such  assumption can be weakened even if we attempt to counteract by increasing the regularity of the initial data.
\subsection{First Energy estimate}
In the first functional setting, the energies $E_0, E_1$ are defined by
\begin{equation}
\begin{aligned}
&E_0[p](t):= \frac{1}{2} \Big(\| p_t(t) \|_{L^2}^2+ \| \sqrt{r} \nabla p(t) \|_{L^2}^2 \Big),\\
&E_1[p](t):= \frac{1}{2} \Big(\tau \| p_{tt}(t) \|_{L^2}^2+ \| \sqrt{b} \nabla p_t(t) \|_{L^2}^2 \Big), \qquad t \geq 0.
\end{aligned}
\end{equation}   
The dissipation rate of the total energy $E_0+E_1$ is 
\begin{equation}
\begin{aligned}
&D_0[p](t):= \| \sqrt{b} \nabla p_t(t) \|_{L^2}^2+  \| p_{tt}(t) \|_{L^2}^2, \qquad t \geq 0.
\end{aligned}
\end{equation}
We define  the space of solutions $\mathcal{X}_p^1$ as  
\begin{equation}\label{space_p_1}
\begin{aligned}
\mathcal{X}_p^1:=&\,\Big\{p \in L^{\infty}(0, T; H^1_0(\Om)),\quad  p_t \in L^{\infty}(0, T; H^1_0(\Om)),\\
& \qquad \qquad p_{tt} \in L^{\infty}(0, T; L^2(\Om))\Big\}.
\end{aligned}
\end{equation}
\begin{proposition}\label{lemma_lower_norms} 
Let $T >0, \, \tau >0$. Assume that $f  \in L^2(0, T; L^2(\Om))$ and that the functions $r$ and $b$ satisfy Assumption \ref{Assumption_1}. Given
$$(p_0, p_1, p_2) \in H^1_0(\Om) \times H^1_0(\Om) \times L^2(\Om), $$
then, the problem \eqref{Linearized_Eq_JMGT} has a unique solution $p \in \mathcal{X}_p^1$. Moreover, there exists a constant $C_{p, 1}(\tau)>0$, that does not depend on $T$, such that for all $0 \leq t  \leq T$ it holds
\begin{equation}\label{Main_First_Order_Est}
\begin{aligned}
& E_0[p](t)+E_1[p](t)+ \int_0^t D_0(s) \ds\\
 \leq  &\, C_{p, 1} \Big[ 
\Big(E_0[p](0)+E_1[p](0)\Big)\exp\left\{\int_0^t \Lambda_0(s)  \ds \right\}\\
&\, +\int_0^t F_0(s) \exp\left\{\int_s^t \Lambda_0(r)  \ \textup{d} r \right\} \ds\Big]. 
\end{aligned}
\end{equation}
where 
\begin{equation} \label{lambda_0}
\Lambda_0(t)=1+ \| r_t(t)\|_{L^\infty}+ \| r_t(t) \|_{L^\infty}^2+\| b_t(t) \|_{L^\infty} +\| \nabla b(t) \|^2_{L^\infty}+\| \nabla r(t) \|^2_{L^\infty},
\end{equation}
and 
\begin{equation}
F_0(t) = \| f(t) \|_{L^2}^2.
\end{equation}
\end{proposition}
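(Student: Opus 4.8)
The plan is to establish \eqref{Main_First_Order_Est} via the standard energy method applied to smooth Galerkin approximations, so that the bound passes to the limit; following the statement, I focus only on deriving the a priori estimate. I would work with the multipliers that naturally generate $E_0$ and $E_1$. First, testing \eqref{Linearized_Eq_JMGT} with $p_t$ and integrating over $\Omega$ produces $\frac{\textup{d}}{\textup{d}t} E_0[p] + \|\sqrt{b}\nabla p_t\|_{L^2}^2$-type terms together with the awkward contributions $\tau(p_{ttt},p_t)$ and $\frac12(r_t,|\nabla p|^2)$ and a boundary/divergence term from $-r\Delta p$; the first is handled by writing $\tau(p_{ttt},p_t)=\tau\frac{\textup{d}}{\textup{d}t}(p_{tt},p_t)-\tau\|p_{tt}\|_{L^2}^2$, the coefficient term is absorbed into the Gronwall multiplier via $\frac12(r_t,|\nabla p|^2)\leq \frac12\|r_t\|_{L^\infty}\|\nabla p\|_{L^2}^2 \lesssim \|r_t\|_{L^\infty}E_0$, and the spatial-gradient term $-\int_\Omega \nabla r\cdot\nabla p\, p_t\dx$ is bounded using $\|\nabla r\|_{L^\infty}$, Young's inequality, and Poincar\'e. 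Second, testing with $p_{tt}$ gives $\frac{\textup{d}}{\textup{d}t}E_1[p] + \|p_{tt}\|_{L^2}^2$ after integrating by parts in the $-r\Delta p$ and $-b\Delta p_t$ terms; the resulting terms $(r\nabla p,\nabla p_{tt})$, $\int_\Omega \nabla b\cdot\nabla p_t\,p_{tt}$, and $\frac12(b_t,|\nabla p_t|^2)$ are treated as above. The term $(r\nabla p,\nabla p_{tt})$ is the delicate one: I would rewrite it as $\frac{\textup{d}}{\textup{d}t}(r\nabla p,\nabla p_t) - (r_t\nabla p,\nabla p_t) - (r\nabla p_t,\nabla p_t)$, move the exact derivative to the left-hand side (it can be absorbed into a modified energy equivalent to $E_0+E_1$ since $\varepsilon|(r\nabla p,\nabla p_t)|\leq C\varepsilon(E_0+E_1)$ for small $\varepsilon$), and control the remainder by $\|r_t\|_{L^\infty}$ and $\|r\|_{L^\infty}\lesssim r_1$ times $E_0+E_1$ plus dissipation.

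After combining a suitable linear combination $\frac{\textup{d}}{\textup{d}t}\big(E_0[p]+E_1[p] + (\text{correction terms})\big) + c\,D_0[p] \leq \Lambda_0(t)\big(E_0[p]+E_1[p]\big) + F_0(t)$, where $\Lambda_0$ is exactly the quantity in \eqref{lambda_0} and $F_0(t)=\|f(t)\|_{L^2}^2$ arises from $(f,p_t)$ and $(f,p_{tt})$ via Young's inequality (with the $\|p_t\|_{L^2}^2$, $\|p_{tt}\|_{L^2}^2$ pieces absorbed into $D_0$ and the Gronwall term), I integrate in time over $(0,t)$ and apply Lemma \ref{Lemma_Gronwall} with $u(t)=E_0[p](t)+E_1[p](t)+(\text{correction})$, $a=\Lambda_0$, $b=F_0$. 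The non-negativity $a\geq 0$ is clear from the form of $\Lambda_0$, and $\Lambda_0\in L^1(0,T)$, $F_0\in L^1(0,T)$ follow from Assumption \ref{Assumption_1} and $f\in L^2(0,T;L^2(\Om))$. Because the correction terms are equivalent to $E_0+E_1$ up to constants (for the $\tau(p_{tt},p_t)$ piece one uses Young with $\tau$, and for $(r\nabla p,\nabla p_t)$ one uses $r_0\le r\le r_1$), the Gronwall output yields \eqref{Main_First_Order_Est} with a constant $C_{p,1}(\tau)$ depending only on $\tau$, $r_0$, $r_1$, $b_0$, $b_1$ and the embedding/Poincar\'e constants, hence independent of $T$.

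The main obstacle, as flagged in the text, is managing the terms where the time derivatives $r_t, b_t$ of the coefficients of the top-order spatial operators appear: these cannot be integrated away and must instead be fed into the Gronwall multiplier, which forces $\Lambda_0$ to contain $\|r_t\|_{L^\infty}$ and $\|b_t\|_{L^\infty}$ (and $\|r_t\|_{L^\infty}^2$, coming from a Young splitting of $(r_t\nabla p,\nabla p_t)$ against the dissipation $\|\nabla p_t\|_{L^2}^2$, after borrowing a factor from $\|\sqrt b\nabla p_t\|_{L^2}^2$ using $b\ge b_0$). The secondary technical point is the bookkeeping of the exact-derivative correction terms so that the left-hand side remains coercive and equivalent to $E_0+E_1$; this requires choosing the coefficients in the linear combination of the two energy identities (and the size of $\varepsilon$ in Young's inequality) small enough, but it is otherwise routine. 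Uniqueness follows from the estimate applied to the difference of two solutions with $f=0$ and zero data, and existence from the Galerkin scheme as in \cite{kaltenbacher2019vanishing}.
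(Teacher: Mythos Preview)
Your multipliers, the exact-derivative rewriting of $\tau(p_{ttt},p_t)$ and $(r\nabla p,\nabla p_{tt})$, and the placement of $\|r_t\|_{L^\infty}$, $\|b_t\|_{L^\infty}$, $\|\nabla r\|_{L^\infty}^2$, $\|\nabla b\|_{L^\infty}^2$ into $\Lambda_0$ all match the paper's proof. There is, however, a genuine gap in one step you call routine.

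You assert that the modified quantity $E_0+E_1+\tau(p_{tt},p_t)+(r\nabla p,\nabla p_t)$ (or a weighted version of it) is equivalent to $E_0+E_1$, so that Gronwall can be applied directly to the differential inequality. This is not true for arbitrary $\tau>0$. A quick check with $r\equiv b\equiv 1$: positivity of the $(\nabla p,\nabla p_t)$-block forces the weight on $E_1$ to be strictly less than $1$, while positivity of the $(p_t,p_{tt})$-block forces that same weight to exceed $\tau$. No choice of linear-combination coefficients resolves this once $\tau\geq r_0 b_0/r_1^2$, and the statement is supposed to hold for every $\tau>0$.

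The paper handles this by integrating each energy identity in time \emph{before} combining them. The boundary contribution $(r(t)\nabla p(t),\nabla p_t(t))$ is then split via Young's inequality: the small piece $\varepsilon\|\sqrt{b}\nabla p_t(t)\|_{L^2}^2$ is absorbed by $E_1(t)$, while the large piece $C(\varepsilon)\|\nabla p(t)\|_{L^2}^2$ is \emph{not} absorbed pointwise but is instead converted into a Gronwall-friendly integral through the elementary bound
\[
\|\nabla p(t)\|_{L^2}^2 \;\lesssim\; \|\nabla p_0\|_{L^2}^2 + \int_0^t \|\sqrt{b}\,\nabla p_t(s)\|_{L^2}^2\,\textup{d}s \;\lesssim\; E_0[p](0)+\int_0^t E_1[p](s)\,\textup{d}s.
\]
This produces a clean integrated estimate for $E_1$ with no pointwise-in-$t$ terms on the right; adding a large multiple of it to the integrated $E_0$ estimate then absorbs the remaining $\tau\|p_{tt}(t)\|_{L^2}^2$ without any constraint on $\tau$. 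With this one extra observation your argument goes through; without it, the coercivity you claim simply fails.
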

\begin{proof}
We multiply the equation \eqref{Linearized_Eq_JMGT} by $p_t$ and integrate over $\Om$, using  integration by parts, we get
\begin{equation}
\begin{aligned}
&\frac{1}{2} \ddt \Big( \|p_t \|_{L^2}^2 +\| \sqrt{r} \nabla p \|_{L^2}^2 \Big)+\| \sqrt{b} \nabla p_t \|_{L^2}^2\\
=&\, -\ddt ( \intO \tau p_{tt} p_t \dx)+ \tau \| p_{tt} \|_{L^2}^2- \intO \nabla b \cdot \nabla p_t p_t \dx-\intO \nabla r \cdot \nabla p p_t \dx \\
&\,+ \frac{1}{2} \intO r_t \vert \nabla p \vert^2 \dx+ \intO f p_t  \dx,
\end{aligned}
\end{equation}
where we have used the identity
\begin{equation}
\intO \tau p_{ttt} p_t \dx =\ddt \big(\intO \tau p_{tt} p_t \dx \big)-\tau \| p_{tt} \|^2_{L^2}.
\end{equation}
Applying H\"{o}lder and Young inequalities yields
\begin{equation}\label{E_1_1}
\begin{aligned}
&\frac{1}{2} \ddt \Big( \|p_t \|_{L^2}^2 +\| \sqrt{r} \nabla p \|_{L^2}^2 \Big)+\| \sqrt{b} \nabla p_t \|_{L^2}^2\\
\leq &\, -\ddt ( \intO \tau p_{tt} p_t \dx)+\tau \| p_{tt} \|_{L^2}^2 + \frac{1}{2} \| r_t\|_{L^\infty} \| \nabla p \|_{L^2}^2\\
&\,+ C(\varepsilon)( \| \nabla b \|_{L^\infty}^2 \| \nabla p_t \|^2_{L^2}+\| \nabla r \|_{L^\infty}^2 \| \nabla p \|^2_{L^2} + \| f \|_{L^2}^2)+ \varepsilon \| p_t \|_{L^2}^2.
\end{aligned}
\end{equation}
Since $p_t|_{\partial \Om}=0$, we can employ  Poincar\'{e}'s inequality to get 
\begin{equation}\label{Poincare_1}
\| p_t \|_{L^2}^2\lesssim  \left\|\frac{1}{b}\right\|_{L^\infty}\|\sqrt{b} \nabla p_t \|_{L^2}^2\lesssim \|\sqrt{b} \nabla p_t \|_{L^2}^2
\end{equation}
by using Assumption \ref{Assumption_1}. Plugging   \eqref{Poincare_1} into \eqref{E_1_1} and 
 choosing  $\varepsilon$ small enough,  we obtain
\begin{equation}
\begin{aligned}
& \ddt E_0[p](t)+\| \sqrt{b} \nabla p_t(t) \|_{L^2}^2\\
\lesssim &\,(\tau+ \| r_t\|_{L^\infty}+ \| \nabla r \|_{L^\infty}^2+ \| \nabla b \|_{L^\infty}^2)( \| \nabla p \|_{L^2}^2+\| \nabla p_t \|_{L^2}^2+\| p_{tt} \|_{L^2}^2) 
  \\
&+\| f \|_{L^2}^2-\ddt \Big( \intO \tau p_{tt} p_t \dx\Big)
\end{aligned}
\end{equation}
Integrating from $0$ to $t$ and keeping Assumption \ref{Assumption_1} in mind, it follows
\begin{equation} \label{ineq1}
\begin{aligned}
& E_0[p](t)+\int_0^t\| \sqrt{b} \nabla p_t(s) \|_{L^2}^2\ds\\
\lesssim &\, E_0[p](0)+ \intO \tau \vert p_{tt}(t) p_t(t) \vert \dx+ \intO \tau \vert p_2 p_1 \vert \dx \\
&\, +\int_0^t(\tau+ \| r_t\|_{L^\infty}+\| \nabla r \|_{L^\infty}^2+\| \nabla b \|_{L^\infty}^2)( E_0[p](s) +E_1[p](s)) \ds \\
&+\int_0^t\| f(s) \|_{L^2}^2 \ds.
\end{aligned}
\end{equation}
Next, we have for any $\varepsilon>0$, 
\begin{equation} \label{ineq2}
\begin{aligned}
&\intO \tau \vert p_{tt}(t)  p_t(t) \vert \dx \leq C(\varepsilon)\tau\|p_{tt}(t) \|_{L^2}^2 + \varepsilon \tau \|p_{t}(t) \|_{L^2}^2, \quad \forall t \geq 0,\\
&\intO \tau \vert p_2 p_1 \vert \dx \leq \frac{\tau}{2}\|p_2 \|_{L^2}^2+\frac{\tau}{2}\|p_1 \|_{L^2}^2.
\end{aligned}
\end{equation}
By inserting the estimates in \eqref{ineq2} into  \eqref{ineq1} and selecting $\varepsilon$ suitably small, we obtain
\begin{equation} \label{ineq6}
\begin{aligned}
& E_0[p](t)+\int_0^t\| \sqrt{b} \nabla p_t(s) \|_{L^2}^2\ds\\
\lesssim &\, E_0[p](0)+E_1[p](0)+  \tau\|p_{tt}(t) \|_{L^2}^2 \\
&\, +\int_0^t\Lambda_0(s)( E_0[p](s)+E_1[p](s)) \ds +\int_0^t\| f(s) \|_{L^2}^2 \ds.
\end{aligned}
\end{equation}
Now, we multiply the equation \eqref{Linearized_Eq_JMGT} by $p_{tt}$ and we integrate over $\Om$ to arrive at the identity
\begin{equation}\label{dE_1_dt}
\begin{aligned} 
& \ddt E_1[p](t)+\|p_{tt} \|_{L^2}^2\\
=&\, - \intO \nabla r \cdot \nabla p p_{tt} \dx- \intO r \nabla p \cdot \nabla p_{tt} \dx +\frac{1}{2} \intO b_t \vert \nabla p_t \vert^2 \dx\\
&\,-\intO \nabla b \cdot \nabla p_t p_{tt} \dx + \intO f p_{tt} \dx.
\end{aligned}
\end{equation}
Our next goal is to estimate the right-hand side of \eqref{dE_1_dt}. 
Applying   H\"{o}lder and Young inequalities, we find
\begin{equation}
\begin{aligned}
&\ddt E_1[p](t)+\|p_{tt} \|_{L^2}^2\\
\leq &\, - \intO r \nabla p \cdot \nabla p_{tt} \dx +\frac{1}{2} \| b_t \|_{L^\infty} \Vert \nabla p_t \Vert^2_{L^2}\\
&\,+C(\varepsilon)(\| \nabla r \|^2_{L^\infty} \|\nabla p \|^2_{L^2}+\| \nabla b \|^2_{L^\infty} \|\nabla p_t \|^2_{L^2}+ \| f \|^2_{L^2}) +\varepsilon  \| p_{tt} \|^2_{L^2}.
\end{aligned}
\end{equation}
Hence, integrating in time and taking account of Assumption \ref{Assumption_1},  we have for $\varepsilon$ small enough  
\begin{equation} \label{ineq3}
\begin{aligned}
& E_1[p](t)+\int_0^t\|p_{tt}(s) \|_{ L^2}^2\ds\\
\lesssim &\,E_1[p](0) -\int_0^t \intO  r \nabla p \cdot \nabla p_{tt} \dx \ds\\
&\,+\int_0^t(\| b_t \|_{L^\infty} +\| \nabla b \|^2_{L^\infty}+\| \nabla r \|^2_{L^\infty})( E_0[p](s)+E_1[p](s)) \ds\\
&+ \int_0^t\| f(s) \|_{L^2}^2 \ds.
\end{aligned}
\end{equation}
The second term  on the right-hand side  of \eqref{ineq3} can be handled as follows
\begin{equation}
\begin{aligned}
& -\intO  r \nabla p \cdot \nabla p_{tt} \dx\\
=&\, -\ddt \left( \intO  r \nabla p \cdot \nabla p_{t} \dx\right)+\intO r_t \nabla p \cdot \nabla p_t \dx+ \| \sqrt{r} \nabla p_t \|^2_{L^2}\\
\leq &\, -\ddt \left( \intO  r \nabla p \cdot \nabla p_{t} \dx\right) +\frac{1}{2} \| r_t \|_{L^\infty}^2 \| \nabla p \|_{L^2}^2 + \frac{1}{2}\| \nabla p_t \|_{L^2}^2+ \| \sqrt{r} \nabla p_t \|^2_{L^2}.
\end{aligned}
\end{equation}
Integrating over time and exploiting   Assumption \ref{Assumption_1}, it results that
\begin{equation}
\begin{aligned}
-\int_0^t\intO  r \nabla p \cdot \nabla p_{tt} \dx \ds
\lesssim &\, \intO  \vert r(0) \nabla p_0 \cdot \nabla p_{1} \vert \dx+\intO  \vert r(t) \nabla p(t) \cdot \nabla p_{t}(t) \vert \dx\\ &\,+ \int_0^t(1+ \| r_t \|_{L^\infty}^2)(\| \nabla p \|_{L^2}^2 + \| \nabla p_t \|_{L^2}^2) \ds.
\end{aligned}
\end{equation}
Using H\"older and Young inequalities, we can estimate the first two integrals on the right-hand side of the above inequality to find 
\begin{equation} \label{ineq4}
\begin{aligned}
-\int_0^t\intO  r \nabla p \cdot \nabla p_{tt} \dx
\lesssim &\, \| \nabla p_0 \|^2_{L^2}+\| \nabla p_{1} \|^2_{L^2}\\&+ \|\nabla p(t) \|^2_{L^2}+ \varepsilon\left\| \frac{r(t)}{\sqrt{b(t)}} \right\|^2_{L^\infty} \| \sqrt{b(t)} \nabla p_{t}(t)\|^2_{L^2} \\
&\, + \int_0^t(1+ \| r_t \|_{L^\infty}^2) ( E_0[p](s)+E_1[p](s)) \ds.
\end{aligned}
\end{equation}
Plugging inequality \eqref{ineq4} into \eqref{ineq3} and taking $\varepsilon$ small enough so as to get the $\varepsilon$-term in \eqref{ineq4} absorbed by the left-hand side of \eqref{ineq3}, we obtain 
\begin{equation}  \label{ineq5}
\begin{aligned}
&E_1[p](t)+\int_0^t\|p_{tt}(s) \|_{ L^2}^2\ds\\
\lesssim &\, E_0[p](0)+E_1[p](0)+\int_0^t\Lambda_0(s)( E_0[p](s)+E_1[p](s)) \ds\\
&\,+\|\nabla p(t) \|^2_{L^2}+ \int_0^t\| f(s) \|_{L^2}^2 \ds.
\end{aligned}
\end{equation}
Observe that for $0 \leq t \leq T$, we have 
\begin{equation}\label{First_Term}
\begin{aligned}
\|\nabla p(t) \|^2_{L^2} \lesssim &\,  \|\nabla p_0 \|^2_{L^2} +\int_{0}^t \|\sqrt{b}\nabla p_t(s) \|^2_{L^2}\ds\\
\lesssim &\, E_0[p](0)+\int_{0}^t E_1[p](s)\ds. 
\end{aligned}
\end{equation}
Hence, combining \eqref{First_Term} with \eqref{ineq5}, we have  
\begin{equation} \label{ineq_5_Main}
\begin{aligned}
&E_1[p](t)+\int_0^t\|p_{tt}(s) \|_{ L^2}^2\ds\\
\lesssim &\, E_0[p](0)+E_1[p](0)
+\int_0^t\Lambda_0(s)( E_0[p](s)+E_1[p](s)) \ds+ \int_0^t\| f(s) \|_{L^2}^2 \ds.
\end{aligned}
\end{equation}
Now with the aim of absorbing  the term $\tau \| p_{tt} \|^2_{L^2}$ from the right-hand side of \eqref{ineq6},  we add $N_1 \times$\eqref{ineq_5_Main} to  \eqref{ineq6} and select  $N_1$ large enough (i.e., $\frac{N_1}{2}\gtrsim 1$), we conclude  
\begin{equation} \label{First_Energy_Estimate_Main}
\begin{aligned}
& E_0[p](t)+E_1[p](t)+\int_0^t D_0[p](s)\ds\\
\lesssim &\, E_0[p](0)+E_1[p](0)
 +\int_0^t\Lambda_0(s)( E_0[p](s)+E_1[p](s)) \ds +\int_0^t\| f(s) \|_{L^2}^2 \ds.
\end{aligned}
\end{equation}
Applying the Gronwall's inequality (Lemma \ref{Lemma_Gronwall}) yields the desired result \eqref{Main_First_Order_Est}. 
\end{proof}
\subsection{Higher-order energy estimate}
Due to the coupling between the pressure equation and the bioheat equation being via the coefficients in the former and the source term in the latter, the regularity of the acoustic pressure  guaranteed by Proposition  \ref{lemma_lower_norms} is not enough to give the temperature $\Theta$ the right smoothness so that Assumption \ref{Assumption_1} will be meaningful when we get to the study of the nonlinear problem \eqref{Main_system_JMGT}. Therefore, we need to show the existence of more regular solutions to the problem \eqref{Linearized_Eq_JMGT}. This will be done again by relying on Galerkin's smooth approximations of the solution  and on higher-order energy estimates  as in \cite{KaltenbacherNikolic}. \\
For this purpose, we define the higher-order  energy associated with the equation \eqref{Linearized_Eq_JMGT} by
\begin{equation}
\begin{aligned}  
&E[p](t):=\frac{1}{2} \big(\Vert \sqrt{b} \Delta p \Vert^2_{L^2}+\Vert \sqrt{b} \Delta p_t \Vert^2_{L^2} +\tau \Vert \nabla p_{tt} \Vert_{L^2}^2\big).
\end{aligned}
\end{equation}
We also introduce the corresponding dissipation
\begin{equation}
D[p](t):= \Vert \sqrt{r} \Delta p \Vert^2_{L^2}+\Vert  \nabla p_{tt} \Vert^2_{L^2}.
\end{equation}
%We denote by $\mathcal{X}_p^2$ the functional space defined as
%\begin{equation}\label{}
%\begin{aligned}
%\mathcal{X}_p^2:=&\,\Big\{p \in L^{\infty}(0, T; H^2( \Om) \cap H^1_0(\Om)), p_t \in L^{\infty}(0, T; H^2( \Om) \cap H^1_0(\Om)),\\
%& \quad p_{tt} \in L^{\infty}(0, T; H^1_0(\Om)), p_{ttt} \in L^{\infty}(0, T; L^2(\Om)) \Big\}.
%\end{aligned}
%\end{equation}
We prove the following existence result.
\begin{proposition} \label{prop1}
Let $\tau >0$ and $T>0$. Assuming that $f \in L^2(0, T; H^1_0(\Om))$  and that  Assumption \ref{Assumption_1} holds.  Suppose that   
\begin{equation}
(p_0, p_1, p_2) \in \big[H^2( \Om) \cap H^1_0(\Om)\big] \times \big[H^2( \Om) \cap H^1_0(\Om)\big] \times H^1_0(\Om). 
\end{equation}
Then,  the system \eqref{Linearized_Eq_JMGT} admits a unique solution $p \in \mathcal{X}_p^2$ (see \eqref{spaces}). Moreover, there exists a constant $C_{p, 2}(\tau)>0$,  that does not depend on $T$, such that the solution satisfies the following estimate for $0 \leq t \leq T$ 
\begin{equation} \label{main_estimate_E_p}
\begin{aligned}
& E[p](t)+\int_0^t D[p](s) \ds \\
\leq  &\,  C_{p, 2} \Big[E[p](0)\exp\Big(\int_0^t \Lambda(s) \ds \Big)+ \int_0^t F(s) \exp\Big(\int_s^t \Lambda(r) \textup{d} r \Big) \ds \Big]
\end{aligned}
\end{equation}
where $\Lambda$ is defined as 
\begin{equation} \label{lambda}
\Lambda(t)= 1+ \Vert b_t (t) \Vert_{L^\infty}+\Vert b_t(t) \Vert_{L^\infty}^2+ \Vert r_t(t) \Vert_{L^\infty}^2
\end{equation}
and 
$$F(t)= \Vert f(t) \Vert^2_{L^2}+\Vert \nabla f(t) \Vert^2_{L^2}.$$
\end{proposition}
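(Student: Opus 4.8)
The plan is to obtain solutions by the Faedo--Galerkin method on the span of the first $N$ eigenfunctions of $-\Delta$ with homogeneous Dirichlet data, exactly as for the JMGT equation in \cite{KaltenbacherNikolic, kaltenbacher2019vanishing}. Since the construction, the weak-$*$ passage to the limit, and uniqueness (which follows from linearity by applying the estimate below to the difference of two solutions) are by now classical, I will concentrate on the a priori bound \eqref{main_estimate_E_p}; it is performed on the smooth approximations, where all multiplications and integrations by parts are legitimate, and is inherited in the limit. Everything hinges on two multipliers, $-\Delta p_{tt}$ and $-\Delta p$. One point must be stressed: I do \emph{not} integrate by parts in space in the terms $r\Delta p$ and $b\Delta p_t$; keeping them as they stand is precisely what keeps $\nabla r,\nabla b$ out of the estimate and leaves only $r_t,b_t\in L^\infty$, explaining the shape of $\Lambda$ in \eqref{lambda}.

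Testing \eqref{Linearized_Eq_JMGT} with $-\Delta p_{tt}$ and integrating over $\Om$, after the spatial integrations by parts in $\tau p_{ttt}$, $p_{tt}$, $f$ and a \emph{time} integration by parts in $\int_\Om r\Delta p\,\Delta p_{tt}\dx$ (this is where $r_t\in L^\infty$ is used, and it generates $\int_\Om r_t\Delta p\,\Delta p_t\dx$ and $-\|\sqrt r\Delta p_t\|_{L^2}^2$), one reaches a differential identity whose leading part is $\ddt\big(\tfrac\tau2\|\nabla p_{tt}\|_{L^2}^2+\tfrac12\|\sqrt b\Delta p_t\|_{L^2}^2+\int_\Om r\Delta p\,\Delta p_t\dx\big)+\|\nabla p_{tt}\|_{L^2}^2$. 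Testing with $-\Delta p$, with spatial integrations by parts only in $\tau p_{ttt}$ and $p_{tt}$, yields an identity whose leading part is $\ddt\big(\tfrac12\|\sqrt b\Delta p\|_{L^2}^2+\tau\!\int_\Om\!\nabla p_{tt}\cdot\nabla p\dx-\tfrac\tau2\|\nabla p_t\|_{L^2}^2+\int_\Om\!\nabla p_t\cdot\nabla p\dx\big)+\|\sqrt r\Delta p\|_{L^2}^2$. Adding the two identities, the bracketed quantities assemble into
\begin{equation*}
\ddt\big(E[p](t)+R(t)\big)+D[p](t)=\mathcal{R}_0(t),
\end{equation*}
where $R(t)$ collects the cross terms $\tau\!\int\!\nabla p_{tt}\cdot\nabla p$, $-\tfrac\tau2\|\nabla p_t\|_{L^2}^2$, $\int\!\nabla p_t\cdot\nabla p$, $\int r\Delta p\,\Delta p_t$, while $\mathcal{R}_0$ consists of $\|\nabla p_t\|_{L^2}^2$, $\tfrac12\int b_t|\Delta p|^2$, $\int r_t\Delta p\,\Delta p_t$, $\|\sqrt r\Delta p_t\|_{L^2}^2$, $\tfrac12\int b_t|\Delta p_t|^2$ and $\int\nabla f\cdot(\nabla p+\nabla p_{tt})$ — this last term being the reason $f\in L^2(0,T;H^1_0(\Om))$ is required.

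Integrating in time, $R(0)$ is bounded by $C(\tau)E[p](0)$, while $R(t)$ is estimated by Young's inequality with a deliberate allocation of constants: the top-order pieces $\|\nabla p_{tt}(t)\|_{L^2}^2$ and $\|\Delta p_t(t)\|_{L^2}^2$ are given an arbitrarily small factor and absorbed into $E[p](t)$ on the left, whereas the large constants are placed on $\|\Delta p(t)\|_{L^2}^2$, $\|\nabla p(t)\|_{L^2}^2$, $\|\nabla p_t(t)\|_{L^2}^2$, which are controlled through the fundamental theorem of calculus, $\|\Delta p(t)\|_{L^2}^2\lesssim\|\Delta p_0\|_{L^2}^2+\int_0^t\big(\|\Delta p\|_{L^2}^2+\|\Delta p_t\|_{L^2}^2\big)\ds$ and similarly for $\|\nabla p(t)\|_{L^2}^2,\|\nabla p_t(t)\|_{L^2}^2$ (in the spirit of \eqref{First_Term}); since $\|\Delta p\|_{L^2}^2,\|\Delta p_t\|_{L^2}^2,\|\nabla p_{tt}\|_{L^2}^2\lesssim E[p]$ by the coercivity of $r,b$, these residues reduce to $E[p](0)+\int_0^t E[p]\ds$. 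Treating $\mathcal{R}_0$ the same way, using $\|r_t\|_{L^\infty},\|b_t\|_{L^\infty}\lesssim\Lambda$, the same energy comparisons, and Young's inequality on $\int\nabla f\cdot\nabla p_{tt}$ (the $\varepsilon\|\nabla p_{tt}\|_{L^2}^2$ absorbed into $\int_0^t D[p]$, the $\|\nabla f\|_{L^2}^2$ kept), one arrives at $E[p](t)+\int_0^t D[p](s)\ds\lesssim C(\tau)\big(E[p](0)+\int_0^t\Lambda(s)E[p](s)\ds+\int_0^t F(s)\ds\big)$, and Gronwall's inequality (Lemma \ref{Lemma_Gronwall}) delivers \eqref{main_estimate_E_p}.

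The hard part is exactly the remainder $R(t)$, and inside it the term $\int_\Om r\Delta p\,\Delta p_{tt}\dx$. In Proposition \ref{lemma_lower_norms} the residue left after the time integration by parts, $\|\nabla p(t)\|_{L^2}^2$, sits strictly below the energy level and is routed into the Gronwall term effortlessly via \eqref{First_Term}; here the analogous residue $\|\Delta p(t)\|_{L^2}^2$ lies at the very spatial order of the leading term of $E[p]$, so it cannot be absorbed into $E[p](t)$ with a small constant. What makes the scheme close is that this residue appears — through Young's inequality in $\int r\Delta p\,\Delta p_t$ — paired with the genuinely higher quantity $\|\Delta p_t(t)\|_{L^2}^2$, which \emph{can} be absorbed into $E[p](t)$; one therefore shifts the large constant onto $\|\Delta p(t)\|_{L^2}^2$ and disposes of it by the fundamental theorem of calculus. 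Carrying out this allocation consistently across both multipliers, together with the observation that the time integration by parts costs only $r_t\in L^\infty$, is the core of the argument.
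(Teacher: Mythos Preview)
Your proof is correct and follows essentially the same strategy as the paper: the two multipliers $-\Delta p$ and $-\Delta p_{tt}$, the time integration by parts in $\int_\Omega r\,\Delta p\,\Delta p_{tt}\dx$ (so that only $r_t,b_t\in L^\infty$ enter, not $\nabla r,\nabla b$), and the disposal of the top-order residue $\|\Delta p(t)\|_{L^2}^2$ via the fundamental theorem of calculus before Gronwall. The only organizational difference is that the paper treats the two multiplier identities separately, integrates each in time, and then forms the linear combination $\lambda_0\times\eqref{estimate_delta_p_1}+\eqref{estimate_delta_p_t_2}$ with a small weight $\lambda_0$ to absorb the residual $\tau^2\|\nabla p_{tt}(t)\|_{L^2}^2$, whereas you add the identities with equal weight first and absorb the analogous cross term $\tau\int_\Omega\nabla p_{tt}\cdot\nabla p\,\dx$ through the $\varepsilon$ in Young's inequality---this is the same mechanism in a slightly different packaging.
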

\begin{proof} 
We begin by multiplying the equation \eqref{Linearized_Eq_JMGT} by $-\Delta p$, and integrating over $\Om$, using, integration by parts, we  find 
\begin{equation} \label{identity_3}
\begin{aligned}
\frac{1}{2} \ddt \Vert \sqrt{b}\Delta p \Vert^2_{L^2}+\Vert \sqrt{r} \Delta p \Vert^2_{L^2}=&\, \intO (p_{tt}-f)  \Delta p \dx+ \ddt \Big( \intO  \tau p_{tt} \Delta p \dx \Big)\\
&-\intO \tau p_{tt} \Delta p_t \dx+\intO b_t \vert \Delta p \vert^2\dx, 
\end{aligned}
\end{equation}
where we have used the identity
\begin{equation}
\intO \tau p_{ttt} \Delta p \dx= \ddt \big( \intO  \tau p_{tt} \Delta p \dx \big)-\intO \tau p_{tt} \Delta p_t \dx.
\end{equation}
In order to estimate the right-hand side of \eqref{identity_3}, we first use H\"{o}lder and Poincar\'{e} inequalities to obtain
\begin{equation} 
\begin{aligned}
&\frac{1}{2} \ddt \Vert \sqrt{b}\Delta p \Vert^2_{L^2}+\Vert \sqrt{r} \Delta p \Vert^2_{L^2}\\
\leq &\,  \ddt \Big( \intO  \tau p_{tt} \Delta p \dx \Big)+ \big(\Vert \nabla  p_{tt} \Vert_{L^2}+ \Vert b_t \Vert_{L^\infty}  \Vert \Delta p \Vert_{L^2}+ \Vert f  \Vert_{L^2} \big) \Vert  \Delta p \Vert_{L^2}\\
&+\tau \Vert \nabla p_{tt} \Vert_{L^2} \Vert  \Delta p_t \Vert_{L^2}. 
\end{aligned}
\end{equation}
From here, by applying Young's inequality, we find
\begin{equation} 
\begin{aligned}
&\frac{1}{2} \ddt \Vert \sqrt{b}\Delta p \Vert^2_{L^2}+\Vert \sqrt{r} \Delta p \Vert^2_{L^2}\\
\leq &\,   \ddt \Big( \intO  \tau p_{tt} \Delta p \dx \Big)+C(\varepsilon) \Big(\Vert \nabla  p_{tt} \Vert_{L^2}^2+\Vert b_t \Vert_{L^\infty}^2 \Big\Vert \frac{1}{\sqrt{b}} \Big\Vert^2_{L^\infty}  \Vert \sqrt{b}\Delta p \Vert^2_{L^2}+\Vert f  \Vert_{L^2}^2 \Big)\\
&+\varepsilon
\Big\Vert \frac{1}{\sqrt{r}} \Big\Vert^2_{L^\infty}\Vert \sqrt{r} \Delta p \Vert_{L^2}^2 + \frac{\tau}{2} \Big\Vert \frac{1}{\sqrt{b}} \Big\Vert^2_{L^\infty}  \Vert \sqrt{b} \Delta p_t \Vert_{L^2}^2+ \frac{\tau}{2} \Vert \nabla p_{tt} \Vert_{L^2}^2 . 
\end{aligned}
\end{equation}
By selecting $\varepsilon $ small enough,  the $\varepsilon $-term on the right-hand side  can be absorbed by  the dissipative term on the left-hand side. Then, on account of Assumption \ref{Assumption_1}, we have 
\begin{equation} 
\begin{aligned}
&\frac{1}{2} \ddt \Vert \sqrt{b}\Delta p(t) \Vert^2_{L^2}+\Vert \sqrt{r} \Delta p(t) \Vert^2_{L^2} \\
\lesssim &\, \ddt \big( \intO  \tau p_{tt} \Delta p \dx \big)+\big( 1+ \frac{1}{\tau}+ \tau+\Vert b_t \Vert_{L^\infty}^2 \big)E[p](t)+\Vert f (t) \Vert_{L^2}^2.
\end{aligned}
\end{equation}
Integrating from $0$ to $t$ yields the following bound for $0 \leq t \leq T$
\begin{equation} \label{estimate_delta_p} 
\begin{aligned}
&\frac{1}{2}  \Vert \sqrt{b(t)}\Delta p(t) \Vert^2_{L^2}+ \int_0^t\Vert \sqrt{r(s)} \Delta p(s) \Vert^2_{L^2} \ds\\
\lesssim &\,  \intO  \tau \vert p_{tt}(t) \Delta p(t) \vert \dx+\intO  \tau \vert p_{2} \Delta p_0 \vert \dx + \Vert \sqrt{b(0)}\Delta p_0 \Vert^2_{L^2}\\
&\,+\int_0^t \big( 1+ \tau+ \frac{1}{\tau}+\Vert b_t(s) \Vert_{L^\infty}^2\big)E[p](s) \ds+ \int_0^t \Vert f(s) \Vert_{ L^2}^2 \ds.
\end{aligned}
\end{equation}
We can use H\"{o}lder and Young inequalities to bound the first two terms on the right-hand side as follows:
\begin{equation} \label{est_1} 
\begin{aligned}
&\,\intO  \tau \vert p_{tt}(t) \Delta p(t) \vert \dx+\intO  \tau \vert p_{2} \Delta p_0 \vert \dx \\
\leq &\, C(\varepsilon) \tau^2\| p_{tt}(t) \|_{L^2}^2+ \varepsilon \Big\Vert \frac{1}{\sqrt{b}} \Big\Vert^2_{L^\infty}\| \sqrt{b} \Delta p(t) \|_{L^2}^2 + \frac{\tau}{2} \|p_{2} \|_{L^2}^2\\
&+ \frac{\tau}{2} \Big\Vert \frac{1}{\sqrt{b}} \Big\Vert^2_{L^\infty} \| \sqrt{b(0)}\Delta p_0 \|_{L^2}^2.
\end{aligned}
\end{equation}
Due to Assumption \ref{Assumption_1}, we can combine \eqref{estimate_delta_p} and \eqref{est_1}  and use Poincar\'{e}'s inequality to obtain, for small enough $\varepsilon$, the following bound
\begin{equation} \label{estimate_delta_p_1} 
\begin{aligned}
&\frac{1}{2}  \Vert \sqrt{b(t)}\Delta p(t) \Vert^2_{L^2}+\int_0^t\Vert \sqrt{r(s)} \Delta p(s) \Vert^2_{L^2} \ds\\
\lesssim &\, (1+\tau)  \Vert \sqrt{b(0)}\Delta p_0 \Vert^2_{L^2}+ \tau \|\nabla p_{2} \|_{L^2}^2+\tau^2\| \nabla p_{tt}(t) \|_{L^2}^2\\
&\,+\int_0^t \big( 1+ \tau+ \frac{1}{\tau}+\Vert b_t(s) \Vert_{L^\infty}^2\big)E[p](s) \ds+ \int_0^t \Vert f(s) \Vert_{ L^2}^2 \ds\\
\lesssim &\, (1+\tau) E[p](0) +\tau^2\| \nabla p_{tt}(t) \|_{L^2}^2+\int_0^t \big( 1+ \tau+ \frac{1}{\tau}+\Vert b_t(s) \Vert_{L^\infty}^2\big)E[p](s) \ds\\
&+ \int_0^t \Vert f(s) \Vert_{ L^2}^2 \ds.
\end{aligned}
\end{equation}
Next, we multiply the equation \eqref{Linearized_Eq_JMGT} by $-\Delta p_{tt}$ and we integrate over $\Om$ to get
\begin{equation}
\begin{aligned}
& \frac{1}{2} \ddt \big( \Vert \sqrt{\tau}\nabla p_{tt} \Vert^2_{L^2}+ \Vert \sqrt{b} \Delta p_{t} \Vert^2_{L^2} \big)+ \Vert \nabla p_{tt} \Vert^2_{L^2}\\
%=&\, -\intO (r \Delta p+b \Delta p_t) \Delta p_{tt} \dx - \intO f \Delta p_{tt} \dx\\
=&\, -\ddt \Big(\intO r \Delta p \Delta p_t \dx \Big) +\intO r_t \Delta p \Delta p_t \dx+ \intO r \vert \Delta p_t \vert^2 \dx\\
&+ \frac{1}{2} \intO b_t \vert \Delta p_t \vert^2 \dx+\intO \nabla f \cdot \nabla p_{tt} \dx 
\end{aligned}
\end{equation}
where we have used the identity  
\begin{equation} \label{f}
\begin{aligned}
%&-\intO f \Delta p_{tt} \dx = -\ddt \intO f \Delta p_t \dx +\intO %f_t \Delta p_t \dx,\\
&-\intO r \Delta p \Delta p_{tt} \dx=-\ddt \Big(\intO r \Delta p \Delta p_t \dx \Big) +\intO r_t \Delta p \Delta p_t \dx+ \intO r \vert \Delta p_t \vert^2 \dx.
\end{aligned}
\end{equation}  
Applying H\"{o}lder's inequality gives 
\begin{equation}
\begin{aligned}
& \frac{1}{2} \ddt \big( \Vert \sqrt{\tau}\nabla p_{tt} \Vert^2_{L^2}+ \Vert \sqrt{b} \Delta p_{t} \Vert^2_{L^2} \big)+ \Vert \nabla p_{tt} \Vert^2_{L^2}\\
\leq &\, \Vert r_t \Vert_{L^\infty} \Vert \Delta p \Vert_{L^2} \Vert  \Delta p_t \Vert_{L^2}+ \Vert r  \Vert_{L^\infty} \Vert \Delta p_t \Vert^2_{L^2}+ \frac{1}{2} \Vert b_t \Vert_{L^\infty} \Vert \Delta p_t \Vert^2_{L^2}\\
&+\Vert \nabla f \Vert_{L^2}  \Vert \nabla  p_{tt} \Vert_{L^2}-\ddt \Big(\intO r \Delta p \Delta p_t \dx \Big).
\end{aligned}
\end{equation}
Further, making use of Young's inequality yields
\begin{equation} 
\begin{aligned}
& \frac{1}{2} \ddt \big( \Vert \sqrt{\tau}\nabla p_{tt} \Vert^2_{L^2}+ \Vert \sqrt{b} \Delta p_{t} \Vert^2_{L^2} \big)+ \Vert \nabla p_{tt} \Vert^2_{L^2}\\
\lesssim &\, \Big\Vert \frac{1}{\sqrt{b}} \Big\Vert^2_{L^\infty}\Big(1+\Vert r  \Vert_{L^\infty}+\Vert b_t \Vert_{L^\infty}+\Vert r_t \Vert_{L^\infty}^2 \Big)  \Big(\Vert \sqrt{b} \Delta p \Vert_{L^2}^2+\Vert \sqrt{b} \Delta p_t \Vert^2_{L^2}\Big)\\
&\,  +\Vert \nabla f \Vert_{L^2}^2 +\varepsilon \Vert \nabla p_{tt} \Vert^2_{L^2}-\ddt \Big(\intO r \Delta p \Delta p_t \dx \Big).
\end{aligned}
\end{equation}
The $\varepsilon$-term on the right can be absorbed by the dissipation on the left-hand side for suitably small $\varepsilon$. Thus, recalling Assumption \ref{Assumption_1}, it follows that
\begin{equation} 
\begin{aligned}
& \frac{1}{2} \ddt \big( \Vert \sqrt{\tau}\nabla p_{tt}(t) \Vert^2_{L^2}+ \Vert \sqrt{b} \Delta p_{t}(t) \Vert^2_{L^2} \big)+ \Vert \nabla p_{tt}(t) \Vert^2_{L^2}\\
\lesssim &\,\big(1+\Vert b_t \Vert_{L^\infty}+\Vert r_t \Vert_{L^\infty}^2 \big)  E[p](t)+\Vert \nabla f (t) \Vert_{L^2}^2\\
&\, -\ddt \Big(\intO r \Delta p \Delta p_t \dx \Big).
\end{aligned}
\end{equation}
To derive abound for the last term on the right-hand side of the above estimate, we first integrate in time to get for all $0 \leq t \leq T$  
\begin{equation} \label{estimate_delta_p_t}
\begin{aligned}
& \frac{\tau}{2} \Vert \nabla p_{tt}(t) \Vert^2_{L^2}+\frac{1}{2} \Vert \sqrt{b(t)} \Delta p_{t}(t) \Vert^2_{L^2}+  \int_0^t \Vert \nabla p_{tt}(s) \Vert^2_{ L^2} \ds\\
\lesssim &\, \frac{\tau}{2} \Vert \nabla p_{2} \Vert^2_{L^2}+\frac{1}{2} \Vert \sqrt{b(0)} \Delta p_{1} \Vert^2_{L^2} +\int_0^t \big(1+ \Vert b_t(s) \Vert_{L^\infty}+ \Vert r_t (s) \Vert_{L^\infty}^2 \big)E[p](s) \ds\\
&\,+\int_0^t \Vert \nabla f(s) \Vert_{ L^2}^2 \ds +\intO \vert r(t) \Delta p(t) \Delta p_t(t) \vert \dx +\intO \vert r(0) \Delta p_0 \Delta p_1 \vert \dx.
\end{aligned}
\end{equation}
With the help of H\"{o}lder and Young inequalities together with Assumption \ref{Assumption_1}, we can show that for all $0 \leq t \leq T$, the following estimate holds 
\begin{equation} \label{ineq_1}
\begin{aligned}
&\intO \vert r(t) \Delta p(t) \Delta p_t(t) \vert \dx +\intO \vert r(0) \Delta p_0 \Delta p_1 \vert \dx\\   \leq  &\, \Big \Vert \frac{1}{\sqrt{b}} \Big \Vert_{L^\infty L^\infty}\Vert r(t) \Vert_{L^\infty} \Vert  \Delta p(t) \Vert_{L^2} \Vert \sqrt{b} \Delta p_t(t) \Vert_{L^2} \\
&\, + \Big\Vert \frac{1}{b(0)} \Big\Vert_{ L^\infty}\Vert r(0) \Vert_{L^\infty}\Vert \sqrt{b(0)} \Delta p_0 \Vert_{L^2} \Vert \sqrt{b(0)} \Delta p_1 \Vert_{L^2}\\
 \leq &\, C(\varepsilon)\Vert   \Delta p(t) \Vert_{L^2}^2+\varepsilon \Vert r \Vert_{L^\infty L^\infty}^2\Big\Vert \frac{1}{\sqrt{b}} \Big\Vert^2_{L^\infty L^\infty} \Vert \sqrt{b(t)} \Delta p_t(t) \Vert_{L^2}^2\\
&\, + \frac{1}{2}\Big\Vert \frac{1}{b} \Big\Vert_{L^\infty L^\infty}\Vert r \Vert_{L^\infty L^\infty}\big(\Vert \sqrt{b(0)} \Delta p_0 \Vert_{L^2}^2+\Vert \sqrt{b(0)} \Delta p_1 \Vert_{L^2}^2\big).
\end{aligned}
\end{equation}
By putting together the estimates \eqref{ineq_1},  \eqref{estimate_delta_p_t} and keeping in mind Assumption \ref{Assumption_1}, we choose $\varepsilon$ in \eqref{ineq_1}  suitably small so that the corresponding term will be absorbed by the left-hand side of \eqref{estimate_delta_p_t}. Thus, it results that
\begin{equation} \label{estimate_delta_p_t_1}
\begin{aligned}
& \frac{\tau}{2} \Vert \nabla p_{tt}(t) \Vert^2_{L^2}+\frac{1}{4} \Vert \sqrt{b(t)} \Delta p_{t}(t) \Vert^2_{L^2}+ \int_0^t \Vert \nabla p_{tt}(s) \Vert^2_{L^2} \ds\\
\lesssim &\, E[p](0)+\int_0^t (1+ \Vert b_t(s) \Vert_{L^\infty}+ \Vert r_t (s) \Vert_{L^\infty}^2)E[p](s) \ds+\Vert \Delta p(t) \Vert_{L^2}^2\\
&+\int_0^t \big(\Vert f(s) \Vert_{L^2 L^2}^2+ \Vert \nabla f(s) \Vert_{L^2}^2 \big) \ds.
\end{aligned}
\end{equation}
Observe that we have
\begin{equation}
\begin{aligned}
\Vert  \Delta p(t) \Vert_{L^2}^2 \lesssim&\,\Vert  \Delta p_0 \Vert_{L^2}^2+\int_0^t \Vert  \Delta p_t(s) \Vert_{L^2}^2 \ds\\
\lesssim&\,\Vert  \Delta p_0 \Vert_{L^2}^2+\Big \|\frac{1}{\sqrt{b}}\Big\|_{L^\infty}\int_0^t \Vert \sqrt{b} \Delta p_t(s) \Vert_{L^2}^2 \ds\\
\end{aligned}
\end{equation}
which implies according to Assumption \ref{Assumption_1}
\begin{equation}
\begin{aligned}
\Vert  \Delta p(t) \Vert_{L^2}^2 \lesssim&\, E[p](0)+\int_0^t E[p](s) \ds.
\end{aligned}
\end{equation}
Thus, the estimate \eqref{estimate_delta_p_t_1} becomes 
\begin{equation} \label{estimate_delta_p_t_2}
\begin{aligned}
& \frac{\tau}{2} \Vert \nabla p_{tt}(t) \Vert^2_{L^2}+\frac{1}{4} \Vert \sqrt{b(t)} \Delta p_{t}(t) \Vert^2_{L^2}+ \int_0^t \Vert \nabla p_{tt}(s) \Vert^2_{L^2} \ds\\
\lesssim &\, E[p](0)+\int_0^t \big(1+ \Vert b_t(s) \Vert_{L^\infty}+ \Vert r_t (s) \Vert_{L^\infty}^2\big)E[p](s) \ds\\
&+\int_0^t \big(\Vert f(s) \Vert_{L^2 L^2}^2+ \Vert \nabla f(s) \Vert_{L^2}^2 \big) \ds.
\end{aligned}
\end{equation}
At this point, we can add up the estimates $\lambda_0 \times$\eqref{estimate_delta_p_1} and  \eqref{estimate_delta_p_t_2} where $\lambda_0>0$ is suitably selected in order to  absorb  the term $\tau^2 \Vert \nabla p_{tt}(t) \Vert_{L^2}^2$ from the right-hand side of \eqref{estimate_delta_p_1}. This leads to the estimate 
\begin{equation} \label{interm_estimate}  
\begin{aligned}
&  E[p](0)+ \int_0^t D[p](s) \ds \\
\lesssim &\, E[p](0)+\int_0^t (1 +\Vert b_t(s) \Vert_{L^\infty}+\Vert b_t(s) \Vert_{L^\infty}^2+ \Vert r_t (s) \Vert_{L^\infty}^2)E[p](s) \ds\\
& +\int_0^t \big(\Vert f(s) \Vert_{L^2 L^2}^2+ \Vert \nabla f(s) \Vert_{L^2}^2 \big) \ds.
\end{aligned}
\end{equation}
The hidden constant in the above estimate depends on $\tau$ but not on $T$. Therefore, we arrive at the estimate \eqref{main_estimate_E_p} by applying Gronwall's inequality to \eqref{interm_estimate}. This ends the proof of Proposition \eqref{prop1}.
\end{proof}

\section{Analysis of the bioheat equation}\label{Section_Pennes_Equation}
	We focus in this section on establishing \textit{a priori} estimates for the solution of the parabolic Pennes equation:    
 	\begin{equation}\label{Heat_Linearized}
	\left\{
	\begin{aligned} 
&m \Theta_t -\kappaa \Delta \Theta+\ell \Theta = g(x,t),\qquad (x,t) \in \Om \times (0, T),\\
&\Theta\vert_{\partial \Om}=0, \\
&\Theta \vert_{t=0}=\Theta_0. 
\end{aligned}
\right. 
\end{equation}
	When formulating these estimates, our aim is   to assign the minimal regularity to the source term $g$. This is crucial for the nonlinear problem \eqref{Main_system_JMGT}, where $g$ serves as a placeholder for the source term $\Phi(\Theta, p_t)$, given the limited regularity available for the variable $p$. In addition, one can expect that $\Theta \in L^\infty(0, T; L^\infty(\Om))$ would be enough to prevent the degeneracy of the JMGT equation as was shown in the previous result \cite{Nikolic_2022}. However, here   
the sort of coupling that we have in \eqref{Main_system_JMGT}  together with the hyperbolic character of the JMGT equation and the fact that the variable coefficients are associated with the higher-order derivatives in the pressure equation suggest that we need solutions to \eqref{Heat_Linearized} with more regularity than $\Theta \in L^\infty(0, T; H^2(\Om))$. 
To accomplish this, we require the following assumptions on the initial data and the source term in \eqref{Heat_Linearized}.
\begin{assumption} \label{Assumption_2}
Assume that
\begin{itemize}
\item $g \in H^1(0, T; H^1_0(\Om))$.
\item The initial data satisfy the compatibility condition
\begin{equation}\label{theta_0} 
\Theta_0 \in H^3(\Om) \cap H^1_0(\Om), \qquad \Theta_1:= \Theta_t(x, 0) \in H^1_0(\Om),
\end{equation}
such that $\Theta_1$ is defined as 
\begin{equation} \label{theta_1} 
\Theta_1=\frac{1}{m} \big(\kappaa \Delta \Theta_0 -\ell \Theta_0+g(x, 0)\big),
\end{equation}
with 
\begin{equation}
g(0)= \Phi(\Theta_0, p_1)= \phi(\Theta_0) (p_1)^2
\end{equation}
where $\phi$ is defined as in \eqref{funct_k}.
\end{itemize}
\end{assumption} 
%Let $\mathcal{X}_\Theta^2$ denote the functional space 
%\begin{equation}
%\begin{aligned}
%\mathcal{X}_\Theta=\big\{ & \Theta \in L^2(0, T; H^3(\Om)\cap  H^1_0(\Om) ),\\
%&\, \Theta_t \in L^\infty(0, T; H^1_0(\Om)) \cap L^2(0, T; H^2(\Om)\cap H^1_0(\Om)),\\
%&\, \Theta_{tt} \in L^2(0, T; L^2(\Om)) \big\}.
%\end{aligned}
%\end{equation}
By relying again on the Faedo-Galerkin approach, we prove the next result. We only emphasize the portion of the proof leading to the energy bound \eqref{Regularity_H^2_Heat} for the smooth approximations in space of the solutions (in terms of the eigenfunctions of the Dirichlet-Laplacian) and we skip the other steps that are standard in the context of the heat equation. The reader interested in more details is referred to \cite[Chapter 7]{evans2010partial}.
\begin{proposition} \label{prop2}
Given $T>0$. Let $\Theta_0,g$ satisfy Assumption \ref{Assumption_2}. Then the problem \eqref{Heat_Linearized} admits a unique solution $\Theta \in \mathcal{X}_\Theta^2$ (cf. \eqref{spaces}). Further, there exists a constant $C_{\Theta, 2}>0$, independent of $T$, such that  
\begin{equation}\label{Regularity_H^2_Heat}
\begin{aligned}
&\| \Theta \|_{L^2 H^3}^2+\| \Theta \|_{L^\infty H^2}^2+\| \Theta_t \|_{L^\infty H^1}^2+\| \Theta_t \|_{L^2 H^2}^2+\Vert \Theta_{tt} \|_{L^2 L^2}^2\\
\leq & \,C_{\Theta, 2} \big( \| \Theta_0 \|_{H^3}^2+\| \nabla g(0) \|_{ L^2 }^2+\| g \|_{H^1 L^2}^2+\| g \|_{ L^2 H^1 }^2 \big).
\end{aligned}
\end{equation}
\end{proposition}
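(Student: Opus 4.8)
The plan is to establish the bound \eqref{Regularity_H^2_Heat} first for the smooth Faedo--Galerkin approximations $\Theta^n$ of \eqref{Heat_Linearized} built from the eigenbasis $\{e_j\}$ of the Dirichlet--Laplacian on $\Om$, and then pass to the limit; the limit passage (weak-$*$ compactness in the spaces defining $\mathcal{X}_\Theta^2$, together with lower semicontinuity of norms and the standard identification of the limit as a solution) and uniqueness (by linearity, testing the difference of two solutions against itself) are routine and will only be sketched. The decisive structural point is that the orthogonal projection $P_n$ onto $\mathrm{span}\{e_1,\dots,e_n\}$ commutes with $\Delta$ and is simultaneously orthogonal in $L^2(\Om)$, $H^1_0(\Om)$ and $H^3(\Om)$; hence the Galerkin analogue
\[
\Theta_1^n:=\Theta^n_t(0)=\tfrac1m\bigl(\kappaa\Delta P_n\Theta_0-\ell P_n\Theta_0+P_ng(0)\bigr)
\]
of the compatibility datum \eqref{theta_1} satisfies $\|\Theta_1^n\|_{H^1}\le\|\Theta_1\|_{H^1}\lesssim\|\Theta_0\|_{H^3}+\|\nabla g(0)\|_{L^2}$ uniformly in $n$. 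In the energy estimates below I drop the superscript $n$.

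I would derive the estimate by climbing a short ladder of tests, all producing constants independent of $T$ because the homogeneous Dirichlet condition makes $-\kappaa\Delta$ coercive (Poincar\'e). Testing \eqref{Heat_Linearized} with $\Theta$ and then with $-\Delta\Theta$ (legitimate since $\Theta(t)\in H^1_0(\Om)$), using $\ell\ge 0$, Young's inequality to absorb $\|\Delta\Theta\|_{L^2}^2$, and Gronwall's inequality (Lemma \ref{Lemma_Gronwall}), yields
\[
\|\Theta\|_{L^\infty L^2}^2+\|\Theta\|_{L^2 H^1}^2+\|\Theta\|_{L^\infty H^1}^2+\|\Theta\|_{L^2 H^2}^2\lesssim\|\Theta_0\|_{H^1}^2+\|g\|_{L^2 L^2}^2,
\]
the $H^2$-bound via elliptic regularity ($\|v\|_{H^2}\lesssim\|\Delta v\|_{L^2}$ for $v\in H^2\cap H^1_0$, using $\partial\Om\in C^3$). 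Since $g\in H^1(0,T;H^1_0(\Om))$, the Galerkin ODE system may be differentiated in time, so $w:=\Theta_t$ solves $mw_t-\kappaa\Delta w+\ell w=g_t$ with $w(0)=\Theta_1^n$; testing this equation with $-\Delta w$ and repeating the argument gives
\begin{align*}
\|\Theta_t\|_{L^\infty H^1}^2+\|\Theta_t\|_{L^2 H^2}^2
&\lesssim\|\Theta_1\|_{H^1}^2+\|g_t\|_{L^2 L^2}^2\\
&\lesssim\|\Theta_0\|_{H^3}^2+\|\nabla g(0)\|_{L^2}^2+\|g\|_{H^1 L^2}^2 .
\end{align*}

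The three remaining norms in \eqref{Regularity_H^2_Heat} are then read off algebraically from the equation. Rewriting \eqref{Heat_Linearized} as $\kappaa\Delta\Theta=m\Theta_t+\ell\Theta-g$ and taking $L^2(\Om)$-norms pointwise in $t$ gives $\|\Theta\|_{L^\infty H^2}^2\lesssim\|\Theta_t\|_{L^\infty L^2}^2+\|\Theta\|_{L^\infty L^2}^2+\|g\|_{L^\infty L^2}^2$, where $\|g\|_{L^\infty L^2}^2\lesssim\|\nabla g(0)\|_{L^2}^2+\|g\|_{H^1 L^2}^2$ by the fundamental theorem of calculus and Poincar\'e; taking the gradient of the same identity and then the $L^2(0,T;L^2(\Om))$-norm, and using $g\in L^2(0,T;H^1(\Om))$ together with elliptic $H^3$-regularity, gives $\|\Theta\|_{L^2 H^3}^2\lesssim\|\Theta_t\|_{L^2 H^1}^2+\|\Theta\|_{L^2 H^1}^2+\|g\|_{L^2 H^1}^2$; finally, from $m\Theta_{tt}=\kappaa\Delta\Theta_t-\ell\Theta_t+g_t$ one obtains $\|\Theta_{tt}\|_{L^2 L^2}^2\lesssim\|\Theta_t\|_{L^2 H^2}^2+\|g_t\|_{L^2 L^2}^2$. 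Summing the displayed families of bounds proves \eqref{Regularity_H^2_Heat} for $\Theta^n$ with a constant independent of $n$ and $T$, and then we pass to the limit.

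Rather than a genuine obstruction (the equation being linear and parabolic), the points needing care are the following. First, the $L^2(0,T;H^3(\Om))$-regularity of $\Theta$ cannot be obtained from a spatial test alone under the minimal hypothesis $g\in L^2(0,T;H^1(\Om))$: one is forced to differentiate the equation in time, which is precisely why the compatibility condition $\Theta_1\in H^1_0(\Om)$ of Assumption \ref{Assumption_2} is imposed. Second---and this is why the Dirichlet-Laplacian eigenbasis is used rather than a generic Galerkin basis---the approximating initial data must not degrade either the $H^3$-norm of $\Theta_0$ or the $H^1$-norm of $\Theta_1$; the commutation $P_n\Delta=\Delta P_n$ is exactly what guarantees the uniform bound on $\|\Theta_1^n\|_{H^1}$ used above. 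The $T$-independence of all Gronwall constants, needed so that the final time may later be chosen freely in Section \ref{Section_Well_Posdness}, is automatic here thanks to the coercivity of $-\kappaa\Delta$ under homogeneous Dirichlet conditions.
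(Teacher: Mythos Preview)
Your proposal is correct and follows essentially the same route as the paper: Galerkin approximation via the Dirichlet--Laplacian eigenbasis, energy tests at the $\Theta$ and $\Theta_t$ level (using the compatibility condition \eqref{theta_1} to control $\|\Theta_1\|_{H^1}$), and then algebraic recovery of the $L^2H^3$ and $\Theta_{tt}$ norms from the equation.

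The only noteworthy difference is the choice of test function at the intermediate step: the paper multiplies \eqref{Heat_Linearized} by $-\Delta\Theta_t$, which puts $\frac{\textup{d}}{\textup{d}t}\|\Delta\Theta\|_{L^2}^2$ directly into the energy and hence yields $\|\Theta\|_{L^\infty H^2}$ from the test itself, at the price of a leftover $\varepsilon\|\Delta\Theta_t\|_{L^2}^2$ that must be absorbed by summing with the time-differentiated estimate. You instead test with $-\Delta\Theta$, obtain $\|\Theta\|_{L^2H^2}$ cleanly, and then read $\|\Theta\|_{L^\infty H^2}$ off the equation using $\|g\|_{L^\infty L^2}\lesssim\|\nabla g(0)\|_{L^2}+\|g\|_{H^1L^2}$. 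Both variants close with $T$-independent constants; yours avoids the coupled absorption, the paper's avoids the extra appeal to $g\in C([0,T];L^2)$. Your remarks on why the eigenbasis is needed (to keep $\|\Theta_1^n\|_{H^1}$ uniformly bounded) and on $T$-independence via coercivity are apt, though the word ``Gronwall'' is superfluous here since no $\int_0^t\|\Theta\|^2\ds$ term survives on the right-hand side.
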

\begin{proof}
We multiply the equation \eqref{Heat_Linearized} by $\Theta$ and integrate in space, we obtain 
\begin{equation}
\begin{aligned}
&\frac{m}{2}\ddt \| \Theta \|_{L^{2}}^2+ \kappaa \|\nabla \Theta \|_{L^{2}}^2 +\ell \| \Theta \|_{L^{2}}^2
=\int_\Omega g \Theta \dx.
\end{aligned}
\end{equation}
We can estimate the right-hand side using H\"{o}lder and Young inequalities, to get 
\begin{equation}
\begin{aligned}
&\frac{m}{2}\ddt \| \Theta \|_{L^{2}}^2+ \kappaa \|\nabla \Theta \|_{L^{2}}^2 +\ell \| \Theta \|_{L^{2}}^2
\leq C(\varepsilon) \| g \|_{L^2}^2+ \varepsilon \|\Theta \|^2_{L^2}.
\end{aligned}
\end{equation}
The last term on the right of the inequality above can be absorbed into the left-hand side for sufficiently small values of $\varepsilon$. Thus, we obtain 
\begin{equation} \label{theta_estimate}
\begin{aligned}
&\frac{m}{2}\ddt \| \Theta \|_{L^{2}}^2+ \kappaa \|\nabla \Theta \|_{L^{2}}^2 +\ell \| \Theta \|_{L^{2}}^2
\lesssim \| g \|_{L^2}^2.
\end{aligned}
\end{equation}
Next, testing the equation \eqref{Heat_Linearized} by $-\Delta \Theta_t$ and integrating with respect to $x$, we get 
\begin{equation}
\begin{aligned}
&\frac{1}{2}\ddt \Big(\kappaa\|\Delta \Theta \|_{L^{2}}^2+ \ell\|\nabla \Theta \|_{L^{2}}^2 \Big)+m\|\nabla \Theta_t \|_{L^{2}}^2
=-\int_\Omega g \Delta\Theta_t\dx.
\end{aligned}
\end{equation}
Using H\"{o}lder and Young inequalities, we can estimate the right-hand side to obtain 
\begin{equation} \label{ineq_4}
\begin{aligned}
&\frac{1}{2}\ddt \Big(\kappaa\|\Delta \Theta \|_{L^{2}}^2+ \ell\|\nabla \Theta \|_{L^{2}}^2 \Big)+m\|\nabla \Theta_t \|_{L^{2}}^2
\leq C(\varepsilon) \| g \|_{L^2}^2+ \varepsilon \|\Delta\Theta_t \|^2_{L^2}.
\end{aligned}
\end{equation}
Further,  we differentiate  the equation \eqref{Heat_Linearized} with respect to time, we get 
\begin{equation} \label{heat_eq_t}
m \Theta_{tt} -\kappaa \Delta \Theta_t+\ell \Theta_t = g_t.
\end{equation}
We multiply equation \eqref{heat_eq_t} by $-\Delta \Theta_t$ and integrate in space to find
\begin{equation}\label{ineq_5}
\begin{aligned}
\frac{m}{2} \ddt \|\nabla \Theta_t \|_{L^{2}}^2+\kappaa \|\Delta \Theta_t \|_{L^{2}}^2+ \ell \|\nabla \Theta_t \|_{L^{2}}^2=&\,-\int_\Omega g_t \Delta\Theta_t\dx\\
\leq &\, C(\varepsilon) \| g_t \|_{L^2}^2+ \varepsilon \|\Delta\Theta_t \|^2_{L^2}.
\end{aligned}
\end{equation}
Summing up \eqref{theta_estimate}, \eqref{ineq_4} and  \eqref{ineq_5}, then  integrating the resulting inequality with respect to time, we get for suitably small values of $\varepsilon$
\begin{equation} \label{ineq_6}
\begin{aligned}
\begin{multlined}[t]
\| \Theta \|_{L^\infty L^{2}}^2+\|\nabla \Theta \|_{L^2 L^{2}}^2+\|\nabla \Theta \|_{L^\infty L^{2}}^2+\|\Delta \Theta \|_{L^\infty L^{2}}^2\\
+\|\nabla \Theta_t \|_{L^\infty L^{2}}^2+\|\nabla \Theta_t \|_{L^2 L^{2}}^2+\|\Delta \Theta_t \|_{L^2 L^{2}}^2\\
\lesssim \, \| \Theta_0 \|_{H^2}^2+\| \nabla \Theta_1 \|_{L^2}^2+\| g \|_{H^1 L^2}^2.
\end{multlined}
\end{aligned}
\end{equation}
This implies 
\begin{equation} \label{estimate_theta_H2}
\begin{aligned}
\begin{multlined}[t]
 \| \Theta \|_{L^\infty H^{2}}^2+\|\nabla \Theta \|_{L^2 L^{2}}^2+\| \Theta_t \|_{L^\infty H^1}^2+\| \Theta_t \|_{L^2 H^{2}}^2\\
\lesssim \, \| \Theta_0 \|_{H^2}^2+\| \nabla \Theta_1 \|_{L^2}^2+\| g \|_{H^1 L^2}^2.
\end{multlined}
\end{aligned}
\end{equation}
Recalling the compatibility condition \eqref{theta_1}, we have 
\begin{equation} \label{estimate_theta_1}
\| \nabla \Theta_1 \|_{L^2}^2 \lesssim \|  \Theta_0 \|_{H^3}^2 +\| \nabla g(0) \|_{L^2}^2.
\end{equation}
Note that based on the assumptions on the source term $g$, $\nabla g(x,0)$ is well-defined. Hence the estimate \eqref{estimate_theta_H2} becomes  
\begin{equation} \label{estimate_theta_H2_1}
\begin{aligned}
\begin{multlined}[t]
 \| \Theta \|_{L^\infty H^{2}}^2+\|\nabla \Theta \|_{L^2 L^{2}}^2+\| \Theta_t \|_{L^\infty H^1}^2+\| \Theta_t \|_{L^2 H^{2}}^2\\
\leq \, C_{\Theta,1} \big(\| \Theta_0 \|_{H^3}^2+\| \nabla g(0) \|_{L^2}^2+\| g \|_{H^1 L^2}^2 \big)
\end{multlined}
\end{aligned}
\end{equation} 
where the constant $C_{\Theta,1}$ does not depend on $T$.\\ 
Now, observe that we have from equation \eqref{Heat_Linearized}
\begin{equation}
\Delta \Theta = \frac{1}{\kappaa} \big( m \Theta_t +\ell \Theta -g \big).
\end{equation}
Hence,  for all $t \geq 0$, we obtain
\begin{equation} 
\begin{aligned}
\| \Delta \Theta(t) \|_{H^1}^2 & \lesssim    \|\nabla \Theta_t(t) \|_{ L^{2}}^2 +\|\nabla \Theta(t) \|_{L^{2}}^2+\| \nabla g(t) \|_{ L^2}^2.
\end{aligned}
\end{equation}
Taking  the estimate \eqref{estimate_theta_H2_1} into account and the fact that $-\Delta$ is an isomorphism from $H^3(\Om) \cap H^1_0(\Om)$ into $H^1(\Om)$, we deduce that
\begin{equation} \label{ineq_7}
\begin{aligned}  
\| \Theta \|_{L^2 H^3}^2&\, \lesssim \| \Theta_0 \|_{H^3}^2+\| \nabla g(0) \|_{L^2}^2+\| g \|_{H^1 L^2}^2+\| g \|_{L^2 H^1}^2.
\end{aligned}
\end{equation}
Further, from the equation \eqref{heat_eq_t}, we have 
\begin{equation}
\Vert \Theta_{tt} \|_{L^2 L^2}^2 \lesssim \| \Delta \Theta_t\|_{L^2 L^2}^2+ \| \Theta_t\|_{L^2 L^2}^2 +\| g_t\|_{L^2 L^2}^2,
\end{equation}
which together with \eqref{estimate_theta_H2_1} implies that
\begin{equation} \label{ineq_8}
\Vert \Theta_{tt} \|_{L^2 L^2}^2 \lesssim \| \Theta_0 \|_{H^3}^2+\| \nabla g(0) \|_{L^2}^2+\| g \|_{H^1 L^2}^2.
\end{equation}
The estimate \eqref{Regularity_H^2_Heat} follows by adding up \eqref{estimate_theta_H2_1}, \eqref{ineq_7} and \eqref{ineq_8}. This concludes the proof of the Proposition \ref{prop2}. 
\end{proof}

\section{Well-posedness of the nonlinear problem \eqref{coupled_problem_JMGT_Pennes}--Proof of Theorem \ref{local_Existence_Thoerem}}\label{Section_Well_Posdness}

Looking at the groundwork covered in the previous sections for the linearized problems \eqref{Linearized_Eq_JMGT} and  \eqref{Heat_Linearized}, we can transform the problem of showing the existence of solutions to \eqref{Main_system_JMGT} into a fixed-point problem for a suitably defined mapping, which will be solved by implementing Banach fixed-point theorem. We break down the proof of Theorem \ref{local_Existence_Thoerem} into three main steps.

\textbf{Step 1: Defining the fixed-point mapping}. Denote by $\mathbf{X}:=\mathcal{X}_p^2 \times \mathcal{X}_\Theta^2$.
 We consider the mapping $\mathcal{T}$ that associates to each $(p^\ast, \Theta^\ast) \in \mathbf{X}$ the solution $(p, \Theta) \in \mathbf{X}$ of the problem 
\begin{equation} \label{fixed_point_sys}
\left\{ 
\begin{aligned}
&\tau p_{ttt}+p_{tt}-h (\Theta^\ast)\Delta p -\zeta (\Theta^\ast) \Delta p_t =2 k (\Theta^\ast)((p^\ast_{t})^2+p^\ast p^\ast_{tt}),& \, &\text{in} &\  &\Omega \times (0,T),\\
& m \Theta_t - \kappaa \Delta \Theta + \ell \Theta = \mathcal{Q}(p^\ast_t),& \, &\text{in} &\ &\Omega \times (0,T)
\end{aligned}
\right.      
\end{equation}
supplemented with homogeneous Dirichlet boundary conditions and the initial data
\begin{equation}
\begin{aligned}
(p_0, p_1, p_2) \in&\, \big[H^2(\Om) \cap H^1_0(\Om) \big] \times  \big[H^2(\Om) \cap H^1_0(\Om) \big] \times H^1_0(\Om),\\
\Theta_0 \in&\, \big[H^3(\Om) \cap H^1_0(\Om) \big],
\end{aligned} 
\end{equation}
 i.e., $\mathcal{T}(p^\ast, \Theta^\ast)=(p, \Theta).$  \\
Since we intend to apply Banach fixed-point theorem, we are particularly interested in solutions of \eqref{fixed_point_sys} corresponding to $(p^\ast, \Theta^\ast) \in B \subset \mathbf{X}$ where the subset $B$ is given as
\begin{equation}
\begin{aligned}
B=\Big\{& (p^\ast, \Theta^\ast) \in  \mathbf{X}:  (p^\ast(0), p^\ast_t(0),p_{tt}^\ast(0), \Theta^\ast(0))=(p_0, p_1, p_2,  \Theta_0), \\
&  \qquad \qquad \| p^\ast \|_{\mathcal{X}_p^1} \leq \eta, \quad  \| p^\ast \|_{\mathcal{X}_p^2} \leq R_1,  \quad \Vert \Theta^\ast  \Vert_{\mathcal{X}_\Theta^2} \leq R_2 \Big\}.    
\end{aligned}  
\end{equation}
The product space $\mathbf{X}$ is endowed with the norm 
$$\Vert(p, \Theta) \Vert_{\mathbf{X}}^2=\Vert p \Vert_{\mathcal{X}_p^2}^2+\Vert \Theta \Vert_{\mathcal{X}_\Theta}^2$$
where  $\| \cdot \|_{\mathcal{X}_p^1}, \| \cdot \|_{\mathcal{X}_p^2}$ and $ \| \cdot \|_{\mathcal{X}_\Theta^2}$ denote, respectively the norms on the solutions spaces $ \mathcal{X}_p^1, \mathcal{X}_p^2, \mathcal{X}_\Theta^2$ (cf. \eqref{spaces}, \eqref{space_p_1}); that is  
\begin{equation} \label{norms}
\begin{aligned}
\Vert p \Vert_{\mathcal{X}_p^1}^2:=&\,\Vert p\Vert_{L^\infty H^1}^2+ \Vert p_t\Vert_{L^\infty H^1}^2+\tau \Vert  p_{tt} \Vert_{L^\infty L^2}^2,\\
\Vert p \Vert_{\mathcal{X}_p^2}^2:=&\,\Vert p\Vert_{L^\infty H^2}^2+ \Vert p_t\Vert_{L^\infty H^2}^2+\tau \Vert  p_{tt} \Vert_{L^\infty H^1}^2 ,\\
\Vert \Theta \Vert_{\mathcal{X}_\Theta^2}^2:=&\,\Vert \Theta \Vert_{L^2 H^3}^2+\Vert  \Theta_t \Vert_{L^\infty H^1}^2+ \| \Theta_t \|_{L^2 H^2}^2.
%+\Vert \Theta_{tt}\Vert_{L^2 L^2}^2.
\end{aligned}
\end{equation} 
We can check that the set $B$ is nonempty by considering the solution to the linear problems \eqref{Linearized_Eq_JMGT}, \eqref{Heat_Linearized} in the particular case where $r=b=1$ and $f=g=0$. Then the solution $(p, \Theta) \in \mathbf{X}$ will belong to the ball $B$ provided that we select $R_1,R_2, \eta>0$ such that
\begin{equation}
\begin{aligned}
&C_{p,1} \exp(T) \big(E_0[p](0)+E_1[p](0)\big) \leq C_{p,1} \exp(T) \eta_0^2 \leq \eta^2,\\
&C_{p,2} \exp(T) E[p](0) \leq R_1^2, \qquad C_{\Theta,2} \Vert \Theta_0 \Vert_{H^3}^2 \leq R_2^2
\end{aligned}
\end{equation}
where $C_{p,1}, C_{p,2}, C_{\Theta,2}$ are the constants appearing  respectively in the estimates \eqref{Main_First_Order_Est},  \eqref{main_estimate_E_p}, and \eqref{Regularity_H^2_Heat}. Based on the findings in Propositions \ref{prop1} and  \ref{prop2}, we can show that the mapping $\mathcal{T}$ is indeed well-defined. This will be done next along with proving that $\mathcal{T}$ is a self-mapping.

\textbf{Step 2: Showing the self-mapping property}.
Motivated by previous results in the literature  \cite{bongarti2021vanishing, Kaltenbacher_Nikolic_2022, kaltenbacher2012well}, we can show that $\mathcal{T}$ is a self-mapping under a smallness condition on the lower norm in $\mathcal{X}_p^1$. Since the existence of a fixed-point of $\mathcal{T}$ means the existence of a solution to the nonlinear problem \eqref{Main_system_JMGT} that lies in $B$, this smallness constraint also serves to guarantee that the JMGT equation in \eqref{Main_system_JMGT} does not degenerate. Specifically, we need $1-2k(\Theta) p$ to be positive at all times, which translates into choosing $\eta$ small enough so as $1-2k_1\eta^{1-\frac{d}{4}} R^{\frac{d}{4}}>0$. Indeed, this is enough to avoid degeneracy. More precisely,   using \eqref{k_1} and  inequality \eqref{Agmon_Inequality} give
\begin{equation}
\begin{aligned}
2 \| k(\Theta) p \|_{L^\infty L^\infty} \leq &\,2  \| k(\Theta) \|_{L^\infty L^\infty} \| p \|_{L^\infty L^\infty}\\
\leq &\, 2 k_1 \| p \|_{L^\infty L^2}^{1-\frac{d}{4}} \| p \|_{L^\infty H^2}^{\frac{d}{4}}\\
\leq &\, 2 k_1 \eta^{1-\frac{d}{4}} R_1^{\frac{d}{4}}.
\end{aligned}
\end{equation}
At this stage, no smallness conditions are imposed on $T$ and $R_1$.  
Our approach is similar to that taken in  \cite{bongarti2021vanishing, Nikolic_2022}.
\begin{lemma}\label{Lemma_Self_Mapping}
Given $\tau >0$. Let $T>0$ be arbitrary. There exist   $0<\eta_0\ll  \eta<1$ are sufficiently small, and $R_1$ and $R_2$  such that if the initial data satisfy
$$E_0[p](0)+E_1[p](0) \leq \eta_0^2$$
and 
\begin{equation}
C_{p,1}  (\eta_0^2 +C_f T (\eta^4+ \eta^{4-\frac{d}{2}} R_1^{\frac{d}{2}}))\exp\big((1+\sqrt{T}+T)C_1( R_2)\big) \leq \eta^2,
\end{equation}
then we have $\mathcal{T}(B) \subset B$. The constants $ C_{p,1}, C_1, C_f$ are independent of $T$. 
\end{lemma}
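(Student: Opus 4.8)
The plan is to fix $T>0$ arbitrarily, take an arbitrary $(p^\ast,\Theta^\ast)\in B$, set $(p,\Theta):=\mathcal{T}(p^\ast,\Theta^\ast)$, and verify the three size constraints defining $B$ for $(p,\Theta)$ (the prescribed initial traces come for free, since $(p,\Theta)$ solves \eqref{fixed_point_sys} with precisely those data). The radii are fixed in the order $R_2$, then $R_1$, then $\eta$, then $\eta_0$, each choice using only already‑fixed quantities, which is what makes the argument non‑circular. As a preliminary I would check that the coefficients $r:=h(\Theta^\ast)$, $b:=\zeta(\Theta^\ast)$ of the linearized JMGT equation satisfy Assumption~\ref{Assumption_1}: lower boundedness is immediate from \eqref{bound_h}, \eqref{zeta}; for the regularity one writes $r_t=h'(\Theta^\ast)\Theta^\ast_t$, $\nabla r=h'(\Theta^\ast)\nabla\Theta^\ast$, estimates $\|h'(\Theta^\ast)\|_{L^\infty L^\infty}\lesssim 1+\|\Theta^\ast\|_{L^\infty H^2}^{1+\gamma_1}$ via \eqref{h'_assump} (the norm $\|\Theta^\ast\|_{L^\infty H^2}$ being controlled $T$-independently by $\|\Theta^\ast\|_{L^2H^3}$ and $\|\Theta^\ast_t\|_{L^2H^2}$, hence by $R_2$ and the data), and uses $\Theta^\ast\in\mathcal{X}_\Theta^2$ to get $\nabla\Theta^\ast\in L^2(0,T;H^2)\hookrightarrow L^2(0,T;L^\infty)$ and $\Theta^\ast_t\in L^2(0,T;H^2)\hookrightarrow L^2(0,T;L^\infty)$; hence $r,b\in L^\infty(L^\infty)\cap L^2(W^{1,\infty})$, $r_t,b_t\in L^2(0,T;L^\infty)$ with norms bounded by $C_1(R_2)$. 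Cauchy--Schwarz in time in \eqref{lambda_0}, \eqref{lambda} then gives $\int_0^T\Lambda_0\ds,\int_0^T\Lambda\ds\le(1+\sqrt T+T)\,C_1(R_2)$, which supplies the exponential factor of the statement. Finally, using \eqref{k_1}, \eqref{properties_k}, \eqref{phi_1}, \eqref{phi_inequalities}, $p^\ast\in\mathcal{X}_p^2$ and the homogeneous boundary data of $p^\ast$, one checks that $f=2k(\Theta^\ast)((p^\ast_t)^2+p^\ast p^\ast_{tt})\in L^2(0,T;H^1_0(\Om))$ and the heat source $g=\Phi(\Theta^\ast,p^\ast_t)=\phi(\Theta^\ast)(p^\ast_t)^2\in H^1(0,T;H^1_0(\Om))$ with $g(0)=\phi(\Theta_0)p_1^2$ fixed by the data and the compatibility conditions of Assumption~\ref{Assumption_2} met by the standing hypotheses on the initial data, so that Propositions~\ref{lemma_lower_norms}, \ref{prop1}, \ref{prop2} all apply and $(p,\Theta)\in\mathbf X$ is well defined.

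The heart of the proof is to estimate the source terms so that every nonlinear contribution carries a strictly positive power of the \emph{small} radius $\eta$. The mechanism is uniform: whenever a product contains a factor controlled only in $\mathcal{X}_p^2$ (i.e.\ $p^\ast,p^\ast_t\in L^\infty H^2$ or $p^\ast_{tt}\in L^\infty H^1$), one splits an $L^\infty$ or $L^4$ norm of another factor by \eqref{Agmon_Inequality} or \eqref{Lady_Inequality} into an $H^2$/$H^1$ norm, bounded by $R_1$ (or by $R_2$ for the coefficients $k'(\Theta^\ast)$, $\phi'(\Theta^\ast)$ and the factor $\nabla\Theta^\ast$, via \eqref{properties_k}, \eqref{phi_inequalities}), times an $L^2$ norm of $p^\ast$, $p^\ast_t$, $\nabla p^\ast$, $p^\ast_{tt}$ or $\nabla p^\ast_t$, which is bounded by $\eta$ because $p^\ast$ lies in the $\mathcal{X}_p^1$-ball. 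For instance $\|(p^\ast_t)^2\|_{L^2L^2}^2\lesssim T\eta^4$ and $\|p^\ast p^\ast_{tt}\|_{L^2L^2}^2\lesssim T\eta^{4-d/2}R_1^{d/2}$ by \eqref{Agmon_Inequality}, and treating $\nabla((p^\ast_t)^2)$, $\nabla(p^\ast p^\ast_{tt})$, $g_t$ and $\nabla g$ the same way yields, for some polynomial $P$ and some exponent $\theta=\theta(d)>0$,
\begin{align*}
&\|f\|_{L^2L^2}^2\le C_fT\big(\eta^4+\eta^{4-d/2}R_1^{d/2}\big),\\
&\|f\|_{L^2L^2}^2+\|\nabla f\|_{L^2L^2}^2+\|g\|_{H^1L^2}^2+\|g\|_{L^2H^1}^2\le T\,P(R_1,R_2)\,\eta^{\theta}.
\end{align*}

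Feeding the first bound into Proposition~\ref{lemma_lower_norms} and using Poincar\'e's inequality to compare $E_0+E_1+\int D_0$ with $\|p\|_{\mathcal{X}_p^1}^2$ gives
\begin{equation*}
\|p\|_{\mathcal{X}_p^1}^2\le C_{p,1}\Big(\eta_0^2+C_fT\big(\eta^4+\eta^{4-d/2}R_1^{d/2}\big)\Big)\exp\big((1+\sqrt T+T)C_1(R_2)\big),
\end{equation*}
which is $\le\eta^2$ exactly under the standing hypothesis. Next, $F=\|f\|_{L^2}^2+\|\nabla f\|_{L^2}^2$ in Proposition~\ref{prop1} gives $\|p\|_{\mathcal{X}_p^2}^2\le C_{p,2}\exp\big((1+\sqrt T+T)C_1(R_2)\big)\big(E[p](0)+T\,P(R_1,R_2)\eta^\theta\big)$, and Proposition~\ref{prop2} gives $\|\Theta\|_{\mathcal{X}_\Theta^2}^2\le C_{\Theta,2}\big(\|\Theta_0\|_{H^3}^2+\|\nabla g(0)\|_{L^2}^2+T\,P(R_1,R_2)\eta^\theta\big)$. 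Since $\|\Theta_0\|_{H^3}^2+\|\nabla g(0)\|_{L^2}^2$ and $E[p](0)$ depend only on the data, set $R_2^2:=2C_{\Theta,2}\big(\|\Theta_0\|_{H^3}^2+\|\nabla g(0)\|_{L^2}^2\big)$, then $R_1^2:=2C_{p,2}\exp\big((1+\sqrt T+T)C_1(R_2)\big)E[p](0)$; with $R_1,R_2$ now fixed, choose $\eta\in(0,1)$ so small that $C_{p,2}\exp(\cdots)TP(R_1,R_2)\eta^\theta\le\tfrac12R_1^2$, $C_{\Theta,2}TP(R_1,R_2)\eta^\theta\le\tfrac12R_2^2$ and $C_{p,1}C_fT(\eta^4+\eta^{4-d/2}R_1^{d/2})\exp(\cdots)\le\tfrac12\eta^2$ — possible since $4,\,4-d/2>2$ for $d\le3$ — and finally $\eta_0\ll\eta$ with $C_{p,1}\eta_0^2\exp(\cdots)\le\tfrac12\eta^2$. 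With these choices $\|p\|_{\mathcal{X}_p^1}\le\eta$, $\|p\|_{\mathcal{X}_p^2}\le R_1$ and $\|\Theta\|_{\mathcal{X}_\Theta^2}\le R_2$, hence $\mathcal{T}(B)\subset B$.

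The main obstacle is the bookkeeping in the nonlinear source estimates: $f$, $\nabla f$, $g_t$ and $\nabla g$ unavoidably contain $p^\ast_{tt}$ and $\nabla p^\ast_{tt}$, whose only available bounds are in $L^\infty(H^1)$ and — through $\mathcal{X}_p^1$ — $L^\infty(L^2)$, so extracting from each such term a genuine power of the \emph{small} radius $\eta$ rather than merely of $R_1$ forces a case‑by‑case choice of which factor to interpolate and in which Lebesgue exponent; it is precisely this that lets the smallness requirement be imposed on the low‑order norm $\|\cdot\|_{\mathcal{X}_p^1}$ alone while $T$ and the high‑order radii $R_1,R_2$ stay arbitrary. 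A secondary but essential point is that the membership $r_t,b_t\in L^2(0,T;L^\infty(\Om))$ demanded by Assumption~\ref{Assumption_1} is available only because $\mathcal{X}_\Theta^2$ records $\Theta_t\in L^2(0,T;H^2(\Om))$ and $\Theta\in L^2(0,T;H^3(\Om))$; a weaker control of $\Theta^\ast$ would break the JMGT energy estimates of Section~\ref{Section_Linearized_JMGT}.
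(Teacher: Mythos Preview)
Your proposal is correct and follows essentially the same route as the paper: verify Assumption~\ref{Assumption_1} for $r=h(\Theta^\ast)$, $b=\zeta(\Theta^\ast)$, bound $\int_0^T\Lambda_0$ and $\int_0^T\Lambda$ by $(1+\sqrt T+T)C_1(R_2)$, use Agmon/Ladyzhenskaya interpolation to extract positive powers of $\eta$ from every nonlinear source term, apply Propositions~\ref{lemma_lower_norms}, \ref{prop1}, \ref{prop2}, and fix the radii in the order $R_2\to R_1\to\eta\to\eta_0$. The paper tracks the explicit exponents (e.g.\ \eqref{estimate_F}, \eqref{g_estimate_1}) rather than writing a generic $TP(R_1,R_2)\eta^\theta$, but the mechanism and the selection of parameters are identical; your remark that $\|\Theta^\ast\|_{L^\infty H^2}$ must be recovered from $\|\Theta^\ast\|_{L^2H^3}$, $\|\Theta^\ast_t\|_{L^2H^2}$ and $\|\Theta_0\|_{H^2}$ is in fact a point the paper uses implicitly without spelling out.
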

\begin{proof}
We aim to prove that for $(p^\ast, \Theta^\ast) \in B$, the solution to  the linear initial boundary value problem \eqref{fixed_point_sys} defined as  $(p, \Theta)=\mathcal{T}(p^\ast, \Theta^\ast)$ exists in $B$. To this end, we exploit what we have already established in Propositions \ref{lemma_lower_norms},   \ref{prop1} and  \ref{prop2}. 
 To apply the linear analysis, we need first to show that the $p^\ast$ and $\Theta^\ast$ terms in \eqref{fixed_point_sys} satisfy all the hypotheses of the linear existence-uniqueness result. For this purpose, 
we begin by setting
\begin{equation}\label{the function_f}
\begin{aligned}
    & r(x,t)=h (\Theta^\ast), & \qquad & b(x,t)=\zeta (\Theta^\ast),\\
    &f(x,t)=2k (\Theta^\ast)\big((p^\ast_t)^2+p^\ast p_{tt}^\ast \big),& \qquad & g(x,t)=\Phi( \Theta^\ast, p^\ast_t)=\phi(\Theta^\ast) (p_t^\ast)^2.
    \end{aligned}
\end{equation}
We know from Assumption \ref{Assumption_nonlinear} that there exist $h_1, \zeta_1 >0$ such that
$$ h_1 \leq h (\Theta^\ast)=r(x,t) \quad \text{and}\quad  \zeta_1 \leq \zeta (\Theta^\ast)=b(x,t).$$
For convenience, we recall the definitions of the functions $\Lambda_0, \Lambda$ appearing, respectively  in Proposition  \ref{lemma_lower_norms}, and  Proposition \ref{prop1}
\begin{equation}
\begin{aligned}
&\Lambda_0(t)=1+ \| r_t(t)\|_{L^\infty}+ \| r_t(t) \|_{L^\infty}^2+\| b_t(t) \|_{L^\infty} +\| \nabla b(t) \|^2_{L^\infty}+\| \nabla r(t) \|^2_{L^\infty},\\
&\Lambda(t)= 1 + \Vert b_t (t) \Vert_{L^\infty}+\Vert b_t(t) \Vert_{L^\infty}^2+ \Vert r_t(t) \Vert_{L^\infty}^2, \quad 0 \leq t \leq T.
\end{aligned}
\end{equation}
According to the properties of the function $\zeta$ (see \eqref{zeta}) and the embedding $H^2(\Om) \hookrightarrow L^\infty(\Om)$, we can see that
\begin{equation} \label{ineq_gradient_b}
\begin{aligned}
\Vert \nabla b \Vert_{L^2 L^\infty}^2= \Vert \zeta' (\Theta^\ast) \nabla \Theta^\ast \Vert_{L^2 L^\infty}^2\leq &\, \Vert \zeta' (\Theta^\ast) \Vert_{L^\infty L^\infty}^2 \Vert  \nabla \Theta^\ast \Vert_{L^2 L^\infty}^2\\
\lesssim &\, (1+ \Vert  \Theta^\ast \Vert_{L^\infty L^\infty}^{\gamma_3+1})^2 \Vert \nabla \Theta^\ast \Vert_{L^2 L^\infty}^2\\
\lesssim &\, (1+ \Vert  \Theta^\ast \Vert_{L^\infty L^\infty}^{2\gamma_3+2}) \Vert \nabla \Theta^\ast \Vert_{L^2 H^2}^2\\
\lesssim &\, (1+ \Vert  \Theta^\ast \Vert_{L^\infty H^2}^{2\gamma_3+2}) \Vert \Theta^\ast \Vert_{L^2 H^3}^2\\
\lesssim &\, (1+ R_2^{2\gamma_3+2})R_2^2.
\end{aligned}
\end{equation}
Analogously, we have the estimate 
\begin{equation} \label{ineq_b_t}
\begin{aligned}
\Vert b_t \Vert_{L^2 L^\infty}^2 =  \Vert \zeta' (\Theta^\ast) \Theta^\ast_t \Vert_{L^2 L^\infty}^2\leq &\, \Vert \zeta' (\Theta^\ast) \Vert_{L^\infty L^\infty}^2 \Vert  \Theta^\ast_t \Vert_{L^2 L^\infty}^2 \\
\lesssim &\, (1+ \Vert  \Theta^\ast \Vert_{L^\infty L^\infty}^{\gamma_3+1})^2 \Vert \Theta^\ast_t \Vert_{L^2 L^\infty}^2\\
\lesssim &\, (1+ \Vert  \Theta^\ast \Vert_{L^\infty H^2}^{2\gamma_3+2}) \Vert \Theta^\ast_t \Vert_{L^2 H^2}^2\\
\lesssim &\, (1+ R_2^{2\gamma_3+2})R_2^2.
\end{aligned}
\end{equation}
Thus on account of the embedding $L^2 (0, T) \hookrightarrow L^1(0, T)$ and the fact that $(p^\ast, \Theta^\ast) \in B$, we get  
\begin{equation} \label{estimates_b} 
\Vert \nabla b \Vert_{L^2 L^\infty}^2+\Vert b_t \Vert_{L^2 L^\infty}^2+\Vert b_t \Vert_{L^1 L^\infty} \leq (1+\sqrt{T})C(R_2).
\end{equation}
Similarly, we can obtain estimates for $r_t$ and $\nabla r$ by using the polynomial growth of the function $h$ \eqref{h'_assump}. Indeed, we have thanks to the embedding $H^2(\Om) \hookrightarrow L^\infty(\Om)$
\begin{equation}
\begin{aligned}
\Vert r_t \Vert_{L^2 L^\infty}^2=\Vert h '(\Theta^\ast) \Theta^\ast_t\Vert_{L^2 L^\infty}^2 &\, \lesssim (1+\Vert  \Theta^\ast \Vert_{L^\infty L^\infty}^{\gamma_1+1})^2\Vert \Theta^\ast_t\Vert_{L^2 L^\infty}^2\\
    &\, \lesssim (1+\Vert  \Theta^\ast \Vert_{L^\infty H^2}^{2\gamma_1+2})\Vert \Theta^\ast_t\Vert_{L^2 H^2}^2\\
   &\, \lesssim   (1+ R_2^{2\gamma_1+2})R_2^2,
\end{aligned}
\end{equation}
and
\begin{equation}
\begin{aligned}
\Vert \nabla r \Vert_{L^2 L^\infty}^2=\Vert h '(\Theta^\ast) \nabla \Theta^\ast \Vert_{L^2 L^\infty}^2 &\, \lesssim (1+\Vert  \Theta^\ast \Vert_{L^\infty L^\infty}^{\gamma_1+1})^2\Vert \nabla \Theta^\ast\Vert_{L^2 L^\infty}^2\\
    &\, \lesssim (1+\Vert  \Theta^\ast \Vert_{L^\infty H^2}^{2\gamma_1+2})\Vert \Theta^\ast \Vert_{L^2 H^3}^2\\
    &\, \lesssim   (1+ R_2^{2\gamma_1+2})R_2^2.
\end{aligned}
\end{equation}
The above estimates along with the embedding $L^2(0, T) \hookrightarrow L^1(0, T)$ and the fact that $(p^\ast, \Theta^\ast) \in B$ yield 
\begin{equation} \label{estimates_r} 
\Vert \nabla r \Vert_{L^2 L^\infty}+\Vert r_t \Vert_{L^2 L^\infty}+\Vert r_t \Vert_{L^1 L^\infty} \leq (1+\sqrt{T})C(R_2).
\end{equation} 
Hence, by putting together the estimates \eqref{estimates_b}, \eqref{estimates_r}, we infer that
\begin{equation} 
\Vert \Lambda \Vert_{L^1(0,T)}+ \Vert \Lambda_0 \Vert_{L^1(0,T)} \leq (1+\sqrt{T}+T) C_1(R_2)
\end{equation}
where here and below $C_i(R_2),\, i=1,2,3$ denotes a constant depending polynomially on $R_2$ but not on $T$.

Now we focus on the source term $f$. Keeping in mind \eqref{the function_f}, we have
\begin{equation} \label{estimate_f}  
\begin{aligned}
    \Vert f \Vert_{L^2 L^2}= &\Vert 2k (\Theta^\ast) (( p^\ast_t)^2 + p^\ast p^\ast_{tt}) \Vert_{L^2 L^2} \\
    \leq &\,  \Vert  k (\Theta^\ast ) \Vert_{L^\infty L^\infty} ( \Vert p^\ast_t  \Vert_{L^\infty L^4} \Vert p^\ast_t  \Vert_{L^2 L^4} + \Vert p^\ast \Vert_{L^\infty L^\infty} \Vert p^\ast_{tt} \Vert_{L^2 L^2})\\
    \leq &\, C \sqrt{T} k_1  ( \Vert p^\ast_t  \Vert_{L^\infty H^1}^2+ \|p^\ast \|_{L^\infty L^2}^{1-\frac{d}{4}} \|p^\ast \|_{L^\infty H^2}^{\frac{d}{4}}\Vert p^\ast_{tt} \Vert_{L^\infty L^2})\\
    \lesssim &\,  \sqrt{T}( \| p^\ast \|_{\mathcal{X}_p^1}^2+\| p^\ast \|_{\mathcal{X}_p^1}^{2-\frac{d}{4}}\| p^\ast \|_{\mathcal{X}_p^2}^{\frac{d}{4}})\\
    \lesssim &\, \sqrt{T} (\eta^2+ \eta^{2-\frac{d}{4}} R_1^{\frac{d}{4}}),
    \end{aligned}
\end{equation}
where we have used the properties of the function $k$ (see \eqref{k_1}), the embeddings $H^1(\Om) \hookrightarrow L^4(\Om)$, $L^\infty(0, T) \hookrightarrow L^2(0, T)$, Agmon's inequality \eqref{Agmon_Inequality} and the fact that $(p^\ast, \Theta^\ast) \in B$.\\
In the same manner, we can get a bound for $\Vert \nabla f \Vert_{L^2 L^2}$ as follows: 
\begin{equation} \label{gradient_f_est_0}
\begin{aligned}
\begin{multlined}[t]
     \Vert \nabla f \Vert_{L^2 L^2}    \leq \,\Vert 2 k (\Theta^\ast) (2 p^\ast_t \nabla p^\ast_{t}+ \nabla p^\ast  p^\ast_{tt}+ p^\ast \nabla p^\ast_{tt})\Vert_{L^2 L^2}\\
      +\Vert 2k '(\Theta^\ast) \nabla \Theta^\ast ((p^\ast_t)^2 + p^\ast p^\ast_{tt})\Vert_{L^2 L^2}\\
    \lesssim \, \| k (\Theta^\ast) \|_{L^\infty L^\infty} \Big( \| p^\ast_t \|_{L^\infty L^\infty} \| \nabla p^\ast_{t}\|_{L^2 L^2}+\|\nabla p^\ast \|_{L^\infty L^4} \| p^\ast_{tt} \|_{L^2 L^4}+  \|p^\ast \|_{L^\infty L^\infty} \| \nabla p^\ast_{tt} \|_{L^2 L^2}\Big)\\
    +\Vert  k'(\Theta^\ast ) \Vert_{L^\infty L^\infty} \Vert  \nabla \Theta^\ast  \Vert_{L^\infty L^4} \Big(\Vert p^\ast_t \Vert_{L^2 L^4} \Vert p^\ast_t \Vert_{L^\infty L^\infty}+ \|p^\ast \|_{L^\infty L^\infty} \| p^\ast_{tt} \|_{L^2 L^4}\Big).
    \end{multlined}
    \end{aligned}
\end{equation}
Recalling the properties of the function $k$ \eqref{k_1}, \eqref{properties_k} and using the embeddings $H^2(\Om) \hookrightarrow L^\infty(\Om)$, $H^1(\Om) \hookrightarrow L^4(\Om)$,  $L^\infty(0, T) \hookrightarrow L^2(0, T)$, together with the  interpolation inequalities \eqref{Agmon_Inequality}, \eqref{Lady_Inequality}, it follows
\begin{equation}  \label{gradient_f_est}
\begin{aligned}
\begin{multlined}[t]
    \Vert \nabla f \Vert_{L^2 L^2} \lesssim \, \sqrt{T} \Big[ \|p^\ast_t \|_{L^\infty L^2}^{1-\frac{d}{4}} \|p^\ast_t \|_{L^\infty H^2}^{\frac{d}{4}} \| \nabla p^\ast_{t}\|_{L^\infty L^2}+  \|\nabla p^\ast \|_{L^\infty L^2}^{1-\frac{d}{4}} \|\nabla p^\ast \|_{L^\infty H^1}^{\frac{d}{4}} \| \nabla p^\ast_{tt} \|_{L^\infty L^2}\\
    +  \|p^\ast \|_{L^\infty L^2}^{1-\frac{d}{4}} \|p^\ast \|_{L^\infty H^2}^{\frac{d}{4}} \| \nabla p^\ast_{tt} \|_{L^\infty L^2}\\
      + (1+\Vert \Theta^\ast \Vert_{L^\infty L^\infty}^{\gamma_2+1}) \Vert \nabla \Theta^\ast  \Vert_{L^\infty H^1} \Big( \Vert p^\ast_t \Vert_{L^\infty H^1} \|p^\ast_t \|_{L^\infty L^2}^{1-\frac{d}{4}} \|p^\ast_t \|_{L^\infty H^2}^{\frac{d}{4}}\\
     + \|p^\ast \|_{L^\infty L^2}^{1-\frac{d}{4}} \|p^\ast \|_{L^\infty H^2}^{\frac{d}{4}} \| p^\ast_{tt} \|_{L^\infty H^1} \Big) \Big]\\
     \lesssim \, \sqrt{T}\Big(1+\| \Theta^\ast \|_{\mathcal{X}_\Theta^2}+\| \Theta^\ast \|_{\mathcal{X}_\Theta^2}^{\gamma_2+2}\Big)\Big(\| p^\ast \|_{\mathcal{X}_p^1}^{2-\frac{d}{4}}\| p^\ast \|_{\mathcal{X}_p^2}^{\frac{d}{4}}+ \| p^\ast \|_{\mathcal{X}_p^1}^{1-\frac{d}{4}}\| p^\ast \|_{\mathcal{X}_p^2}^{1+\frac{d}{4}} \Big).
     \end{multlined}
    \end{aligned}
\end{equation}
Since $(p^\ast, \Theta^\ast) \in B$, we get
\begin{equation} \label{estimate_f_t} 
\Vert \nabla f \Vert_{L^2 L^2} \lesssim \sqrt{T}(1+R_2+R_2^{2+\gamma_2})(\eta^{2-\frac{d}{4}} R_1^{\frac{d}{4}}+\eta^{1-\frac{d}{4}} R_1^{1+\frac{d}{4}}).
\end{equation}
Hence,  combining \eqref{estimate_f} and \eqref{estimate_f_t}, we obtain the following estimate for the function $F$ in Proposition \ref{prop1} 
\begin{equation} \label{estimate_F}
\|F\|_{L^1(0, T)} \leq C_f T \Big((\eta^4+ \eta^{4-\frac{d}{2}} R_1^{\frac{d}{2}})+ R_1^{\frac{d}{2}}(\eta^{4-\frac{d}{2}} +\eta^{2-\frac{d}{2}} R_1^{2})C_2(R_2) \Big)
\end{equation}
where the constant $C_f$ does not depend on $T$.\\
Therefore, all the hypotheses on the functions $r,\, b$ and $f$ in Proposition \eqref{prop1} are fulfilled. Thus,  we can deduce the existence of a unique solution $p \in \mathcal{X}_p^2$ to the first equation in system \eqref{fixed_point_sys}. In addition, since $( \Theta^\ast, p^\ast) \in B$,  we have $g= \Phi( \Theta^\ast, p^\ast_t) \in H^1(0, T ; H^1_0(\Om))$, which ensures, according to Proposition \ref{prop2}, the existence of a unique solution $\Theta \in \mathcal{X}_\Theta^2$ to the second equation in \eqref{fixed_point_sys}. We can then conclude  that the mapping $\mathcal{T}$ is well-defined. 
Furthermore, the functions $p$ and $\Theta$ satisfy, respectively  the estimates \eqref{main_estimate_E_p} and \eqref{Regularity_H^2_Heat} provided in Propositions \ref{prop1} and  \ref{prop2}. To take advantage of these bounds, we first need upper bounds for the source term $g$. To this end, we apply H\"{o}lder's inequality to get the   
\begin{equation} \label{g(0)_estimate}
\begin{aligned}
\Vert \nabla g(0) \Vert_{L^2}& =\,\Vert \phi'(\Theta_0) \nabla \Theta_0(p_1)^2 +2\phi(\Theta_0)p_1 \nabla p_1 \Vert_{L^2}\\
&
\begin{multlined}[t]
\lesssim \, \Vert \phi'(\Theta_0) \nabla \Theta_0(p_1)^2  \Vert_{L^2}+\Vert \phi(\Theta_0)p_1 \nabla p_1 \Vert_{L^2}\\
\lesssim \, \Vert \phi'(\Theta_0)  \Vert_{L^\infty} \Vert \nabla \Theta_0\Vert_{L^\infty} \Vert p_1  \Vert_{L^4}^2+\Vert \phi(\Theta_0) \Vert_{L^\infty} \Vert p_1 \Vert_{L^4} \Vert \nabla p_1 \Vert_{L^4}\\
\lesssim \,(1+ \| \Theta_0 \|_{H^3}^{1+\gamma_4}) \| \Theta_0 \|_{H^3} \| p_1 \|_{H^2}^2+ \phi_1 \| p_1 \|_{H^2}^2\\
\leq \, C_g(1+ \| \Theta_0 \|_{H^3} +\| \Theta_0 \|_{H^3}^{2+\gamma_4}) \| p_1 \|_{H^2}^2.
\end{multlined}
\end{aligned}
\end{equation}
We arrived at the last bound in \eqref{g(0)_estimate} by exploiting the properties of the function $\phi$ (see \eqref{phi_1} and  \eqref{phi_inequalities}) along with the embeddings $H^2(\Om) \hookrightarrow L^\infty(\Om)$, $H^1(\Om) \hookrightarrow L^4(\Om)$. The constant $C_g>0$ may depend on the medium parameters but not on $T$. \\
Moreover, recalling \eqref{the function_f}, we can rely on the same tools to obtain
\begin{equation} 
\begin{aligned}
   \Vert \nabla g \Vert_{L^2 L^2}^2+ \Vert g_t \Vert_{L^2 L^2}^2
 =&\, \Vert \phi'(\Theta^\ast) \nabla \Theta^\ast (p^\ast_t)^2 +2\phi(\Theta^\ast)p^\ast_t \nabla p^\ast_t \Vert_{L^2 L^2}^2\\
 &+ \Vert \phi'(\Theta^\ast)  \Theta_t^\ast (p_t^\ast)^2 +2\phi(\Theta^\ast)p_t^\ast  p_{tt}^\ast \Vert_{L^2 L^2}^2\\
 \lesssim& \, \Vert \phi'(\Theta^\ast) \nabla \Theta^\ast (p^\ast_t)^2 \Vert_{L^2 L^2}^2 +\Vert \phi(\Theta^\ast)p^\ast_t \nabla p^\ast_t \Vert_{L^2 L^2}^2\\
 &
 \begin{multlined}[t]
 + \Vert \phi'(\Theta^\ast)  \Theta_t^\ast (p_t^\ast)^2 \Vert_{L^2 L^2}^2 
 +\Vert  \phi(\Theta^\ast)p_t^\ast  p_{tt}^\ast \Vert_{L^2 L^2}^2. 
 \end{multlined}
 \end{aligned}
\end{equation}
Hence, this gives 
\begin{equation} \label{g_estimate_0}
\begin{aligned}
 \Vert \nabla g \Vert_{L^2 L^2}^2+ \Vert g_t \Vert_{L^2 L^2}^2\lesssim &\, \Vert \phi'(\Theta^\ast)  \Vert_{L^\infty L^\infty}^2 \Vert \nabla \Theta^\ast \Vert_{L^\infty L^4}^2  \Vert p^\ast_t \Vert_{L^\infty L^\infty}^2 \Vert p^\ast_t \Vert_{L^2 L^4}^2 \\
 &
 \begin{multlined}[t]
 +\Vert \phi(\Theta^\ast) \Vert_{L^\infty L^\infty}^2 \Vert p^\ast_t \Vert_{L^\infty L^\infty}^2 \Vert \nabla p^\ast_t \Vert_{L^2 L^2}^2\\
 + \Vert \phi'(\Theta^\ast) \Vert_{L^\infty L^\infty}^2  \Vert \Theta_t^\ast \Vert_{L^\infty L^4}^2 \Vert p^\ast_t \Vert_{L^\infty L^\infty}^2 \Vert p^\ast_t \Vert_{L^2 L^4}^2\\
 +\Vert  \phi(\Theta^\ast) \Vert_{L^\infty L^\infty}^2  \Vert p_t^\ast  \Vert_{L^\infty L^\infty}^2  \Vert p_{tt}^\ast \Vert_{L^2 L^2}^2
 \end{multlined}
\end{aligned}
\end{equation}
Consequently, we obtain 
\begin{equation}\label{g_estimate}
\begin{aligned}
\Vert \nabla g \Vert_{L^2 L^2}^2+ \Vert g_t \Vert_{L^2 L^2}^2
 \lesssim &\, T(1+ \Vert \Theta^\ast  \Vert_{L^\infty L^\infty}^{1+\gamma_4})^2 \Vert \Theta^\ast \Vert_{L^\infty H^2}^2  \Vert p^\ast_t \Vert_{L^\infty L^\infty}^2 \Vert p^\ast_t \Vert_{L^\infty H^1}^2 \\
 &\begin{multlined}
 + T \phi_1^2 \Vert p^\ast_t \Vert_{L^\infty L^\infty}^2 \Vert \nabla p^\ast_t \Vert_{L^\infty L^2}^2\\
 + T(1+ \Vert \Theta^\ast  \Vert_{L^\infty L^\infty}^{1+\gamma_4})^2 \Vert \Theta_t^\ast \Vert_{L^\infty H^1}^2 \Vert p^\ast_t \Vert_{L^\infty L^\infty}^2 \Vert p^\ast_t \Vert_{L^\infty H^1}^2\\
 +T \phi_1^2 \Vert p_t^\ast  \Vert_{L^\infty L^\infty}^2  \Vert p_{tt}^\ast \Vert_{L^\infty L^2}^2.
 \end{multlined}
\end{aligned}
\end{equation}
We can get more from the estimate \eqref{g_estimate} by using Agmon's inequality \eqref{Agmon_Inequality}. In fact, we have
\begin{equation} \label{g_estimate_1}
\begin{aligned}
 & \Vert \nabla g \Vert_{L^2 L^2}^2+ \Vert g_t \Vert_{L^2 L^2}^2\\
 \lesssim &\, T(1+ \Vert \Theta^\ast  \Vert_{L^\infty L^\infty}^{2+2\gamma_4}) \Vert \Theta^\ast \Vert_{L^\infty H^2}^2  \Vert p_t^\ast \Vert_{L^\infty L^2}^{2-\frac{d}{2}} \Vert p_t^\ast \Vert_{L^\infty H^2}^{\frac{d}{2}} \Vert p^\ast_t \Vert_{L^\infty H^1}^2 \\
 &\begin{multlined}[t]
 + T \Vert p_t^\ast \Vert_{L^\infty L^2}^{2-\frac{d}{2}} \Vert p_t^\ast \Vert_{L^\infty H^2}^{\frac{d}{2}} \Vert \nabla p^\ast_t \Vert_{L^\infty L^2}^2\\
 + T(1+ \Vert \Theta^\ast  \Vert_{L^\infty L^\infty}^{2+2\gamma_4}) \Vert \Theta_t^\ast \Vert_{L^\infty H^1}^2 \Vert p_t^\ast \Vert_{L^\infty L^2}^{2-\frac{d}{2}} \Vert p_t^\ast \Vert_{L^\infty H^2}^{\frac{d}{2}} \Vert p^\ast_t \Vert_{L^\infty H^1}^2\\
 +T \Vert p_t^\ast \Vert_{L^\infty L^2}^{2-\frac{d}{2}} \Vert p_t^\ast \Vert_{L^\infty H^2}^{\frac{d}{2}}  \Vert p_{tt}^\ast \Vert_{L^\infty L^2}^2
 \end{multlined}\\
\lesssim &\, T(1+ \| \Theta^\ast \|_{\mathcal{X}_\Theta^2}^2+ \| \Theta^\ast \|_{\mathcal{X}_\Theta^2}^{4+ 2 \gamma_4}) \Vert p^\ast \Vert_{\mathcal{X}_p^1}^{4-\frac{d}{2}} \Vert p^\ast \Vert_{\mathcal{X}_p^2}^{\frac{d}{2}}\\
\lesssim &\, T(1+ R_2^2+ R_2^{4+ 2 \gamma_4}) \eta^{4-\frac{d}{2}}R_1^{\frac{d}{2}}\\
\leq &\, T \eta^{4-\frac{d}{2}}R_1^{\frac{d}{2}} C_3(R_2).
\end{aligned}
\end{equation}
Since $p_t^\ast|_{\partial \Om}=0$, the estimate \eqref{Regularity_H^2_Heat} together with \eqref{g(0)_estimate} and  \eqref{g_estimate_1} lead  to 
\begin{equation} \label{theta_estimate_self_mapping} 
\begin{aligned}
\| \Theta \|_{\mathcal{X}_\Theta^2}^2 \leq &\, C_{\Theta, 2} \Big(\Vert \Theta_0 \Vert_{H^3}^2 + \Vert \nabla g(0) \Vert_{L^2}^2+\Vert \nabla g \Vert_{L^2 L^2}^2+\Vert  g_t \Vert_{L^2 L^2}^2 \Big)\\
\leq &\,C_{\Theta, 2}\Big( \Vert \Theta_0 \Vert_{H^3}^2+ C_g(1+ \| \Theta_0 \|_{H^3}^2+\| \Theta_0 \|_{H^3}^{4+2\gamma_4})\| p_1 \|_{H^2}^4\\
&\qquad\qquad  +  T \eta^{4-\frac{d}{2}}R_1^{\frac{d}{2}}C_3(R_2) \Big)
\end{aligned}
\end{equation}
As for the solution to the pressure equation in \eqref{fixed_point_sys} $p \in \mathcal{X}_p^2$, we know from Proposition \ref{prop1} that $p$ satisfies the estimate \eqref{main_estimate_E_p}, this when  combined with \eqref{estimate_F} results in 
\begin{equation} \label{p_estimate_self_mapping}
\begin{aligned}
 \Vert p \Vert_{\mathcal{X}_p^2}^2 \leq &\, \sup_{t \in (0,T)} E[p](t)+ \int_0^T D[p](s) \ds\\
\leq &\, C_{p,2} \Big(E[p](0)+C_f T \Big(\eta^4+ \eta^{4-\frac{d}{2}} R_1^{\frac{d}{2}}+ R_1^{\frac{d}{2}}(\eta^{4-\frac{d}{2}} +\eta^{2-\frac{d}{2}} R_1^{2})C_2(R_2) \Big)\Big)\\
&\times \exp\big((1+\sqrt{T}+T) C_1( R_2)\big).
\end{aligned}
\end{equation}
Let us denote
\begin{equation}
R_0^2:= E[p](0)+\Vert \Theta_0 \Vert_{H^3}^2+ C_g(1+ \| \Theta_0 \|_{H^3}^2+\| \Theta_0 \|_{H^3}^{4+2\gamma_4})\| p_1 \|_{H^2}^4.
\end{equation}
Therefore, in order to get the bounds 
\begin{equation} \label{p_theta_R}
\Vert p \Vert_{\mathcal{X}_p^2}^2 \leq R_1^2, \qquad \| \Theta \|_{\mathcal{X}_\Theta^2}^2 \leq R_2^2
\end{equation}
we first choose $R_2$ such that
%provided that $R_1>0$ is taken sufficiently large such that 
\begin{equation}
\begin{aligned}
&  2 C_{\Theta, 2} R_0 ^2\leq  R_2^2
\end{aligned}
\end{equation}
Once $R_2$ is fixed, we select $R_1$ large enough such that 
\begin{equation}
 2C_{p,2} R_0 ^2\exp\big((1+\sqrt{T}+T)C_1(R_2)\big) \leq R_1^2. 
\end{equation}
After that,  we take $\eta>0$ sufficiently small so as to satisfy
\begin{equation}
2 C_{\Theta, 2} T \eta^{4-\frac{d}{2}}R_1^{\frac{d}{2}}C_3(R_2) \leq R^2_2 
\end{equation}
and 
\begin{equation}
\begin{aligned}
2 C_{p,2} T  C_f\Big(\eta^4+ \eta^{4-\frac{d}{2}} R_1^{\frac{d}{2}}+ R_1^{\frac{d}{2}}(\eta^{4-\frac{d}{2}} +\eta^{2-\frac{d}{2}} R_1^{2})C_2(R_2) \Big)\\
\, \times \exp\big((1+\sqrt{T}+T) C_1(R_2)\big) \leq R_1^2.
\end{aligned}
\end{equation}
On the other hand, we know that the solution $p$ satisfies the estimate \eqref{Main_First_Order_Est} 
\begin{equation}
\begin{aligned}
\Vert p \Vert_{\mathcal{X}_p^1}^2  \leq &\, \sup_{t \in (0,T)} \{E_0[p](t)+E_1[p](t)\}+ \int_0^T D_0[p](s) \ds\\
\leq &\, C_{p,1}  \Big(E_0[p](0)+E_1[p](0)+T (\eta^4+ \eta^{4-\frac{d}{2}} R_1^{\frac{d}{2}})\Big)\exp\big((1+\sqrt{T}+T)C_1(R_2)\big),\\
\leq &\, C_{p,1}  (\eta_0^2 +C_f T (\eta^4+ \eta^{4-\frac{d}{2}} R_1^{\frac{d}{2}}))\exp\big((1+\sqrt{T}+T)C_1(R_2)\big),
\end{aligned}
\end{equation}   
which implies 
\begin{equation} \label{p_eta}
\Vert p \Vert_{\mathcal{X}_p^1} \leq  \eta
\end{equation}
as long as $0<\eta_0\ll \eta$ is chosen small enough and $\eta$ is reduced as needed. Consequently, we can ascertain that \eqref{p_theta_R} and  \eqref{p_eta} hold by suitably selecting $\eta_0,\eta, R_1, R_2>0$. This thus guarantees that the ball $B$ is invariant with respect to the mapping $\mathcal{T}$. This concludes the proof of Lemma \ref{Lemma_Self_Mapping}. 
\end{proof}
 
\textbf{Step 3. Showing the contraction Property.}
Having established  that  $\mathcal{T}$ is a self-mapping, we now proceed to  verify  the remaining hypotheses necessary  for  applying   the Banach contraction mapping theorem.  
More precisely, we need to ensure that the  mapping $\mathcal{T}$ is a contraction in a suitably chosen topology. Further, for this theorem to apply, the subset $B$ has to be closed in the aforementioned topology. Inspired by \cite{Kaltenbacher_Nikolic_2022}, we consider the metric space  
\begin{equation}
\textbf{Y}:=\big\{(p, \Theta): p \in \mathcal{X}_p^1, \, \Theta  \in L^\infty(0, T; H^2(\Om) \cap H^1_0(\Om)),\, \Theta_t \in L^\infty(0, T; H^1_0(\Om)) \big\}
\end{equation}
equipped with the metric induced by the norm
\begin{equation} \label{norm_Y}
\| (p, \Theta) \|_{\textbf{Y}}= \Big( \|p \|_{L^\infty H^1}^2+\|p_t \|_{L^\infty H^1}^2+ \|p_{tt} \|_{L^\infty L^2}^2+ \| \Theta \|_{L^\infty H^2}^2+\| \Theta_t \|_{L^\infty H^1}^2 \Big)^{\frac{1}{2}}. 
\end{equation} 
Clearly $\textbf{Y}$ is complete and $B \subset \textbf{Y}$. Moreover, $B$ is complete with respect to the norm \eqref{norm_Y}. To prove this claim, consider a sequence $\big((p_n, \Theta_n)\big)_{n \in \mathbb{N}} \subset B$ that is a Cauchy sequence in $\textbf{Y}$. The completeness of the latter implies that this sequence converges in $\textbf{Y}$ to some limit $(p, \Theta) \in \textbf{Y}$. Further, taking into account the boundedness of elements of the ball $B$, there exists a subsequence of $\big((p_n, \Theta_n)\big)_{n \in \mathbb{N}}$ which weakly-$\ast$ converges to $(\tilde{p}, \tilde{\Theta}) \in B$. From the uniqueness of the limit, we obtain $(p, \Theta)= (\tilde{p}, \tilde{\Theta}) \in B$.

In the following, we will prove that the mapping $\mathcal{T}: B \rightarrow B$ is strictly contractive with respect to the metric on $\textbf{Y}$.
\begin{lemma} \label{Lemma_Contraction}
Given $\tau>0$. There exist constants $C_1, C_4, C_6>0$ that do not depend on $T$, such that if the final time $T$ is short enough so as to satisfy 
\begin{equation}\label{}
\begin{aligned}
 T R_1^{\frac{d}{2}} &\, \Big(C_{p, 1}((R_1^{2-\frac{d}{2}}+\eta^{2-\frac{d}{2}}+\eta^{4-\frac{d}{2}}) C_4(R_2) \exp\big((1+\sqrt{T}+T) C_1(R_2)\big)\\
&+ C_{\Theta, 1}(\eta^{2-\frac{d}{2}}+\eta^{4-\frac{d}{2}}) C_6(R_1, R_2) \Big) <1
\end{aligned}
\end{equation}
then the mapping $\mathcal{T}$ is strictly contractive on $B$ with respect to the $\textbf{Y}$ topology. The constants $C_{p, 1}, C_{\Theta, 1}$ are as in estimates \eqref{Main_First_Order_Est} and  \eqref{estimate_theta_H2_1}.
\end{lemma}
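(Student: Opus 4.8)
The plan is to establish the one remaining hypothesis of Banach's fixed-point theorem: that $\mathcal{T}$ is a strict contraction on $B$ in the metric of $\textbf{Y}$. I would fix two pairs $(p^\ast_a,\Theta^\ast_a),(p^\ast_b,\Theta^\ast_b)\in B$, set $(p_a,\Theta_a):=\mathcal{T}(p^\ast_a,\Theta^\ast_a)$ and $(p_b,\Theta_b):=\mathcal{T}(p^\ast_b,\Theta^\ast_b)$, and introduce the differences $\widetilde p:=p_a-p_b$, $\widetilde\Theta:=\Theta_a-\Theta_b$, $\widetilde p^\ast:=p^\ast_a-p^\ast_b$, $\widetilde\Theta^\ast:=\Theta^\ast_a-\Theta^\ast_b$. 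Subtracting the two copies of \eqref{fixed_point_sys}, one sees that $\widetilde p$ solves the linear problem \eqref{Linearized_Eq_JMGT} with coefficients $r=h(\Theta^\ast_a)$, $b=\zeta(\Theta^\ast_a)$, vanishing initial data, and right-hand side
\begin{equation*}
\mathcal{F}:=\big(h(\Theta^\ast_a)-h(\Theta^\ast_b)\big)\Delta p_b+\big(\zeta(\Theta^\ast_a)-\zeta(\Theta^\ast_b)\big)\Delta p_{b,t}+2k(\Theta^\ast_a)\big((p^\ast_{a,t})^2+p^\ast_a p^\ast_{a,tt}\big)-2k(\Theta^\ast_b)\big((p^\ast_{b,t})^2+p^\ast_b p^\ast_{b,tt}\big),
\end{equation*}
while $\widetilde\Theta$ solves the linear problem \eqref{Heat_Linearized} with vanishing initial data and source $\mathcal{G}:=\phi(\Theta^\ast_a)(p^\ast_{a,t})^2-\phi(\Theta^\ast_b)(p^\ast_{b,t})^2$. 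Since the two pairs carry the same initial triple, all the differences and the relevant traces vanish at $t=0$; in particular $\mathcal{G}(0)=0$ and $\nabla\mathcal{G}(0)=0$. By the same computations as in the self-mapping step, $\mathcal{F}\in L^2(0,T;L^2(\Om))$ and $\mathcal{G}\in H^1(0,T;H^1_0(\Om))$, so Propositions \ref{lemma_lower_norms} and \ref{prop2} apply to $\widetilde p$ and $\widetilde\Theta$.

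Because the $\textbf{Y}$-topology only controls the $\mathcal{X}_p^1$-norm of the acoustic part and the norms $\|\Theta\|_{L^\infty H^2}$, $\|\Theta_t\|_{L^\infty H^1}$ of the thermal part, I would only invoke the lower-order linear bounds. Proposition \ref{lemma_lower_norms} with vanishing data gives $\|\widetilde p\|_{\mathcal{X}_p^1}^2\le C_{p,1}\,\|\mathcal{F}\|_{L^2L^2}^2\exp\!\big(\int_0^T\Lambda_0\big)$, and the bound $\int_0^T\Lambda_0\le(1+\sqrt T+T)C_1(R_2)$ from the self-mapping step (obtained from \eqref{estimates_b}, \eqref{estimates_r}, since $r,b$ are built from $\Theta^\ast_a\in B$) carries over verbatim. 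Estimate \eqref{estimate_theta_H2_1} with vanishing data and $\nabla\mathcal{G}(0)=0$ gives $\|\widetilde\Theta\|_{L^\infty H^2}^2+\|\widetilde\Theta_t\|_{L^\infty H^1}^2\le C_{\Theta,1}\,\|\mathcal{G}\|_{H^1L^2}^2$. Adding these and recalling \eqref{norm_Y},
\begin{equation*}
\|\mathcal{T}(p^\ast_a,\Theta^\ast_a)-\mathcal{T}(p^\ast_b,\Theta^\ast_b)\|_{\textbf{Y}}^2\le C_{p,1}e^{(1+\sqrt T+T)C_1(R_2)}\|\mathcal{F}\|_{L^2L^2}^2+C_{\Theta,1}\|\mathcal{G}\|_{H^1L^2}^2.
\end{equation*}

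The heart of the proof is to bound $\mathcal{F}$ and $\mathcal{G}$ by $\|(\widetilde p^\ast,\widetilde\Theta^\ast)\|_{\textbf{Y}}$ with the coefficient appearing in the statement. I would write each coefficient difference through the fundamental theorem of calculus, e.g. $h(\Theta^\ast_a)-h(\Theta^\ast_b)=\big(\int_0^1 h'((1-s)\Theta^\ast_b+s\Theta^\ast_a)\,\textup{d}s\big)\widetilde\Theta^\ast$, and bound it in $L^\infty_t L^\infty_x$ by a polynomial in $R_2$ — using the growth conditions \eqref{h'_assump}, \eqref{zeta}, \eqref{properties_k}, \eqref{phi_1}, \eqref{phi_inequalities}, the embedding $H^2(\Om)\hookrightarrow L^\infty(\Om)$, and $\|\Theta^\ast_a\|_{L^\infty H^2},\|\Theta^\ast_b\|_{L^\infty H^2}\le R_2$ — times $\|\widetilde\Theta^\ast\|_{L^\infty H^2}\le\|\widetilde\Theta^\ast\|_{\textbf{Y}}$; these factors multiply $\Delta p_b$, $\Delta p_{b,t}\in L^\infty_t L^2_x$, with norm $\le R_1$. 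The bilinear acoustic parts I would reorganize as $(p^\ast_{a,t})^2-(p^\ast_{b,t})^2=(p^\ast_{a,t}+p^\ast_{b,t})\widetilde p^\ast_t$ and $p^\ast_a p^\ast_{a,tt}-p^\ast_b p^\ast_{b,tt}=\widetilde p^\ast p^\ast_{a,tt}+p^\ast_b\widetilde p^\ast_{tt}$, estimating the products with $\|k\|\le k_1$ \eqref{k_1}, the embeddings $H^1(\Om)\hookrightarrow L^4(\Om)$ and $H^2(\Om)\hookrightarrow L^\infty(\Om)$, the interpolation inequalities \eqref{Agmon_Inequality} and \eqref{Lady_Inequality} to peel off the small lower-order norms $\|p^\ast_a\|_{\mathcal{X}_p^1},\|p^\ast_b\|_{\mathcal{X}_p^1},\|\widetilde p^\ast\|_{\mathcal{X}_p^1}\lesssim\eta$, and $L^\infty(0,T)\hookrightarrow L^2(0,T)$ to extract one factor of $\sqrt T$ per summand. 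This should yield
\begin{equation*}
\|\mathcal{F}\|_{L^2L^2}^2\lesssim TR_1^{d/2}\big(R_1^{2-d/2}+\eta^{2-d/2}+\eta^{4-d/2}\big)C_4(R_2)\,\|(\widetilde p^\ast,\widetilde\Theta^\ast)\|_{\textbf{Y}}^2,
\end{equation*}
and the same scheme applied to $\mathcal{G}$ and $\mathcal{G}_t$ (now with \eqref{phi_1}, \eqref{phi_inequalities}) gives $\|\mathcal{G}\|_{H^1L^2}^2\lesssim TR_1^{d/2}(\eta^{2-d/2}+\eta^{4-d/2})C_6(R_1,R_2)\|(\widetilde p^\ast,\widetilde\Theta^\ast)\|_{\textbf{Y}}^2$, the harmless universal constants being absorbed into $C_4,C_6$. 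Inserting these into the displayed inequality reproduces exactly the left-hand side of the smallness condition in the statement; hence that condition forces $\mathcal{T}$ to be a strict contraction on $B$, and since $B$ is complete in $\textbf{Y}$, the Banach fixed-point theorem then yields a unique fixed point, i.e. a solution of \eqref{Main_system_JMGT} in $B$.

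The hard part is the bookkeeping in the last paragraph. One must verify, term by term, that every summand carries a genuine power of $\sqrt T$ (so the contraction constant collapses as $T\to0$), that each pressure factor drawn from the lower-order norm is charged the small parameter $\eta$ rather than $R_1$ — this is where the interpolation inequalities enter, e.g. $\|p^\ast_a\|_{L^\infty L^\infty}\lesssim\|p^\ast_a\|_{L^\infty H^2}^{d/4}\|p^\ast_a\|_{L^\infty L^2}^{1-d/4}\lesssim R_1^{d/4}\eta^{1-d/4}$ — and that the dependence on $R_2$ stays polynomial. The delicate tension is that $\Delta p_b$ and $\Delta p_{b,t}$ carry no extra spatial integrability, which forces every coefficient difference into $L^\infty_x$, whereas in the $\textbf{Y}$-norm $\widetilde\Theta^\ast$ is controlled only in $L^\infty H^2$: the embedding $H^2\hookrightarrow L^\infty$ closes this with no room to spare. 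A further care point is that in $\mathcal{G}_t$ the factors $(p^\ast_{a,t})^2$, $(p^\ast_{b,t})^2$ must be placed in $L^2_t L^4_x$ via $p^\ast_{a,t},p^\ast_{b,t}\in L^\infty H^2\hookrightarrow L^\infty L^8$, valid for $d\le3$, since $H^1\hookrightarrow L^8$ fails in three dimensions.
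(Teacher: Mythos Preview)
Your proposal is correct and follows essentially the same route as the paper: you set up the difference system with zero data, apply Proposition \ref{lemma_lower_norms} for $\widetilde p$ and estimate \eqref{estimate_theta_H2_1} for $\widetilde\Theta$, handle the coefficient differences via the fundamental theorem of calculus and the growth assumptions, reorganize the bilinear pressure terms exactly as the paper does, and extract the $\sqrt{T}$ and $\eta$ factors through $L^\infty(0,T)\hookrightarrow L^2(0,T)$ and the interpolation inequalities \eqref{Agmon_Inequality}--\eqref{Lady_Inequality}. The only cosmetic variation is your suggested $L^8\times L^8$ splitting for $(p^\ast_t)^2$ in $\mathcal{G}_t$, whereas the paper uses an $L^\infty\times L^4$ split (placing one factor in $L^\infty$ via Agmon and the other in $L^4$ via $H^1\hookrightarrow L^4$); both close, and the paper's choice makes the $\eta$-extraction slightly more transparent.
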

\begin{proof}
Let $(p^\ast_1, \Theta^\ast_1), (p^\ast_2, \Theta^\ast_2) \in B$ and let $(p_1, \Theta_1), (p_2, \Theta_2)$ be the corresponding solutions to \eqref{fixed_point_sys}, meaning that 
\begin{equation}   
\mathcal{T}(p^\ast_1, \Theta^\ast_1)=(p_1, \Theta_1)\quad \text{and}\quad  \mathcal{T}(p^\ast_2, \Theta^\ast_2)=(p_2, \Theta_2).
\end{equation}
We can readily check that
\begin{equation}
\begin{aligned}
&\hat{p}=p_1-p_2, \qquad \hat{\Theta}=\Theta_1-\Theta_2,\\
&\hat{p}^\ast=p_1^\ast-p_2^\ast, \qquad \hat{\Theta}^\ast=\Theta_1^\ast-\Theta_2^\ast\\
\end{aligned}
\end{equation}
solve the following system
\begin{equation} \label{contraction_sys}
\left\{ 
\begin{aligned}
&\tau \hat{p}_{ttt}+\hat{p}_{tt}-h (\Theta_1^\ast)\Delta \hat{p} - \zeta(\Theta_1^\ast) \Delta \hat{p}_t = f_1, & \qquad &\\
& m \hat{\Theta}_t -\kappaa \Delta \hat{\Theta} + \ell \hat{\Theta} = g_1,
\end{aligned}
\text{in}\quad  \Omega \times (0,T),
\right.
\end{equation}
with homogeneous initial and boundary conditions
\begin{equation}
\begin{aligned}
&\hat{p}(x,0)=\hat{p}_t(x,0)=\hat{\Theta}(x,0)=0,  &\qquad & \text{in} \quad \ \Omega, \\
&\hat{p}=\hat{\Theta}=0, &\qquad &\text{on}\quad   \partial\Omega \times (0,T).
\end{aligned}
\end{equation}
The source terms $f_1$ and $g_1$  are given, respectively  as  
\begin{equation}\label{}
\begin{aligned}
f_1=&\,\big(h (\Theta_1^\ast)-h (\Theta_2^\ast)\big)\Delta p_2+\big(\zeta (\Theta_1^\ast)-\zeta (\Theta_2^\ast) \big) \Delta p_{2t} \\
&+2k (\Theta_1^\ast)\big((p^\ast_{1t})^2+p_1^\ast p_{1tt}^\ast \big)-2k (\Theta_2^\ast)\big((p^\ast_{2t})^2+p_2^\ast p_{2tt}^\ast \big)\\
=& \, (h (\Theta_1^\ast)-h (\Theta_2^\ast))\Delta p_2+\big(\zeta (\Theta_1^\ast)-\zeta (\Theta_2^\ast) \big) \Delta p_{2t} \\
&+2(k (\Theta_1^\ast)-k (\Theta_2^\ast))((p^\ast_{1t})^2+p_1^\ast p_{1tt}^\ast)+2k (\Theta_2^\ast)\big((p^\ast_{1t})^2-(p^\ast_{2t})^2+p_1^\ast p_{1tt}^\ast-p_2^\ast p_{2tt}^\ast\big)\\
:=&\,f_{11}+f_{12}+f_{13},
\end{aligned}
\end{equation}
with 
\begin{equation}
\left\{
\begin{aligned}
f_{11}=&\,(h (\Theta_1^\ast)-h (\Theta_2^\ast))\Delta p_2+\big(\zeta (\Theta_1^\ast)-\zeta (\Theta_2^\ast) \big) \Delta p_{2t},\\
f_{12}=&\, 2(k (\Theta_1^\ast)-k (\Theta_2^\ast))((p^\ast_{1t})^2+p_1^\ast p_{1tt}^\ast),\\
f_{13}=&\,2k (\Theta_2^\ast)\big((p^\ast_{1t})^2-(p^\ast_{2t})^2+p_1^\ast p_{1tt}^\ast-p_2^\ast p_{2tt}^\ast\big),
\end{aligned}
\right.
\end{equation}
and 
\begin{equation}
g_1=\Phi(\Theta_{1t}^\ast, p_{1t}^\ast)-\Phi(\Theta_{2t}^\ast, p_{2t}^\ast). 
\end{equation}
Notice that the presence of the higher-order derivatives in the component $f_{11}$ prevents from getting the contractivity of the mapping $\mathcal{T}$ with respect to the metric on $\textbf{X}$. Precisely, $(p^\ast_i, \Theta^\ast_i), (p_i, \Theta_i), i=1, 2$ belonging to $B$ does not give $f_1 \in H^1(0, T; L^2(\Om))$. That is why, in the following, we will show instead that $\mathcal{T}$ is a contraction with respect to the weaker topology on $\textbf{Y}$.

As a first step, we work out appropriate estimates for the source terms $f_1$ and $g_1$. We begin with the three components of $f_1$. We have 
\begin{equation} \label{f_11_estimate}
\begin{aligned}
 \| f_{11} \|_{L^2 L^2}=&\,\| (h (\Theta_1^\ast)-h (\Theta_2^\ast))\Delta p_2+\big(\zeta (\Theta_1^\ast)-\zeta (\Theta_2^\ast) \big) \Delta p_{2t} \|_{L^2 L^2}\\
\leq &\,  \|h (\Theta_1^\ast)-h (\Theta_2^\ast)\|_{L^\infty L^\infty} \|  \Delta p_2 \|_{L^2 L^2}\\
&+ \| \zeta (\Theta_1^\ast)-\zeta (\Theta_2^\ast) \|_{L^\infty L^\infty} \|  \Delta p_{2t} \|_{L^2 L^2}.
\end{aligned}
\end{equation}
We obtain an upper bound of the right-hand side by appealing to the properties of the functions $h$ and $\zeta$. Observe that
\begin{equation} \label{h_difference}
h (\Theta_1^\ast)-h (\Theta_2^\ast)=(\Theta_1^\ast-\Theta_2^\ast) \int_0^1 h '(\Theta_2^\ast+\sigma(\Theta_1^\ast-\Theta_2^\ast) ) \textup{d} \sigma.
\end{equation}
Taking into account \eqref{h'_assump} and the embedding $H^2(\Om) \hookrightarrow L^\infty(\Om)$, we find (see \cite{Nikolic_2022} for details)
\begin{equation}\label{h}
\begin{aligned}
\Vert h (\Theta_1^\ast)-h (\Theta_2^\ast) \Vert_{L^\infty L^\infty}&=\Big\Vert (\Theta_1^\ast-\Theta_2^\ast) \int_0^1 h '(\Theta_2^\ast+\sigma(\Theta_1^\ast-\Theta_2^\ast) ) \textup{d} \sigma \Big\Vert_{L^\infty L^\infty}\\
& \lesssim \Vert \Theta_1^\ast-\Theta_2^\ast \Vert_{L^\infty L^\infty} \Big(1+\Vert \Theta_2^\ast+\sigma(\Theta_1^\ast-\Theta_2^\ast) \Vert_{L^\infty L^\infty}^{1+\gamma_1} \Big)\\
& \lesssim \Vert (\hat{p}^\ast, \hat{\Theta}^\ast) \Vert_{\mathbf{Y}} \Big( 1+ \Vert \Theta_1^\ast \Vert_{L^\infty L^\infty}^{1+\gamma_1}+\Vert \Theta_2^\ast \Vert_{L^\infty L^\infty}^{1+\gamma_1}\Big).
\end{aligned}. 
\end{equation}
Similarly, we can show that
\begin{equation}\label{zeta_estimate}
\begin{aligned}
\Vert \zeta (\Theta_1^\ast)-\zeta(\Theta_2^\ast) \Vert_{L^\infty L^\infty} & \lesssim \Vert (\hat{p}^\ast, \hat{\Theta}^\ast) \Vert_{\mathbf{Y}} \Big( 1+ \Vert \Theta_1^\ast \Vert_{L^\infty L^\infty}^{1+\gamma_3}+\Vert \Theta_2^\ast \Vert_{L^\infty L^\infty}^{1+\gamma_3}\Big).
\end{aligned}
\end{equation}
Combining \eqref{h_difference}, \eqref{zeta_estimate} with \eqref{f_11_estimate} yields
\begin{equation} 
\begin{aligned}
 \| f_{11} \|_{L^2 L^2}
\lesssim &\,   \sqrt{T}\|  \Delta p_2 \|_{L^\infty L^2} \Big( 1+ \Vert \Theta_1^\ast \Vert_{L^\infty L^\infty}^{1+\gamma_1}+\Vert \Theta_2^\ast \Vert_{L^\infty L^\infty}^{1+\gamma_1}\Big) \Vert (\hat{p}^\ast, \hat{\Theta}^\ast) \Vert_{\mathbf{Y}} \\
&+  \sqrt{T} \|  \Delta p_{2t} \|_{L^\infty L^2} \Big( 1+ \Vert \Theta_1^\ast \Vert_{L^\infty L^\infty}^{1+\gamma_3}+\Vert \Theta_2^\ast \Vert_{L^\infty L^\infty}^{1+\gamma_3}\Big)  \Vert (\hat{p}^\ast, \hat{\Theta}^\ast) \Vert_{\mathbf{Y}}.
\end{aligned}
\end{equation}
Then recalling the fact that $\mathcal{T}$ maps $B$ into itself, we have
\begin{equation} \label{f_11_estimate_1}
\begin{aligned}
 \| f_{11} \|_{L^2 L^2}
\lesssim &\,   \sqrt{T} R_1 \big( 1+ R_2^{1+\gamma_1}+ R_2^{1+\gamma_3} \big) \Vert (\hat{p}^\ast, \hat{\Theta}^\ast) \Vert_{\mathbf{Y}}.
\end{aligned}
\end{equation}
The same ideas in \eqref{h_difference} and  \eqref{h} can be used again to give an analogous estimate for $k$; namely
\begin{equation}\label{k}
\begin{aligned}
\Vert k (\Theta_1^\ast)-k (\Theta_2^\ast) \Vert_{L^\infty L^\infty} & \lesssim \Vert (\hat{p}^\ast, \hat{\Theta}^\ast) \Vert_{\mathbf{Y}} \Big( 1+ \Vert \Theta_1^\ast \Vert_{L^\infty L^\infty}^{1+\gamma_2}+\Vert \Theta_2^\ast \Vert_{L^\infty L^\infty}^{1+\gamma_2}\Big).
\end{aligned}
\end{equation}
Hence, owing to the fact that $(p^\ast_i , \Theta^\ast_i) \in B, i=1, 2$, we have 
\begin{equation} \label{f_12}
\begin{aligned}
\| f_{12} \|_{L^2 L^2} = &\, \| 2(k (\Theta_1^\ast)-k (\Theta_2^\ast))((p^\ast_{1t})^2+p_1^\ast p_{1tt}^\ast) \|_{L^2 L^2}\\
\leq &\, 2 \| k (\Theta_1^\ast)-k (\Theta_2^\ast) \|_{L^\infty L^\infty} \big(\| p^\ast_{1t} \|_{L^\infty L^\infty} \| p^\ast_{1t} \|_{L^2 L^2} + \| p^\ast_{1} \|_{L^\infty L^\infty} \| p^\ast_{1tt} \|_{L^2 L^2} \big)\\
\lesssim &\, \sqrt{T} \big(\| p^\ast_{1t} \|_{L^\infty H^2}^{\frac{d}{4}} \| p^\ast_{1t} \|_{L^\infty L^2}^{1-\frac{d}{4}} \| p^\ast_{1t} \|_{L^\infty L^2}+\| p^\ast_{1} \|_{L^\infty H^2}^{\frac{d}{4}} \| p^\ast_{1} \|_{L^\infty L^2}^{1-\frac{d}{4}} \| p^\ast_{1tt} \|_{L^\infty L^2} \big)\\
& \times \Big( 1+ \Vert \Theta_1^\ast \Vert_{\mathcal{X}_\Theta^2}^{1+\gamma_2}+\Vert \Theta_2^\ast \Vert_{\mathcal{X}_\Theta^2}^{1+\gamma_2}\Big)\Vert (\hat{p}^\ast, \hat{\Theta}^\ast) \Vert_{\mathbf{Y}}\\
\lesssim &\, \sqrt{T} \| p^\ast_{1} \|_{\mathcal{X}_p^1}^{2-\frac{d}{4}} \| p^\ast_{1} \|_{\mathcal{X}_p^2}^{\frac{d}{4}}  \Big( 1+ \Vert \Theta_1^\ast \Vert_{\mathcal{X}_\Theta^2}^{1+\gamma_2}+\Vert \Theta_2^\ast \Vert_{\mathcal{X}_\Theta^2}^{1+\gamma_2}\Big)\Vert (\hat{p}^\ast, \hat{\Theta}^\ast) \Vert_{\mathbf{Y}}\\
\lesssim &\, \sqrt{T} \eta ^{2-\frac{d}{4}} R_1^{\frac{d}{4}}   (1+R_2^{1+\gamma_2})\Vert (\hat{p}^\ast, \hat{\Theta}^\ast) \Vert_{\mathbf{Y}}.
\end{aligned}
\end{equation}
Note that we reached the above estimate thanks to the interpolation inequality \eqref{Agmon_Inequality} and we have $T$ as  a prefactor from using the embedding $L^\infty(0, T) \hookrightarrow L^2(0, T)$.\\
The last component $f_{13}$ can be written as follows
\begin{equation}
\begin{aligned}
f_{13}=&\, 2k (\Theta_2^\ast)\big((p^\ast_{1t})^2-(p^\ast_{2t})^2+p_1^\ast p_{1tt}^\ast-p_2^\ast p_{2tt}^\ast\big)\\
=&\, 2k (\Theta_2^\ast)\big((p^\ast_{1t}-p^\ast_{2t})(p^\ast_{1t}+p^\ast_{2t})+(p_1^\ast-p_2^\ast) p_{1tt}^\ast+p_2^\ast(p_{1tt}^\ast- p_{2tt}^\ast)\big).
\end{aligned}
\end{equation}
On account of \eqref{k_1} and the embeddings $H^1(\Om) \hookrightarrow L^4(\Om)$, $L^\infty(0, T) \hookrightarrow L^2(0, T)$, we find
\begin{equation}
\begin{aligned}
\| f_{13} \|_{L^2 L^2} \leq &\, 2 \|k (\Theta_2^\ast)\|_{L^\infty L^\infty} \big( \|p^\ast_{1t}-p^\ast_{2t}\|_{L^\infty L^2} \|p^\ast_{1t}+p^\ast_{2t}\|_{L^2 L^\infty}\\
&+\|p_1^\ast-p_2^\ast\|_{L^\infty L^4} \| p_{1tt}^\ast\|_{L^2 L^4}+\| p_2^\ast \|_{L^\infty L^\infty} \| p_{1tt}^\ast- p_{2tt}^\ast\|_{L^2 L^2} \big)\\
\lesssim &\, k_1 \sqrt{T} \big( (\|p^\ast_{1t}\|_{L^\infty L^\infty}+ \|p^\ast_{2t}\|_{L^\infty L^\infty}) \|p^\ast_{1t}-p^\ast_{2t}\|_{L^\infty L^2}\\
&+  \| p_{1tt}^\ast\|_{L^\infty L^4} \|p_1^\ast-p_2^\ast\|_{L^\infty H^1}+ \| p_2^\ast \|_{L^\infty L^\infty} \| p_{1tt}^\ast- p_{2tt}^\ast\|_{L^\infty L^2} \big). 
%\leq &\, C_T k_1 (\| p^\ast_1 \|_{X_p}+\| p^\ast_2 \|_{X_p})\|p_1^\ast-p_2^\ast\|_{\mathfrak{X}_1}.
\end{aligned}
\end{equation}
Further, using Agmon's inequality \eqref{Agmon_Inequality} and Ladyzhenskaya's one \eqref{Lady_Inequality}, the above estimate becomes
\begin{equation}
\begin{aligned}
\| f_{13} \|_{L^2 L^2}  
\lesssim &\,  \sqrt{T} \left( \Big(\sum_{i=1}^2\|p^\ast_{it}\|_{L^\infty L^2}^{1-\frac{d}{4}} \|p^\ast_{it}\|_{L^\infty H^2}^{\frac{d}{4}}\Big) \|p^\ast_{1t}-p^\ast_{2t}\|_{L^\infty H^1}\right.\\
&\left.+  \| p_{1tt}^\ast\|_{L^\infty L^2}^{1-\frac{d}{4}} \| p_{1tt}^\ast\|_{L^\infty H^1}^{\frac{d}{4}}\|p_1^\ast-p_2^\ast\|_{L^\infty H^1}\right.\\
&\left.+ \| p_2^\ast \|_{L^\infty L^2}^{1-\frac{d}{4}} \| p_2^\ast \|_{L^\infty H^2}^{\frac{d}{4}} \| p_{1tt}^\ast- p_{2tt}^\ast\|_{L^\infty L^2} \right)\\
\lesssim &\, \sqrt{T} \Big(\sum_{i=1}^2\| p^\ast_{i} \|_{\mathcal{X}_p^1}^{1-\frac{d}{4}} \| p^\ast_{i} \|_{\mathcal{X}_p^2}^{\frac{d}{4}}  \Big) \| \hat{p}^\ast \|_{\mathcal{X}_p^1}.
\end{aligned}
\end{equation}
Therefore,
\begin{equation} \label{f_13}
\| f_{13} \|_{L^2 L^2} \lesssim \sqrt{T}  \eta^{1-\frac{d}{4}} R_1^{\frac{d}{4}} \Vert (\hat{p}^\ast, \hat{\Theta}^\ast) \Vert_{\mathbf{Y}}.
\end{equation}
Consequently,  from the estimates \eqref{f_11_estimate_1}, \eqref{f_12} and \eqref{f_13}, we conclude
\begin{equation} \label{f_1_estimate}
\| f_{1} \|_{L^2 L^2}^2 \lesssim T R_1^{\frac{d}{2}}(R_1^{2-\frac{d}{2}}+\eta^{2-\frac{d}{2}} + \eta^{4-\frac{d}{2}}) C_4(R_2) \Vert (\hat{p}^\ast, \hat{\Theta}^\ast) \Vert_{\mathbf{Y}}^2.
\end{equation}
Next, we estimate the forcing term in the Pennes equation in \eqref{contraction_sys}. For the sake of clarity, we rewrite the formula for $\Phi$ where $\phi$ is given in \eqref{funct_k}
$$\Phi(\Theta^\ast, p^\ast_t)=\phi(\Theta^\ast) (p_t^\ast)^2.$$
The function $g_1$ can be recast as 
\begin{equation}
\begin{aligned}
g_1= &\, (\phi(\Theta^\ast_1)-\phi(\Theta^\ast_2)) (p_{1t}^\ast)^2+ \phi(\Theta^\ast_2)((p_{1t}^\ast)^2-(p_{2t}^\ast)^2)\\
= &\, (\phi(\Theta^\ast_1)-\phi(\Theta^\ast_2)) (p_{1t}^\ast)^2+ \phi(\Theta^\ast_2)(p_{1t}^\ast-p_{2t}^\ast)(p_{1t}^\ast+p_{2t}^\ast).
\end{aligned}
\end{equation}
We can use the same trick in \eqref{h_difference} to obtain a similar estimate to \eqref{h} for the difference $\phi(\Theta^\ast_1)-\phi(\Theta^\ast_2)$. Indeed, thanks to \eqref{phi_inequalities}, we find the following bound
\begin{equation} \label{phi_difference}
\begin{aligned}
\| \phi(\Theta^\ast_1)-\phi(\Theta^\ast_2) \|_{L^\infty L^\infty} \lesssim \Vert (\hat{p}^\ast, \hat{\Theta}^\ast) \Vert_{\mathbf{Y}} \Big( 1+ \Vert \Theta_1^\ast \Vert_{L^\infty L^\infty}^{1+\gamma_4}+\Vert \Theta_2^\ast \Vert_{L^\infty L^\infty}^{1+\gamma_4}\Big),
\end{aligned}
\end{equation}
which along with \eqref{phi_1} the embedding $L^\infty(0, T) \hookrightarrow L^2(0, T)$ will allow us to get 
\begin{equation}
\begin{aligned}
\Vert g_1 \Vert_{L^2 L^2}=&\,\| (\phi(\Theta^\ast_1)-\phi(\Theta^\ast_2)) (p_{1t}^\ast)^2+ \phi(\Theta^\ast_2)(p_{1t}^\ast-p_{2t}^\ast)(p_{1t}^\ast+p_{2t}^\ast) \|_{L^2 L^2}\\
\leq  &\, \| \phi(\Theta^\ast_1)-\phi(\Theta^\ast_2) \|_{L^\infty L^\infty} \| p_{1t}^\ast \|_{L^\infty L^\infty} \| p_{1t}^\ast\|_{L^2 L^2} \\
&\,+ \|\phi(\Theta^\ast_2) \|_{L^\infty L^\infty} \|p_{1t}^\ast-p_{2t}^\ast \|_{L^2 L^2} \|p_{1t}^\ast+p_{2t}^\ast \|_{L^\infty L^\infty}\\
\lesssim &\, \sqrt{T} \| p_{1t}^\ast \|_{L^\infty L^\infty} \| p_{1t}^\ast\|_{L^\infty L^2} \Big( 1+ \Vert \Theta_1^\ast \Vert_{L^\infty L^\infty}^{1+\gamma_4}+\Vert \Theta_2^\ast \Vert_{L^\infty L^\infty}^{1+\gamma_4}\Big) \Vert (\hat{p}^\ast, \hat{\Theta}^\ast) \Vert_{\mathbf{Y}}\\
&\,+ \sqrt{T} \phi_1 ( \|p_{1t}^\ast \|_{L^\infty L^\infty}+ \|p_{2t}^\ast \|_{L^\infty L^\infty}) \|p_{1t}^\ast-p_{2t}^\ast \|_{L^\infty H^1}.
\end{aligned}
\end{equation}
Applying inequality \eqref{Agmon_Inequality} yields
\begin{equation}\label{estimate_g_1}
\begin{aligned}
&\Vert g_1 \Vert_{L^2 L^2}\\ 
\lesssim &\, \sqrt{T} \| p_{1t}^\ast \|_{L^\infty L^2}^{1-\frac{d}{4}} \| p_{1t}^\ast \|_{L^\infty H^2}^{\frac{d}{4}} \| p_{1t}^\ast\|_{L^\infty L^2} \Big( 1+ \Vert \Theta_1^\ast \Vert_{L^\infty L^\infty}^{1+\gamma_4}+\Vert \Theta_2^\ast \Vert_{L^\infty L^\infty}^{1+\gamma_4}\Big) \Vert (\hat{p}^\ast, \hat{\Theta}^\ast) \Vert_{\mathbf{Y}}\\
&\,+ \sqrt{T} \phi_1 \big(\|p_{1t}^\ast \|_{L^\infty L^2}^{1-\frac{d}{4}}\|p_{1t}^\ast \|_{L^\infty H^2}^{\frac{d}{4}}+\|p_{2t}^\ast \|_{L^\infty L^2}^{1-\frac{d}{4}} \| p_{2t}^\ast \|_{L^\infty H^2}^{\frac{d}{4}} \big) \|\hat{p}^\ast \|_{\mathcal{X}_p^1}\\  
\lesssim &\, \sqrt{T} \eta^{1-\frac{d}{4}} R_1^{\frac{d}{4}}(1+R_1+R_1R_2^{1+\gamma_4})\Vert (\hat{p}^\ast, \hat{\Theta}^\ast) \Vert_{\mathbf{Y}}.
\end{aligned}
\end{equation}
Likewise, we have
\begin{equation}
\begin{aligned}
g_{1t}= &\, \phi'(\Theta^\ast_1) \Theta^\ast_{1t}(p_{1t}^\ast)^2-\phi'(\Theta^\ast_2) \Theta^\ast_{2t}(p_{2t}^\ast)^2+2\phi(\Theta^\ast_1) p_{1t}^\ast p_{1tt}^\ast-2\phi(\Theta^\ast_2) p_{2t}^\ast p_{2tt}^\ast\\
= &\, 
\begin{multlined}[t]
\phi'(\Theta^\ast_1) \Theta^\ast_{1t}(p_{1t}^\ast-p_{2t}^\ast)(p_{1t}^\ast+p_{2t}^\ast)+\phi'(\Theta^\ast_1)(p_{2t}^\ast)^2( \Theta^\ast_{1t}-\Theta^\ast_{2t})\\
+(\phi'(\Theta^\ast_1)-\phi'(\Theta^\ast_2))\Theta^\ast_{2t}(p_{2t}^\ast)^2 +2 \phi(\Theta^\ast_1)p_{1t}^\ast(p_{1tt}^\ast-p_{2tt}^\ast)\\
+ 2 \phi(\Theta^\ast_1)(p_{1t}^\ast-p_{2t}^\ast)p_{2tt}^\ast+ 2 (\phi(\Theta^\ast_1)-\phi(\Theta^\ast_2)) p_{2t}^\ast p_{2tt}^\ast\\
\vspace{-0.4cm}
\end{multlined}\\
:=&\,g_{1t}^{(1)}+g_{1t}^{(2)}+g_{1t}^{(3)}
\end{aligned}
\end{equation}
with
\begin{equation}
\left\{\hspace{0.1cm}
\begin{aligned}
g_{1t}^{(1)}=&\,\phi'(\Theta^\ast_1) \Theta^\ast_{1t}(p_{1t}^\ast-p_{2t}^\ast)(p_{1t}^\ast+p_{2t}^\ast)+\phi'(\Theta^\ast_1)(p_{2t}^\ast)^2( \Theta^\ast_{1t}-\Theta^\ast_{2t});\\
g_{1t}^{(2)}=&\, 2 \phi(\Theta^\ast_1)p_{1t}^\ast(p_{1tt}^\ast-p_{2tt}^\ast)+2 \phi(\Theta^\ast_1)(p_{1t}^\ast-p_{2t}^\ast)p_{2tt}^\ast;\\
g_{1t}^{(3)}=&\,(\phi'(\Theta^\ast_1)-\phi'(\Theta^\ast_2))\Theta^\ast_{2t}(p_{2t}^\ast)^2+2 (\phi(\Theta^\ast_1)-\phi(\Theta^\ast_2)) p_{2t}^\ast p_{2tt}^\ast.
\end{aligned}
\right. 
\end{equation}
We begin by estimating the first component $g_{1t}^{(1)}$. By using H\"{o}lder's inequality together with \eqref{phi_inequalities} and the embeddings $H^1(\Om) \hookrightarrow L^4(\Om)$, $L^\infty(0, T) \hookrightarrow L^2(0, T)$, it follows
\begin{equation}
\begin{aligned}
\Vert g_{1t}^{(1)} \Vert_{L^2 L^2} \leq &\, \| \phi'(\Theta^\ast_1) \|_{L^\infty L^\infty} \|\Theta^\ast_{1t} \|_{L^\infty L^4} \|p_{1t}^\ast-p_{2t}^\ast\|_{L^2 L^4} \|p_{1t}^\ast+p_{2t}^\ast\|_{L^\infty L^\infty}\\
& +\|\phi'(\Theta^\ast_1)\| _{L^\infty L^\infty} \|p_{2t}^\ast \|_{L^\infty L^\infty} \|p_{2t}^\ast \|_{L^2 L^4} \| \Theta^\ast_{1t}-\Theta^\ast_{2t} \|_{L^\infty L^4}\\
\lesssim &\, \sqrt{T}(1+ \Vert \Theta_1^\ast \Vert_{L^\infty L^\infty}^{1+\gamma_4}) \|\Theta^\ast_{1t} \|_{L^\infty H^1} (\|p_{1t}^\ast\|_{L^\infty L^\infty}+ \|p_{2t}^\ast\|_{L^\infty L^\infty}) \|p_{1t}^\ast-p_{2t}^\ast\|_{L^\infty H^1}\\
&+\sqrt{T}(1+ \Vert \Theta_1^\ast \Vert_{L^\infty L^\infty}^{1+\gamma_4}) \|p_{2t}^\ast \|_{L^\infty L^\infty} \|p_{2t}^\ast \|_{L^\infty H^1} \| \Theta^\ast_{1t}-\Theta^\ast_{2t} \|_{L^\infty H^1}.
\end{aligned}
\end{equation}
We apply \eqref{Agmon_Inequality} to further estimate the right-hand side of the inequality above. Thus, we have 
 \begin{equation} \label{g_11}
\begin{aligned}
\Vert g_{1t}^{(1)} \Vert_{L^2 L^2} 
\lesssim &\, \sqrt{T}(1+ \Vert \Theta_1^\ast \Vert_{L^\infty L^\infty}^{1+\gamma_4}) \|\Theta^\ast_{1t} \|_{L^\infty H^1} ( \sum_{i=1}^2\|p_{it}^\ast\|_{L^\infty L^2}^{1-\frac{d}{4}} \|p_{it}^\ast\|_{L^\infty H^2}^{\frac{d}{4}}) \|\hat{p}^\ast\|_{\mathcal{X} _p^1}\\
&+\sqrt{T} \|p_{2t}^\ast\|_{L^\infty L^2}^{1-\frac{d}{4}} \|p_{2t}^\ast\|_{L^\infty H^2}^{\frac{d}{4}} \|p_{2t}^\ast \|_{L^\infty H^1} (1+ \Vert \Theta_1^\ast \Vert_{L^\infty L^\infty}^{1+\gamma_4}) \| \hat{\Theta}^\ast_{t} \|_{L^\infty H^1}\\
\lesssim &\, \sqrt{T} \eta^{1-\frac{d}{4}} R_1^{\frac{d}{4}}(1+R_1^{1+\gamma_4})(R_1+R_2)\Vert (\hat{p}^\ast, \hat{\Theta}^\ast) \Vert_{\mathbf{Y}}.  
\end{aligned}
\end{equation}
The last bound above follows using the fact that $(p_{i}^\ast, \Theta_{i}^\ast) \in B,\, i=1, 2$.\\
Similarly, we obtain an estimate for $g_{1t}^{(2)}$. Thanks to \eqref{phi_1} and the same embeddings, we have
\begin{equation}
\begin{aligned}
\Vert g_{1t}^{(2)} \Vert_{L^2 L^2} \leq &\, 2 \| \phi(\Theta^\ast_1) \|_{L^\infty L^\infty} \|p_{1t}^\ast \|_{L^\infty L^\infty} \|p_{1tt}^\ast-p_{2tt}^\ast \|_{L^2 L^2}\\
&+ 2 \|\phi(\Theta^\ast_1) \|_{L^\infty L^\infty} \|p_{1t}^\ast-p_{2t}^\ast \|_{L^\infty L^4} \|p_{2tt}^\ast \|_{L^2 L^4}\\
\lesssim &\, \sqrt{T} \phi_1 \|p_{1t}^\ast \|_{L^\infty L^\infty} \|p_{1tt}^\ast-p_{2tt}^\ast \|_{L^\infty L^2}\\
&+  \sqrt{T} \phi_1   \|p_{2tt}^\ast \|_{L^\infty L^4} \|p_{1t}^\ast-p_{2t}^\ast \|_{L^\infty H^1}. \\
\end{aligned}
\end{equation} 
Again, taking advantage of \eqref{Agmon_Inequality} and \eqref{Lady_Inequality},  gives
 \begin{equation} \label{g_12}
\begin{aligned}
\Vert g_{1t}^{(2)} \Vert_{L^2 L^2} \lesssim &\, \sqrt{T} \Big(\|p_{1t}^\ast\|_{L^\infty L^2}^{1-\frac{d}{4}} \|p_{1t}^\ast\|_{L^\infty H^2}^{\frac{d}{4}}+ \|p_{2tt}^\ast\|_{L^\infty L^2}^{1-\frac{d}{4}} \|p_{2tt}^\ast\|_{L^\infty H^1}^{\frac{d}{4}} \Big) \|\hat{p}^\ast\|_{\mathcal{X} _p^1}\\
\lesssim &\, \sqrt{T} \eta^{1-\frac{d}{4}} R_1^{\frac{d}{4}} \Vert (\hat{p}^\ast, \hat{\Theta}^\ast) \Vert_{\mathbf{Y}}.
\end{aligned}
\end{equation}
The one remaining contribution $g_{1t}^{(3)}$ can be treated in a similar way. First, by relying on the same tools  as above, we have 
\begin{equation} \label{g1_component_3}
\begin{aligned}
\Vert g_{1t}^{(3)} \Vert_{L^2 L^2}\leq &\, 
\|\phi'(\Theta^\ast_1)-\phi'(\Theta^\ast_2) \|_{L^\infty L^\infty}  \| \Theta^\ast_{2t} \|_{L^\infty L^4} \|p_{2t}^\ast \|_{L^\infty L^\infty} \|p_{2t}^\ast \|_{L^2 L^4}\\
& + 2 \|\phi(\Theta^\ast_1)-\phi(\Theta^\ast_2) \|_{L^\infty L^\infty} \| p_{2t}^\ast \|_{L^\infty L^\infty} \|p_{2tt}^\ast\|_{L^2 L^2}\\
\lesssim &\, \sqrt{T}\|\phi'(\Theta^\ast_1)-\phi'(\Theta^\ast_2) \|_{L^\infty L^\infty}  \| \Theta^\ast_{2t} \|_{L^\infty H^1} \|p_{2t}^\ast \|_{L^\infty L^\infty} \|p_{2t}^\ast \|_{L^\infty H^1}\\
& + \sqrt{T} \|\phi(\Theta^\ast_1)-\phi(\Theta^\ast_2) \|_{L^\infty L^\infty} \| p_{2t}^\ast \|_{L^\infty L^\infty} \|p_{2tt}^\ast\|_{L^\infty L^2}.
\end{aligned}
\end{equation}
This shows that we need an estimate for the difference $\phi'(\Theta^\ast_1)-\phi'(\Theta^\ast_2)$. To acquire such a bound, we write
\begin{equation} \label{}
\phi' (\Theta_1^\ast)-\phi'(\Theta_2^\ast)=(\Theta_1^\ast-\Theta_2^\ast) \int_0^1 \phi''(\Theta_2^\ast+\sigma(\Theta_1^\ast-\Theta_2^\ast) ) \textup{d} \sigma.
\end{equation}
On account of \eqref{phi_inequalities} and the embedding $H^2(\Om) \hookrightarrow L^\infty(\Om)$, we have 
\begin{equation}\label{phi_derivative}
\begin{aligned}
\Vert \phi' (\Theta_1^\ast)-\phi'(\Theta_2^\ast) \Vert_{L^\infty L^\infty}&=\Big\Vert (\Theta_1^\ast-\Theta_2^\ast) \int_0^1 \phi''(\Theta_2^\ast+\sigma(\Theta_1^\ast-\Theta_2^\ast) ) \textup{d} \sigma \Big\Vert_{L^\infty L^\infty}\\
& \lesssim \Vert \Theta_1^\ast-\Theta_2^\ast \Vert_{L^\infty L^\infty} \Big(1+\Vert \Theta_2^\ast+\sigma(\Theta_1^\ast-\Theta_2^\ast) \Vert_{L^\infty L^\infty}^{\gamma_4} \Big)\\
& \lesssim \Vert (\hat{p}^\ast, \hat{\Theta}^\ast) \Vert_{\mathbf{Y}} \Big( 1+ \Vert \Theta_1^\ast \Vert_{L^\infty L^\infty}^{\gamma_4}+\Vert \Theta_2^\ast \Vert_{L^\infty L^\infty}^{\gamma_4}\Big).
\end{aligned}. 
\end{equation}
By incorporating \eqref{phi_difference} and  \eqref{phi_derivative} into  the estimate \eqref{g1_component_3}, we find
\begin{equation} \label{}
\begin{aligned}
& \Vert g_{1t}^{(3)} \Vert_{L^2 L^2}\\
\lesssim  
& \sqrt{T}\| \Theta^\ast_{2t} \|_{L^\infty H^1} \|p_{2t}^\ast \|_{L^\infty L^\infty} \|p_{2t}^\ast \|_{L^\infty H^1} \Big( 1+ \Vert \Theta_1^\ast \Vert_{L^\infty L^\infty}^{\gamma_4}+\Vert \Theta_2^\ast \Vert_{L^\infty L^\infty}^{\gamma_4}\Big)\Vert (\hat{p}^\ast, \hat{\Theta}^\ast) \Vert_{\mathbf{Y}} \\
&+ \sqrt{T}  \| p_{2t}^\ast \|_{L^\infty L^\infty} \|p_{2tt}^\ast\|_{L^\infty L^2} \Big( 1+ \Vert \Theta_1^\ast \Vert_{L^\infty L^\infty}^{1+\gamma_4}+\Vert \Theta_2^\ast \Vert_{L^\infty L^\infty}^{1+\gamma_4}\Big) \Vert (\hat{p}^\ast, \hat{\Theta}^\ast) \Vert_{\mathbf{Y}}. 
\end{aligned}
\end{equation}
Hence, from \eqref{Agmon_Inequality} and the fact that $(p_{i}^\ast, \Theta_{i}^\ast) \in B, i=1, 2$, we deduce
\begin{equation} \label{g_13}
\begin{aligned}
& \Vert g_{1t}^{(3)} \Vert_{L^2 L^2}\\
\lesssim  
& \sqrt{T}\| \Theta^\ast_{2t} \|_{L^\infty H^1} \|p_{2t}^\ast\|_{L^\infty L^2}^{1-\frac{d}{4}} \|p_{2t}^\ast\|_{L^\infty H^2}^{\frac{d}{4}} \|p_{2t}^\ast \|_{L^\infty H^1} \\
&\,\times\Big( 1+ \Vert \Theta_1^\ast \Vert_{L^\infty L^\infty}^{\gamma_4}+\Vert \Theta_2^\ast \Vert_{L^\infty L^\infty}^{\gamma_4}\Big)\Vert (\hat{p}^\ast, \hat{\Theta}^\ast) \Vert_{\mathbf{Y}} \\
&+ \sqrt{T} \Big( 1+ \Vert \Theta_1^\ast \Vert_{L^\infty L^\infty}^{1+\gamma_4}+\Vert \Theta_2^\ast \Vert_{L^\infty L^\infty}^{1+\gamma_4}\Big) \|p_{2t}^\ast\|_{L^\infty L^2}^{1-\frac{d}{4}} \|p_{2t}^\ast\|_{L^\infty H^2}^{\frac{d}{4}} \|p_{2tt}^\ast\|_{L^\infty L^2} \Vert (\hat{p}^\ast, \hat{\Theta}^\ast) \Vert_{\mathbf{Y}}\\
\lesssim &\, \sqrt{T} \eta^{2-\frac{d}{4}} R_1^{\frac{d}{4}}(1+R_2+R_2^{1+\gamma_4})\Vert (\hat{p}^\ast, \hat{\Theta}^\ast) \Vert_{\mathbf{Y}}.
\end{aligned}
\end{equation}
By putting together the estimates \eqref{g_11}, \eqref{g_12} and \eqref{g_13}, we infer that
\begin{equation} \label{estimate_g_1t}
\begin{aligned}
 \Vert g_{1t} \Vert_{L^2 L^2}^2 \lesssim &\, \Vert g_{1t}^{(1)} \Vert_{L^2 L^2}^2+\Vert g_{1t}^{(2)} \Vert_{L^2 L^2}^2+\Vert g_{1t}^{(3)} \Vert_{L^2 L^2}^2\\
\leq &\, T R_1^{\frac{d}{2}} (\eta^{2-\frac{d}{2}}+\eta^{4-\frac{d}{2}}) C_5(R_1, R_2)\Vert (\hat{p}^\ast, \hat{\Theta}^\ast) \Vert_{\mathbf{Y}}^2.
\end{aligned} 
\end{equation}
We emphasize that the constant $C_5$ depends on $R_1, R_2$ but not on $T$ and $\eta$.

Now, we call on two estimates from Propositions \ref{lemma_lower_norms}, \ref{prop2}. We know from the proof of the self-mapping property that the coefficients $h(\Theta^\ast_1), \zeta(\Theta^\ast_1)$ in the problem \eqref{contraction_sys} fulfill  the assumptions of Proposition \ref{lemma_lower_norms}, and that the function $\Lambda_0$ in \eqref{lambda_0} satisfies 
\begin{equation}
\| \Lambda_0 \|_{L^1(0, T)} \leq (1+\sqrt{T}+T) C_1(R_2).
\end{equation}
Then the estimate \eqref{Main_First_Order_Est} holds for the solution $\hat{p}$, that is
\begin{equation}
\begin{aligned}
\| \hat{p} \|_{\mathcal{X}^1_p}^2 \leq &\, C_{p, 1} \int_0^t \| f_1(s) \|_{L^2}^2 \exp \Big(  \int_s^t \Lambda_0(r) \textup{d} r \Big) \ds\\
\leq &\, C_{p, 1} T R_1^{\frac{d}{2}}(R_1^{2-\frac{d}{2}}+\eta^{2-\frac{d}{2}}+\eta^{4-\frac{d}{2}}) C_4(R_2) \exp\big((1+\sqrt{T}+T) C_1(R_2)\big) \Vert (\hat{p}^\ast, \hat{\Theta}^\ast) \Vert_{\mathbf{Y}}^2.
\end{aligned}
\end{equation}
The last bound above is attained by making use of \eqref{f_1_estimate}.\\
On the other hand, recall that $\Theta^\ast_1(0)=\Theta^\ast_2(0)=\Theta_0$, $p^\ast_{1t}(0)=p^\ast_{2t}(0)=p_1$, which implies that $g_1(x,0)=0$. Thus, the estimate \eqref{estimate_theta_H2_1} in the proof of Proposition \ref{prop2}, along with \eqref{estimate_g_1} and  \eqref{estimate_g_1t} gives
\begin{equation}
\begin{aligned}
\|  \hat{\Theta} \|_{L^\infty H^2}^2+ \|  \hat{\Theta}_t \|_{L^\infty H^1}^2 \leq &\, C_{\Theta, 1} \|g_1 \|_{H^1 L^2}^2\\
\leq &\, C_{\Theta, 1} T R_1^{\frac{d}{2}} (\eta^{2-\frac{d}{2}}+\eta^{4-\frac{d}{2}})C_6(R_1, R_2)\Vert (\hat{p}^\ast, \hat{\Theta}^\ast) \Vert_{\mathbf{Y}}^2, 
\end{aligned}
\end{equation}
where the constant $C_6$ does not depend on the final time $T$.\\
Consequently, we obtain  
\begin{equation}
\begin{aligned}
 \| \mathcal{T}(p^\ast_1, \Theta^\ast_1) &\,-\mathcal{T}(p^\ast_2, \Theta^\ast_2) \|_{\mathbf{Y}}^2\\
=&\, \| (\hat{p}, \hat{\Theta}) \|_{\mathbf{Y}}^2\\
=&\, \| \hat{p} \|_{\mathcal{X}^1_p}^2+\|  \hat{\Theta} \|_{L^\infty H^2}^2+ \|  \hat{\Theta}_t \|_{L^\infty H^1}^2\\
\leq &\, \Big(C_{p, 1} T R_1^{\frac{d}{2}}(R_1^{2-\frac{d}{2}}+\eta^{2-\frac{d}{2}}+\eta^{4-\frac{d}{2}}) C_4(R_2) \exp\big((1+\sqrt{T}+T) C_1(R_2)\big)\\
&+ C_{\Theta, 1} T R_1^{\frac{d}{2}} (\eta^{2-\frac{d}{2}}+\eta^{4-\frac{d}{2}}) C_6(R_1, R_2) \Big) \Vert (\hat{p}^\ast, \hat{\Theta}^\ast) \Vert_{\mathbf{Y}}^2,
\end{aligned}
\end{equation}
which shows that to ensure that the mapping $\mathcal{T}$ is a contraction, the final time $T$ has to be short enough so as 
\begin{equation}\label{Inequalti_smallness_T}
\begin{aligned}
 T R_1^{\frac{d}{2}} &\, \Big(C_{p, 1}((R_1^{2-\frac{d}{2}}+\eta^{2-\frac{d}{2}}+\eta^{4-\frac{d}{2}}) C_4(R_2) \exp\big((1+\sqrt{T}+T) C_1(R_2)\big)\\
&+ C_{\Theta, 1}(\eta^{2-\frac{d}{2}}+\eta^{4-\frac{d}{2}}) C_6(R_1, R_2) \Big) <1
\end{aligned}
\end{equation}
Lastly, by going over the proof, we can see that the prefactor $R_1^{\frac{d}{2}}$ is due to the higher-order norms of the acoustic pressure. Thus, the above inequality could also be true if the final time $T$ is fixed and the smallness is imposed on the size of the higher-order norms of the pressure $p$. From the practical point of view, it would be better to assume smallness on the lower norms (i.e., on $\eta$), yet this seems unattainable in the current setting owing to the presence of the higher-order derivatives in $f_{11}$. This completes the proof of Lemma \ref{Lemma_Contraction}.
\end{proof}
As a consequence of the two Lemmas \ref{Lemma_Self_Mapping}, \ref{Lemma_Contraction}, the mapping $\mathcal{T}$ is invariant with respect to the subset $B$ and is a contraction in the $\textbf{Y}$ topology. In addition, $B$ is closed in $\textbf{Y}$. Thus based on Banach fixed-point theorem, the mapping $\mathcal{T}$ admits a unique fixed-point $(p, \Theta) \in B$ which, by looking at the definition of $\mathcal{T}$, implies that $(p, \Theta)$ is the unique solution to \eqref{Main_system_JMGT} in $B$. Furthermore, the solution depends continuously on the data. To see this, we appeal to the following estimates \eqref{interm_estimate}, \eqref{Regularity_H^2_Heat} written as follows
\begin{equation} \label{est_p}
\begin{aligned}
& \|p(t) \|_{H^2}^2+\|p_t(t) \|_{H^2}^2+ \tau \|p_{tt}(t) \|_{H^1}^2 \\
\lesssim &\, \|p_0 \|_{H^2}^2+\|p_1 \|_{H^2}^2+ \tau \|p_{2} \|_{H^1}^2+ \int_0^t \Lambda(s) \big(\|p(s) \|_{H^2}^2+\|p_t(s) \|_{H^2}^2+ \tau \|p_{tt}(s) \|_{H^1}^2\big) \ds\\
&+\int_0^t (\|f(s) \|^2_{L^2}+\|\nabla f(s) \|^2_{L^2}) \ds ,
\end{aligned}
\end{equation} 
and 
\begin{equation} \label{est_theta}
\begin{aligned}
&  \|\Theta(t) \|_{H^2}^2 +\int_0^t \|\Theta(s) \|_{H^3}^2 \ds+\|\Theta_t(t) \|_{H^1}^2+ \int_0^t \|\Theta_{t}(s) \|_{H^2}^2 \ds \\
\lesssim &\,  \|\Theta_0 \|_{H^3}^2+\|\nabla g(0) \|_{L^2}^2+ \int_0^t (\|\nabla g(s) \|^2_{L^2}+\| g_t(s) \|^2_{L^2}) \ds 
\end{aligned}
\end{equation}
where $0 \leq t \leq T$.\\

Going back to \eqref{g(0)_estimate}, \eqref{g_estimate_0}, \eqref{g_estimate} and noting that now we have $(p^\ast, \Theta^\ast)=(p, \Theta) \in B$, we can see that
\begin{equation} \label{est_theta_1}
\begin{aligned}
&  \|\Theta(t) \|_{H^2}^2 +\int_0^t \|\Theta(s) \|_{H^3}^2 \ds+\|\Theta_t(t) \|_{H^1}^2+ \int_0^t \|\Theta_{t}(s) \|_{H^2}^2 \ds \\
\leq &\, C_\Theta(R_1, R_2) \Big( \|\Theta_0 \|_{H^3}^2+\|p_1 \|_{H^2}^2+ \int_0^t \big(\|p(s) \|_{H^2}^2+\|p_t(s) \|_{H^2}^2+ \tau \|p_{tt}(s) \|_{H^1}^2\big) \ds \Big).
\end{aligned}
\end{equation}
Similarly, by using the estimates \eqref{estimate_f}, \eqref{gradient_f_est_0}, \eqref{gradient_f_est}, replacing $(p^\ast, \Theta^\ast)$ by $(p, \Theta)$ and keeping in mind that $(p, \Theta) \in B$, we obtain
\begin{equation} \label{est_p_1}
\begin{aligned}
& \|p(t) \|_{H^2}^2+\|p_t(t) \|_{H^2}^2+ \tau \|p_{tt}(t) \|_{H^1}^2 \\
\leq  &\, C_p(R_1, R_2) \Big( \|p_0 \|_{H^2}^2+\|p_1 \|_{H^2}^2+ \tau \|p_{2} \|_{H^1}^2\\
&+ \int_0^t \big(\|p(s) \|_{H^2}^2+\|p_t(s) \|_{H^2}^2+ \tau \|p_{tt}(s) \|_{H^1}^2\big) \ds \Big).
\end{aligned}
\end{equation}
Adding up \eqref{est_theta_1} and \eqref{est_p_1}, we get 
\begin{equation} \label{}
\begin{aligned}
& \|p(t) \|_{H^2}^2+\|p_t(t) \|_{H^2}^2+ \tau \|p_{tt}(t) \|_{H^1}^2+ \|\Theta(t) \|_{H^2}^2 +\int_0^t \|\Theta(s) \|_{H^3}^2 \ds\\
&+\|\Theta_t(t) \|_{H^1}^2+ \int_0^t \|\Theta_{t}(s) \|_{H^2}^2 \ds \\
\leq  &\, \tilde{C}(R_1, R_2) \Big( \|p_0 \|_{H^2}^2+\|p_1 \|_{H^2}^2+ \tau \|p_{2} \|_{H^1}^2+\|\Theta_0 \|_{H^3}^2\\
&+ \int_0^t \big(\|p(s) \|_{H^2}^2+\|p_t(s) \|_{H^2}^2+ \tau \|p_{tt}(s) \|_{H^1}^2\big) \ds \Big).
\end{aligned}
\end{equation}
Consequently, it suffices to apply Gronwall's inequality to the above bound to reach the estimate
\begin{equation} \label{}
\begin{aligned}
& \| p \|_{L^\infty H^2}^2+ \| p_t \|_{L^\infty H^2}^2 +\tau \| p_{tt} \|_{L^\infty H^1}^2+ \| \Theta \|_{L^2 H^3}^2\\
& +\| \Theta \|_{L^\infty H^2}^2+ \| \Theta_t \|_{L^\infty H^1}^2+\| \Theta_t \|_{L^2 H^2}^2\\
\leq &\, \tilde{C}(R_1, R_2)\exp(T)\big(\|p_0 \|_{H^2}^2+\|p_1 \|_{H^2}^2+ \tau \|p_{2} \|_{H^1}^2+\|\Theta_0 \|_{H^3}^2\big).
\end{aligned}
\end{equation}
This then concludes the proof of Theorem \ref{local_Existence_Thoerem}.

 %\bibliography{references_JMGT_Pen.bib}{}
%\bibliographystyle{siam} 
  
\end{document}